\numberwithin{equation}{section}
\newtheorem{thm}{Theorem}[section]
\newtheorem{prop}[thm]{Proposition}
\newtheorem{lem}[thm]{Lemma}
\newtheorem{cor}[thm]{Corollary}
\theoremstyle{definition}
\newtheorem{rem}[thm]{Remark}
\newtheorem{rems}[thm]{Remarks}
\let\oldproofname=\proofname
\renewcommand{\proofname}{\rm\bf{\oldproofname}}
\newcommand{\N}{\mathbb{N}}
\newcommand{\R}{\mathbb{R}}
\renewcommand{\S}{\mathbb{S}}
\newcommand{\C}{\mathbb{C}}
\newcommand{\cB}{\mathcal{B}}
\newcommand{\cD}{\mathcal{D}}
\newcommand{\cE}{\mathcal{E}}
\newcommand{\cF}{\mathcal{F}}
\newcommand{\cK}{\mathcal{K}}
\newcommand{\cL}{\mathcal{L}}
\newcommand{\cM}{\mathcal{M}}
\newcommand{\cN}{\mathcal{N}}
\newcommand{\cO}{\mathcal{O}}
\renewcommand{\Re}{\mathop{\mathrm{Re}}}
\renewcommand{\Im}{\mathop{\mathrm{Im}}}
\renewcommand{\div}{\mathop{\mathrm{div}}}
\newcommand{\dd}{\,{\rm d}}
\newcommand{\D}{{\rm d}}
\newcommand{\TS}{\textstyle}
\newcommand{\QED}{\mbox{}\hfill$\Box$}
\renewcommand{\:}{\thinspace :}
\newcommand{\1}{\mathds{1}}
\newcommand{\loc}{{\rm loc}}
\newcommand{\disc}{{\rm disc}}
\newcommand{\ess}{{\rm ess}}
\newcommand{\Langle}{\bigl\langle}
\newcommand{\Rangle}{\bigr\rangle}
\renewcommand{\phi}{\varphi}
\newcommand{\EE}{E^{(2)}}
\newcommand{\WW}{W^{(2)}}
\begin{document}

\title{Asymptotic self-similarity in diffusion equations
 with nonconstant radial limits at infinity}

\author{
{\bf Thierry Gallay, Romain Joly, and Genevi\`eve Raugel}
}

\date{May 27, 2020}
\maketitle

\begin{abstract}
We study the long-time behavior of localized solutions to linear or
semilinear parabolic equations in the whole space $\R^n$, where
$n \ge 2$, assuming that the diffusion matrix depends on the space
variable $x$ and has a finite limit along any ray as $|x| \to \infty$.
Under suitable smallness conditions in the nonlinear case, we
prove convergence to a self-similar solution whose profile is entirely
determined by the asymptotic diffusion matrix. Examples are given
which show that the profile can be a rather general Gaussian-like
function, and that the approach to the self-similar solution can be
arbitrarily slow depending on the continuity and coercivity properties
of the asymptotic matrix. The proof of our results relies on appropriate
energy estimates for the diffusion equation in self-similar
variables. The new ingredient consists in estimating not only the
difference $w$ between the solution and the self-similar profile, but
also an antiderivative $W$ obtained by solving a linear elliptic
problem which involves $w$ as a source term. Hence, a good part of our
analysis is devoted to the study of linear elliptic equations
whose coefficients are homogeneous of degree zero.
\end{abstract}

\section{Introduction}\label{sec1}

We consider semilinear parabolic equations of the form
\begin{equation}\label{diffeq}
  \partial_t u(x,t) \,=\, \div \bigl(A(x)\nabla u(x,t)\bigr) + 
  N(u(x,t))\,, \qquad x \in \R^n\,,\quad t > 0\,,
\end{equation}
which describe the evolution of a scalar quantity $u(x,t) \in \R$ under
the action of inhomogeneous diffusion and nonlinear self-interaction.
We assume that the diffusion matrix $A(x)$ in \eqref{diffeq} is
symmetric, Lipschitz continuous as a function of $x \in \R^n$, and
satisfies the following uniform ellipticity condition\: there
exist positive constants $\lambda_1,\lambda_2$ such that
\begin{equation}\label{Aelliptic}
  \lambda_1 |\xi|^2 \,\le\, \bigl(A(x)\xi,\xi) 
  \,\le\, \lambda_2 |\xi|^2\,, \qquad \hbox{for all } 
  x\in \R^n \hbox{ and all }\xi \in \R^n\,,
\end{equation}
where $(\cdot,\cdot)$ denotes the Euclidean scalar product in
$\R^n$. As for the nonlinearity, we suppose that $N$ is globally
Lipschitz, that $N(0) = 0$, and that $N(u) = \cO(|u|^\sigma)$ as $u
\to 0$ for some $\sigma > 1+2/n$. Our goal is to investigate the long-time
behavior of all solutions of \eqref{diffeq} starting from sufficiently
small and localized initial data.

Even in the linear case where $N = 0$, it is necessary to make further
assumptions on the diffusion matrix $A(x)$ to obtain accurate results
on the long-time behavior of solutions of \eqref{diffeq}. In fact,
two classical situations are well understood\: the {\em asymptotically
  flat} case, and the {\em periodic} case. More precisely, if $A(x)$
converges to the identity matrix as $|x| \to \infty$, it is possible
to show that all solutions of the diffusion equation $\partial_t u =
\div(A(x)\nabla u)$ in $L^1(\R^n)$ behave asymptotically like the
solutions of the heat equation $\partial_t u = \Delta u$ with the same
initial data, see e.g. \cite{DZ1}. On the other hand, if $A(x)$ is a
periodic function of $x$ with respect to a lattice of $\R^n$, the
relevant asymptotic equation is $\partial_t u = \div(\bar A\,\nabla
u)$, where $\bar A \in \cM_n(\R)$ is a {\em homogenized matrix}
which is determined by solving an elliptic problem in a cell of
the lattice \cite{DZ2}. These results can be extended to a class of
semilinear equations as well \cite{DC,DZ1,DZ2}.

In this paper, we consider a different situation which is apparently
less studied in the literature\: we assume that the diffusion matrix
$A(x)$ has {\em radial limits} at infinity in all directions. This means
that, for all $x \in \R^n$, the following limit exists\:
\begin{equation}\label{Alimits}
  A_\infty(x) \,:=\, \lim_{r \to +\infty} A(rx)\,.
\end{equation}
It is clear that the limiting matrix $A_\infty(x)$ is symmetric,
homogeneous of degree zero with respect to $x \in \R^n$, and uniformly
elliptic in the sense of \eqref{Aelliptic}. We also suppose that
the restriction of $A_\infty$ to the unit sphere $\S^{n-1} \subset
\R^n$ is Lipschitz continuous, and that the limit in \eqref{Alimits}
is reached uniformly on $\S^{n-1}$ at some rate $\nu > 0$\:
\begin{equation}\label{Aunif}
  \sup_{x \in \R^n} |x|^\nu \,\|A(x) - A_\infty(x)\| \,<\, \infty\,.
\end{equation}

Following \cite{Wa,GR}, to investigate the long-time behavior of solutions
to \eqref{diffeq}, we introduce forward {\em self-similar variables} defined
by $y = x/\sqrt{1+t}$ and $\tau = \log(1+t)$. More precisely, we look for
solutions of \eqref{diffeq} in the form
\begin{equation}\label{self-similar}
  u(x,t) \,=\, \frac{1}{(1+t)^{n/2}}\, v\Bigl(\frac{x}{\sqrt{1+t}}\,,
  \,\log(1+t)\Bigr)\,, \qquad x \in \R^n\,, \quad t \ge 0\,.  
\end{equation}
Note that the change of variables \eqref{self-similar} reduces to
identity at initial time, so that $u(x,0) = v(x,0)$. The new function
$v(y,\tau)$ satisfies the rescaled equation
\begin{equation}\label{veq}
  \partial_\tau v \,=\, \div\Bigl(A\bigl(ye^{\tau/2}\bigr)\nabla v\Bigr)
  + \frac{1}{2}\,y\cdot\nabla v + \frac{n}{2}\,v + \cN(\tau,v)\,,
  \qquad y \in \R^n\,,\quad \tau > 0\,,
\end{equation}
where
\begin{equation}\label{cNdef}
  \cN(\tau,v) \,=\, e^{(1 + \frac{n}{2})\tau}\,N\bigl(e^{-n\tau/2}
  v\bigr)\,.
\end{equation}
Equation \eqref{veq} is non-autonomous, but has (at least formally)
a well-defined limit as $\tau \to +\infty$. Indeed, using \eqref{Alimits}
and the assumption that $N(u) = \cO(|u|^\sigma)$ as $u \to 0$ for some 
$\sigma > 1+2/n$, we arrive at the limiting equation
\begin{equation}\label{veqlimit}
  \partial_\tau v \,=\, \div\bigl(A_\infty(y)\nabla v\bigr)
  + \frac{1}{2}\,y\cdot\nabla v + \frac{n}{2}\,v\,, 
  \qquad y \in \R^n\,,\quad \tau > 0\,.
\end{equation}
In what follows we denote by $L$ the differential operator in the
right-hand side of \eqref{veqlimit}. 

Our main results show that, under appropriate assumptions, the
solutions of \eqref{veq} indeed converge to solutions of \eqref{veqlimit} 
as $\tau \to \infty$, so that the long-time
asymptotics are determined by the linear equation \eqref{veqlimit}. 
We first observe that the limiting equation has a unique 
steady state\:

\begin{prop}\label{phiprop}
There exists a unique solution $\phi \in H^1(\R^n) \cap L^1(\R^n)$ 
of the elliptic equation
\begin{equation}\label{phieq}
  L \phi(y) \,\equiv\, 
  \div \bigl(A_\infty(y)\nabla \phi(y)\bigr) + \frac12 \,y \cdot \nabla
  \,\phi(y) + \frac{n}{2}\,\phi(y) \,=\, 0\,, \qquad y \in \R^n\,,
\end{equation}
satisfying the normalization condition $\int_{\R^n} \phi(y) \dd y = 1$. 
Moreover $\phi$ is H\"older continuous, and there exists a constant
$C \ge 1$ such that 
\begin{equation}\label{phibounds}
  C^{-1}\,e^{-C|y|^2} \,\le\, \phi(y) \,\le\, C\,e^{-|y|^2/C}\,,
  \qquad \hbox{for all } y \in \R^n\,.
\end{equation}
\end{prop}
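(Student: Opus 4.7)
I would construct $\phi$ as the self-similar profile of the heat kernel for the ``frozen'' equation $\partial_t u = \div(A_\infty(y)\nabla u)$ on $\R^n$. Let $\Gamma(x,z,t)$ denote its fundamental solution. Since $A_\infty$ inherits the uniform ellipticity \eqref{Aelliptic} by passing to the limit in \eqref{Alimits}, and is bounded and measurable, the Aronson theorem yields two-sided Gaussian bounds
\begin{equation*}
  \frac{1}{C t^{n/2}}\,e^{-C|x-z|^2/t} \;\le\; \Gamma(x,z,t) \;\le\;
  \frac{C}{t^{n/2}}\,e^{-|x-z|^2/(Ct)}.
\end{equation*}
The zero-homogeneity of $A_\infty$ makes the equation invariant under the rescaling $u(x,t)\mapsto \lambda^n u(\lambda x,\lambda^2 t)$, which forces $\Gamma(x,0,t) = t^{-n/2}\Gamma(x/\sqrt{t},0,1)$. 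Setting $\phi(y):=\Gamma(y,0,1)$ and reading this scaling identity through the self-similar coordinates of \eqref{self-similar} gives $L\phi = 0$. The Aronson bounds are precisely \eqref{phibounds}; mass conservation for the divergence-form equation yields $\int\phi\,dy = 1$; and De Giorgi--Nash--Moser applied to $\div(A_\infty\nabla\phi + \frac12 y\phi)=0$ supplies Hölder continuity. The inclusion $\phi\in H^1(\R^n)$ then follows from a Caccioppoli estimate combined with the Gaussian upper bound in \eqref{phibounds}.

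For uniqueness, let $\tilde\phi$ be another admissible profile and set $w := \phi - \tilde\phi \in H^1\cap L^1$, so that $\int w = 0$ and $Lw = 0$ in $\cD'(\R^n)$. Since $\phi>0$ by the lower bound in \eqref{phibounds}, the ratio $u := w/\phi$ is well-defined. A short computation using $\div J = L\phi = 0$, where $J := A_\infty\nabla\phi + \frac12 y\phi$, gives
\begin{equation*}
  L(\phi u) \;=\; \div(\phi\,A_\infty\nabla u) + \nabla u \cdot J.
\end{equation*}
Pairing the identity $L(\phi u) = 0$ with $\chi_R^2 u$ for a smooth cutoff $\chi_R$ and integrating by parts, the $u^2$-term generated by $J$ is annulled by $\div J = 0$, leaving, after passing to the limit $R\to\infty$,
\begin{equation*}
  \int_{\R^n} \phi\,(A_\infty\nabla u,\nabla u)\,dy \;=\; 0.
\end{equation*}
Positivity of $\phi$ and uniform ellipticity of $A_\infty$ then force $u$ to be constant, and the normalization $\int w = 0$ gives $w \equiv 0$.

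The main obstacle is the passage $R\to\infty$ in the uniqueness argument: because the lower Gaussian bound in \eqref{phibounds} allows $1/\phi$ to grow like $e^{C|y|^2}$, the ratio $u = w/\phi$ need not decay at infinity, and the annular boundary integrals produced by the cutoff are not immediately controlled by $w\in H^1\cap L^1$ alone. To close this step I would first upgrade $w$ (and its gradient away from the origin) to have Gaussian upper bounds by viewing $w$ as a fixed point of the semigroup $e^{\tau L}$ and invoking the kernel estimates of $e^{\tau L}$ in the rescaled variables --- themselves a direct consequence of Aronson's bounds for $\partial_t u = \div(A_\infty \nabla u)$.
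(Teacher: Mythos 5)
Your existence construction is essentially the paper's: $\phi(y)=\Gamma(y,0,1)$, with $L\phi=0$ obtained by differentiating the scaling identity $\phi(x)=t^{n/2}\Gamma(x\sqrt t,0,t)$ coming from \eqref{Gammascale}, the bounds \eqref{phibounds} from Aronson's estimates \eqref{Gammabounds}, the normalization from \eqref{IntGamma}, and H\"older continuity from De Giorgi--Nash; your Caccioppoli route to $\phi\in H^1$ is a harmless variant of the paper's observation that $\phi=e^{-H/2}\Gamma(\cdot,0,1/2)\in D(H)$.

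The uniqueness argument, however, has a genuine gap, and it is precisely the step you flag. The ground-state identity $L(\phi u)=\div(\phi A_\infty\nabla u)+\nabla u\cdot J$ with $\div J=0$ is correct, but the cutoff limit $R\to\infty$ cannot be closed with the ingredients you propose. Your fix is to upgrade $w$ to Gaussian decay by regarding $w$ as a fixed point of $e^{\tau L}$; but (i) proving that a distributional solution $w\in H^1\cap L^1$ of $Lw=0$ is indeed invariant under the semigroup is itself the crux of the matter and needs an argument, and (ii) even granting a Gaussian upper bound $|w(y)|\le Ce^{-|y|^2/C}$, the ratio $u=w/\phi$ is only controlled by $e^{(C-1/C)|y|^2}$ because the constant in the upper bound for $w$ and the constant in the lower bound \eqref{phibounds} for $\phi$ do not match, so the annulus terms $\int u^2\chi_R\,\nabla\chi_R\cdot J\,\dd y$ and $\int\chi_R u\,\phi\,\nabla\chi_R\cdot A_\infty\nabla u\,\dd y$ are still not summable in general; moreover $J$ contains $\nabla\phi$, for which no pointwise Gaussian bound is available (only the weighted $L^2$ and $L^q$ bounds of Proposition~\ref{gradphi}; $\nabla\phi$ may even be singular at the origin, cf.\ Remark~\ref{phirem}). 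Note also that once you have semigroup invariance, the whole Caccioppoli machinery is superfluous: writing $w=e^{\tau L}w$ through the kernel, i.e.\ $w(y)=e^{n\tau/2}\int\Gamma(ye^{\tau/2},z,e^\tau-1)\,w(z)\,\dd z$, and letting $\tau\to+\infty$ (dominated convergence using $w\in L^1$, continuity of $\Gamma$, and \eqref{Gammascale}) gives $w=\bigl(\int w\bigr)\phi=0$ directly. This is exactly the paper's route in Lemma~\ref{philem2}: the candidate profile $\psi$ is propagated by the fundamental solution over the window $(t,1)$, the representation is shown to be $t$-independent, and letting $t\to0$ the rescaled data concentrate to a Dirac mass, forcing $\psi=\phi$. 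You should either carry out that representation argument honestly, or find a substitute for the mismatched-constants issue; as written the energy argument would fail.
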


\begin{rem}\label{idrem}
If we suppose that $A_\infty(y) = \1$ (the identity matrix), or more
generally that $A_\infty(y)y = y$ for all $y \in \R^n$, the ``principal
eigenfunction'' $\phi$ defined in Proposition~\ref{phiprop} is given
by the explicit formula $\phi(y) = (4\pi)^{-n/2}\,e^{-|y|^2/4}$. In
contrast, we show in Remark~\ref{principal} below that, if $B(y)$ 
is a symmetric matrix that is homogeneous of degree zero and uniformly
elliptic, the Gaussian-like function $\phi(y) = \exp\bigl(-\frac14
(B(y)y,y)\bigr)$ satisfies \eqref{phieq} for some appropriate choice
of the limiting matrix $A_\infty$, provided the oscillations of
$B(y)$ are not too rapid. This indicates that the profile $\phi$ given
by Proposition~\ref{phiprop} can be a pretty general function
satisfying the Gaussian bounds \eqref{phibounds}.
\end{rem}

We next consider solutions of \eqref{veq} in the weighted $L^2$ space 
\begin{equation}\label{L2mdef}
  L^2(m) \,=\, \Bigl\{v \in L^2_\loc(\R^n)\,\Big|\, \|v\|_{L^2(m)} < \infty
  \Bigr\}\,, \quad \|v\|_{L^2(m)}^2 \,=\,
  \int_{\R^n} (1+|y|^2)^m |v(y)|^2\dd y\,,
\end{equation}
which was used in a similar context in \cite{GWa}. The parameter $m \in \R$
specifies the behavior of the solutions at infinity. In particular, we 
observe that $L^2(m) \hookrightarrow L^1(\R^n)$ when $m > n/2$, as a
consequence of H\"older's inequality.

We are now ready to state our main result in the linear case where $\cN = 0$. 

\begin{thm}\label{main1} {\em (Asymptotics in the linear case)}\\
Assume that $n \ge 2$ and that the diffusion matrix $A(x)$ satisfies 
hypotheses \eqref{Aelliptic}--\eqref{Aunif}. For all $m > n/2$ and all initial
data $v_0 \in L^2(m)$, the rescaled equation \eqref{veq} with $\cN = 0$ has 
a unique global solution $v \in C^0([0,+\infty),L^2(m))$ such that 
$v(0) = v_0$. Moreover, for any $\mu$ satisfying
\begin{equation}\label{mudef1}
  0 \,<\, \mu \,<\, \frac12\,\min\Bigl(m - \frac{n}{2}\,,\,\nu\,,\,
  \beta\Bigr)\,,
\end{equation}
where $\nu > 0$ is as in \eqref{Aunif} and $\beta \in (0,1]$ is the
exponent in \eqref{Ggrad} below, there exists a positive constant
$C$ (independent of $v_0$) such that
\begin{equation}\label{mainconv1}
  \|v(\cdot,\tau) - \alpha \phi\|_{L^2(m)} \,\le\, C\,\|v_0\|_{L^2(m)}
  \,e^{-\mu \tau}\,, \qquad \hbox{for all } \tau \ge 0\,,
\end{equation}
where $\alpha = \int_{\R^n} v_0(y) \dd y$ and $\phi$ is given by 
Proposition~\ref{phiprop}. 
\end{thm}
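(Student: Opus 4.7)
My approach would follow the two-norm strategy the abstract alludes to, combining a classical weighted energy estimate with an ``antiderivative'' estimate that exploits the zero-mean property of the remainder.

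\emph{Step 1 (setup and source term).} Existence and uniqueness of $v \in C^0([0,+\infty),L^2(m))$ should follow from standard non-autonomous parabolic theory: the drift $\tfrac12 y\cdot\nabla + \tfrac{n}{2}$ generates a strongly continuous group on $L^2(m)$ (acting essentially by rescaled translations), and $\div(A(ye^{\tau/2})\nabla\cdot)$ is a uniformly elliptic perturbation by \eqref{Aelliptic}. Since $m > n/2$ we have $L^2(m) \hookrightarrow L^1(\R^n)$, and integrating the divergence-form equation yields conservation of total mass, so $\int_{\R^n} v(y,\tau)\dd y = \alpha$ for all $\tau \ge 0$. Setting $w(\tau) := v(\tau) - \alpha\phi$, one has $\int w(\tau)\dd y = 0$ and, using $L\phi = 0$,
\[
\partial_\tau w \,=\, \div\bigl(A(ye^{\tau/2})\nabla w\bigr) + \tfrac12 y\cdot\nabla w + \tfrac{n}{2}w + \alpha\,\div\bigl((A(ye^{\tau/2}) - A_\infty(y))\nabla\phi(y)\bigr)\,.
\]
By \eqref{Aunif} and the Gaussian decay of $\nabla\phi$ from Proposition~\ref{phiprop}, the source term is of size $e^{-\nu\tau/2}$ in a suitable weighted negative Sobolev norm.

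\emph{Step 2 (coupled energy for $w$ and an antiderivative $W$).} A direct weighted $L^2(m)$ energy estimate, obtained by multiplying by $(1+|y|^2)^m w$ and integrating, would, via the Fokker--Planck spectral theory of \cite{GWa}, yield decay at rate $m - n/2$ for the homogeneous equation. To absorb the forcing $\alpha R_\tau$ using the zero-mean condition $\int w = 0$, I would introduce an antiderivative $W$ defined by a linear elliptic problem whose source is $w$ and whose principal part involves the homogeneous-of-degree-zero matrix $A_\infty$. Solvability in an appropriate functional space requires precisely the zero-mean condition, and Green function estimates for this elliptic problem — supplied by the later analysis in the paper, hence the appearance of the exponent $\beta$ via \eqref{Ggrad} — are needed to control $W$ in a norm compatible with $L^2(m)$. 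One then writes down the parabolic equation satisfied by $W$ (obtained by applying the elliptic inverse to the equation for $w$) and runs a parallel energy estimate. The combination gives a differential inequality of Gronwall type for $\|w\|_{L^2(m)}^2 + \|W\|^2$ with linear dissipation at any rate $2\mu$ strictly below the threshold in \eqref{mudef1} and forcing of order $e^{-\nu\tau}\|v_0\|_{L^2(m)}^2$.

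\emph{Step 3 (conclusion and main obstacle).} Gronwall's lemma then delivers \eqref{mainconv1}. The three quantities in the minimum in \eqref{mudef1} correspond respectively to the spectral gap of the weighted Fokker--Planck operator (giving $m - n/2$), to the convergence rate of $A$ to $A_\infty$ in \eqref{Aunif} (giving $\nu$), and to the regularity of the elliptic inverse producing the antiderivative (giving $\beta$). The main obstacle, which I expect to be the technical core of the paper, is the construction and analysis of $W$: one needs a complete solvability theory for the linear elliptic equation $\div(A_\infty\nabla\cdot) + \ldots = w$ with homogeneous-of-degree-zero coefficients on the whole space, together with pointwise Green function estimates matching the weighted $L^2(m)$ framework. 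Once this elliptic machinery is in place, the parabolic estimates above are fairly standard energy computations.
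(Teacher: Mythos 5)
Your overall architecture does coincide with the paper's (decompose $v = \alpha\phi + w$, define the antiderivative through $-\div(A_\infty\nabla W) = w$, run coupled weighted energy estimates, conclude by Gr\"onwall), but two places where your sketch glosses over the difficulty are genuine gaps. First, the role of $W$ is misassigned. The forcing term $\alpha\,\div\bigl((A(ye^{\tau/2})-A_\infty(y))\nabla\phi\bigr)$ is handled directly from \eqref{Aunif} and the decay of $\nabla\phi$, and only produces the $\nu$-constraint; it is not what forces the introduction of $W$. Moreover there is no spectral theory in the spirit of \cite{GWa} available for a general homogeneous-of-degree-zero $A_\infty$, so you cannot claim that the homogeneous $w$-equation already decays at rate $m-\frac n2$: the weighted energy inequality for $w$ alone contains the indefinite lower-order term $\int(\delta+|y|^2)^{m-1}w^2\dd y$ (see \eqref{em4} and Remark~\ref{emrem}), which cannot be absorbed without using $\int w\dd y = 0$. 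The antiderivative is exactly the device that makes the zero-mean condition visible to an $L^2$ energy method, and the missing key step in your plan is the coupling mechanism: writing $w = -\div(A_\infty\nabla W)$ and integrating by parts gives an interpolation of the form $\|w\|_{L^2(m-1)}^2 \le \epsilon \int(\delta+|y|^2)^m|\nabla w|^2\dd y + C_\epsilon\int(\delta+|y|^2)^{m-2}|\nabla W|^2\dd y + \dots$ (cf.\ \eqref{interpol}), so that the dissipation terms of the two energies absorb the bad term. Without this step the asserted ``linear dissipation at rate $2\mu$'' for $\|w\|_{L^2(m)}^2 + \|W\|^2$ does not follow.

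Second, the single-antiderivative scheme cannot prove the theorem in the stated generality. The energy for $W$ in $L^2(m{-}2)$ generates its own lower-order term with coefficient $(m{-}2)\delta + C(m{-}2)^2$ (see \eqref{Em1}), which can be made non-positive only if $m \le 2$, and the zero-mean weighted elliptic estimate behind the construction of $W$ (Proposition~\ref{Kernelprop1}) requires $m < \frac n2 + \beta$. Hence your argument, as written, only covers $n = 2$ and $n = 3$, and for $n=3$ only rates $\mu < 1/4$, whereas the statement allows all $n \ge 2$, all $m > n/2$, and the full range \eqref{mudef1}. Closing this gap requires the hierarchy of iterated antiderivatives $W^{(\ell)} = K^\ell[w]$ with a combined functional $e_{m,\delta} + \sum_\ell \kappa_\ell E^{(\ell)}_{m-2\ell,\delta}$ (Section~\ref{ssec45}), and then a separate interpolation argument in the weight to remove the restriction $m < \frac n2+\beta$ while keeping the rate (Section~\ref{ssec46}); neither ingredient appears in your proposal.
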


\begin{rem}\label{original}
In terms of the original variables, the convergence result \eqref{mainconv1}
implies in particular that, in the linear case $N = 0$, the solution
$u(x,t)$ of \eqref{diffeq} with initial data $u_0 \in L^2(m)$ satisfies
\begin{equation}\label{originalconv}
  \int_{\R^n} \Bigl|u(x,t) - \frac{\alpha}{(1+t)^{n/2}}\,\phi\Bigl(
  \frac{x}{\sqrt{1+t}}\Bigr)\Bigr| \dd x \,=\, \cO(t^{-\mu})\,,
  \qquad \hbox{as} \quad t \to +\infty\,,
\end{equation}
where $\alpha =  \int_{\R^n} u_0(x) \dd x$. Using parabolic regularity,
it is possible to prove convergence in higher $L^p$ norms too, as
in \cite{DZ1}.  
\end{rem}

\begin{rem}\label{higher}
Theorem~\ref{main1} holds true in all space dimensions $n \ge 1$, but
the proof we propose only works for $n \ge 2$ and depends on $n$ in a
nontrivial way. In fact, as we shall see in Section~\ref{sec4} below,
the number of energy functionals we need increases with $n$, so that
our method becomes cumbersome in high dimensions. For simplicity we
concentrate on the most relevant cases $n = 2$ and $n = 3$, for which
we provide a complete proof, but we also give a pretty detailed sketch
of the argument when $4 \le n \le 7$, see Section~\ref{ssec45}. On the
other hand, the one-dimensional case, which is substantially simpler
for several reasons, is completely solved in our previous work
\cite{GR}, where damped hyperbolic equations are also considered. In
many respects, the present paper can be viewed as a (rather
nontrivial) extension of the method of \cite{GR} to higher dimensions.
\end{rem}

Before considering semilinear equations, we comment on the formula
\eqref{mudef1} for the convergence rate $\mu$, which is quite
instructive. We first recall that, for any measurable matrix $A(x)$
satisfying the ellipticity conditions \eqref{Aelliptic}, the solutions
of the linear equation $\partial_t u = \div \bigl(A\nabla u\bigr)$
with localized initial data satisfy $\|u(\cdot,t)\|_{L^\infty} =
\cO(t^{-n/2})$ as $t \to +\infty$, see for instance \cite{FS}. The
purpose of Theorem~\ref{main1} is to exhibit the leading-order term in
the asymptotic expansion of $u(x,t)$, and to estimate the rate $\mu$
at which the leading term is approached by the solutions. As can be
seen from the simple example of the heat equation, where $A = \1$, the
convergence rate $\mu$ depends on how fast the initial data decay as
$|x| \to \infty$. More precisely, it is known in that example that
Theorem~\ref{main1} holds for any $\mu \le 1/2$ such that $2\mu < m -
\frac{n}{2}$ \cite{GWa}. This result is sharp and the constraints on
$\mu$ are determined by the spectral properties of the differential
operator $L$ in \eqref{veqlimit}, considered as acting on the weighted
space $L^2(m)$. If $m > n/2$, so that $L^2(m) \hookrightarrow
L^1(\R^n)$, the origin $\lambda = 0$ is a simple isolated eigenvalue,
with Gaussian eigenfunction $\phi$ as in Remark~\ref{idrem}. The
convergence rate $\mu$ is determined by the {\em spectral gap} between
the origin and the rest of the spectrum of $L$, see Figure~\ref{fig1} in
Section~\ref{sec3}.

In more general situations, the convergence rate $\mu$ obviously
depends on how fast the limits in \eqref{Alimits} are reached. This
effect can be studied using the techniques of \cite{DZ1} if we assume
that $A(x) = \1 + B(x)$, where $\|B(x)\| = \cO(|x|^{-\nu})$ as $|x|
\to \infty$. In that case, the solutions of the linear equation
$\partial_t u = \div \bigl(A\nabla u\bigr)$ in $L^2(m)$ behave
asymptotically like the solutions of the heat equation $\partial_t u =
\Delta u$ with the same initial data, but the convergence rate in
\eqref{mainconv1} or \eqref{originalconv} is further constrained by
the relation $\mu \le \nu/2$, which appears to be sharp. As can be
expected, we thus have $\mu \to 0$ as $\nu \to 0$.

Finally, it is important to realize that the convergence rate $\mu$
also depends on the properties of the limiting matrix $A_\infty(x)$
itself, and cannot be arbitrarily large even if $A = A_\infty$ and
$m \gg n/2$. We have already seen that $\mu \le 1/2$ when $A_\infty = \1$,
due to the presence of an isolated eigenvalue $\lambda = -1/2$ in
the spectrum of $L$ if $m > 1 + n/2$, see Figure~\ref{fig1}. For
a more general matrix $A_\infty(x)$, the principal eigenvalue of 
the corresponding operator $L$ is fixed at the origin, as asserted
by Proposition~\ref{phiprop}, but the next eigenvalue can be
pretty arbitrary, and this determines the width of the spectral gap.
In Section~\ref{ssec32}, we study an instructive example for which
\begin{equation}\label{MSexample}
  A_\infty(x) \,=\, b\,\1 + (1-b) \frac{x \otimes x}{|x|^2}\,,
\end{equation}
where $b > 0$ is a free parameter. In that case, we can compute
explicitly all eigenvalues and eigenfunctions of the linear operator
$L$ in \eqref{veqlimit}, and we observe that the spectral gap shrinks
to zero as $b \to 0$, see Figure~\ref{fig2}.

The example \eqref{MSexample} is already considered in classical
papers by Meyers \cite{Me} and Serrin \cite{Se}, where uniqueness and
regularity properties are studied for the solutions of the linear
elliptic equation $-\div\bigl(A_\infty(x)\nabla u\bigr) = f$ in
$\R^n$. It turns out that this equation plays a crucial role in our
analysis because, as we shall see in Section~\ref{sec4}, the
convergence result \eqref{mainconv1} is obtained using energy
estimates not only for the difference $w = v - \alpha \phi$,
but also for the ``antiderivative'' $W$ defined by $-\div\bigl(
A_\infty(x)\nabla W\bigr) = w$. It is important to keep in mind that
the matrix $A_\infty(x)$, being homogeneous of degree zero, is not
smooth at the origin unless it is constant. So we do not expect that
the solutions of the elliptic equation above are smooth, even if $f$
is, but the celebrated De Giorgi-Nash theory asserts that all weak
solutions in $H^1_\loc(\R^n)$ are at least H\"older continuous with
exponent $\beta$, for some $\beta \in (0,1)$. This exponent is the
third quantity that appears in the formula \eqref{mudef1} for the
convergence rate.  Consequently, Theorem~\ref{main1} draws an original
connection between the regularity properties of the elliptic problem
and the long-time behavior of the solutions of the evolution equation.

To study the elliptic problem, we consider the associated Green function
$G(x,y)$, which is uniquely defined at least if $n \ge 3$. For the
reasons mentioned above, that function is H\"older continuous with
exponent $\beta$, but not more regular unless $A_\infty$ is constant.
However, using the assumption that $A_\infty$ is homogeneous of
degree zero and Lipschitz outside the origin, it is possible to
establish the following gradient estimate
\begin{equation}\label{Ggrad}
  |\nabla_x G(x,y)| \,\le\, C\biggl(\frac{1}{|x-y|^{n-1}} +  
  \frac{1}{|x|^{1-\beta}|x-y|^{n-2+\beta}}\biggr)\,, \qquad
  x \neq y\,, \quad x \neq 0\,,
\end{equation}
where the second term in the right-hand side describes the precise
nature of the singularity at the origin. As is well known, the Green
function of the Laplace operator satisfies \eqref{Ggrad} with $\beta =
1$, but for nonconstant homogeneous matrices $A_\infty(x)$ we have
$\beta < 1$ in general. Estimate \eqref{Ggrad} is apparently new and
plays an important role in our analysis of the elliptic problem, hence
in the proof of Theorem~\ref{main1}.

Although we only considered linear equations so far, the techniques we
use in the proof of Theorem~\ref{main1} are genuinely nonlinear, and
were originally developed to handle semilinear problems, see
\cite{Wa,GR}. To illustrate the scope of our method, we also treat the
full equation \eqref{diffeq} with a nonlinearity $N$ that is
``irrelevant'' for the long-time asymptotics of small and localized
solutions, according to the terminology introduced in \cite{BKL}. For
simplicity, we make here rather strong assumptions on $N$, which
could be relaxed at expense of using additional energy functionals
in the proof. We suppose that there exist two constants $C > 0$ and
$\sigma > 1 + 2/n$ such that
\begin{equation}\label{Nprop}
  \bigl|N(u)\bigr| \,\le\, C |u|^\sigma \quad \hbox{and}\quad
  \bigl|N(u) - N(\tilde u)\bigr| \,\le\, C |u-\tilde u|\,,
  \qquad \hbox{for all } u, \tilde u \in \R\,.
\end{equation}

Our second main result is the following\:

\begin{thm}\label{main2} {\em (Asymptotics in the semilinear case)}\\
Assume that $n \ge 2$ and the diffusion matrix $A(x)$ satisfies hypotheses 
\eqref{Aelliptic}--\eqref{Aunif}, and that conditions
\eqref{Nprop} are fulfilled by the nonlinearity $N$. Given any $m > n/2$,
there exist a positive constant $\epsilon_0$ such that, for all initial
data $v_0 \in L^2(m)$ with $\|v_0\|_{L^2(m)} \le \epsilon_0$, the rescaled
equation \eqref{veq} has a unique global solution $v \in C^0([0,+\infty),
L^2(m))$ such that $v(0) = v_0$. Moreover, there exists some $\alpha_*
\in \R$ and, for all $\mu$ satisfying
\begin{equation}\label{mudef2}
  0 \,<\, \mu \,<\, \frac12\,\min\Bigl(m - \frac{n}{2}\,,\,\nu\,,\,
  \beta\,,\,2\eta\Bigr)\,, \qquad\hbox{where}\quad \eta \,=\,
  \frac{n}{2}\bigl(\sigma - 1\bigr) - 1\,,
\end{equation}
there exists a positive constant $C$ (independent of $v_0$) such that 
\begin{equation}\label{mainconv2}
  \|v(\cdot,\tau) - \alpha_* \phi\|_{L^2(m)} \,\le\, C\,\|v_0\|_{L^2(m)}
  \,e^{-\mu \tau}\,, \qquad \hbox{for all } \tau \ge 0\,,
\end{equation}
where $\phi$ is given by Proposition~\ref{phiprop}. 
\end{thm}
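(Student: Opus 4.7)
The overall plan is to recycle the proof of Theorem~\ref{main1} by treating the nonlinearity as a small, exponentially decaying source term. The crucial quantitative observation is that, by the pointwise bound in \eqref{Nprop} and the definition \eqref{cNdef},
\begin{equation*}
|\cN(\tau,v)| \,\le\, C\,e^{-\eta\tau}\,|v|^\sigma,\qquad \eta \,=\, \tfrac{n}{2}(\sigma-1)-1 \,>\, 0,
\end{equation*}
so the nonlinear term in self-similar variables actually decays exponentially in $\tau$, provided $v$ remains bounded in a strong enough norm. Throughout, I work under a smallness assumption $\|v(\tau)\|_{L^2(m)} \le \epsilon$ that will be closed by a bootstrap.

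First I would establish local well-posedness of \eqref{veq} in $L^2(m)$ by a standard contraction-mapping argument, using the two-parameter evolution $S(\tau,s)$ for the linear part (whose construction is implicit in the proof of Theorem~\ref{main1}) together with the fact that, since $N$ is globally Lipschitz, $\cN(\tau,\cdot)$ is Lipschitz on $L^2(m)$ on every bounded interval. Next I introduce the decomposition
\begin{equation*}
v(y,\tau) \,=\, \alpha(\tau)\phi(y) + w(y,\tau),\qquad \alpha(\tau) \,=\, \int_{\R^n} v(y,\tau)\,\dd y,
\end{equation*}
so that $\int w(y,\tau)\dd y = 0$ by the normalization of $\phi$. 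Integrating \eqref{veq} over $\R^n$ and exploiting the identity $\int(\tfrac12 y\cdot\nabla v + \tfrac{n}{2} v)\dd y = 0$ gives $\alpha'(\tau) = \int\cN(\tau,v)\dd y$, while substituting $v = \alpha\phi + w$ into \eqref{veq} and using $L\phi = 0$ from Proposition~\ref{phiprop} produces
\begin{equation*}
\partial_\tau w \,=\, \div\bigl(A(ye^{\tau/2})\nabla w\bigr) + \tfrac12 y\cdot\nabla w + \tfrac{n}{2} w + R_1 + R_2,
\end{equation*}
where $R_1(\tau,y) = \alpha(\tau)\div\bigl((A(ye^{\tau/2})-A_\infty(y))\nabla\phi\bigr)$ and $R_2(\tau,y) = \cN(\tau,v)-\alpha'(\tau)\phi$, both of mean zero on $\R^n$.

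The third step is to control $R_1$ and $R_2$ in $L^2(m)$. Assumption \eqref{Aunif} combined with the Gaussian bounds \eqref{phibounds} and interior regularity for $\nabla\phi$ yields $\|R_1(\tau)\|_{L^2(m)} \le C|\alpha(\tau)|\,e^{-\nu\tau/2}$. For $R_2$, one needs a bound on $\|v\|_{L^\infty}$, obtained from parabolic regularization for the non-autonomous operator in \eqref{veq}: the ellipticity constants in \eqref{Aelliptic} are uniform in $\tau$, so the De~Giorgi--Nash--Moser machinery gives $L^2(m) \to L^\infty$ smoothing with $\tau$-independent constants after a unit time lag, whence $\|\cN(\tau,v)\|_{L^2(m)} \le Ce^{-\eta\tau}\|v\|_{L^\infty}^{\sigma-1}\|v\|_{L^2(m)}$. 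Feeding these bounds into the energy functionals constructed for the linear problem in Theorem~\ref{main1}---applied to the mean-zero function $w$, which sits in the spectral complement of $\phi$---produces $\|w(\tau)\|_{L^2(m)} \le Ce^{-\mu\tau}\|v_0\|_{L^2(m)}$ for any $\mu$ as in \eqref{mudef2}; the constraint $\mu<\eta$ in \eqref{mudef2} is precisely what is needed for the energy inequality to absorb the nonlinear source $R_2$ into its dissipative term.

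To conclude, since $|\alpha'(\tau)|\le Ce^{-\eta\tau}$ is integrable, the limit $\alpha_* := \alpha(0) + \int_0^\infty\int\cN(s,v)\,\dd y\,\dd s$ exists with $|\alpha(\tau)-\alpha_*| = O(e^{-\eta\tau})$, and the triangle inequality $\|v(\tau)-\alpha_*\phi\|_{L^2(m)} \le \|w(\tau)\|_{L^2(m)} + |\alpha(\tau)-\alpha_*|\,\|\phi\|_{L^2(m)}$ yields \eqref{mainconv2}. Global existence follows from the uniform a priori bound, and the smallness threshold $\epsilon_0$ is fixed at the moment the bootstrap is closed. The main technical obstacle I anticipate is the tight coupling between the $L^2(m)$-energy estimate for $w$ and the $L^\infty$-estimate needed to control $R_2$: both have to be propagated simultaneously and uniformly in $\tau$, and one must verify carefully that the parabolic smoothing constants for the family $\{A(\,\cdot\, e^{\tau/2})\}_\tau$ are indeed $\tau$-independent when measured through the polynomial weight $(1+|y|^2)^m$---which is plausible because all matrices in the family share the same ellipticity constants, but requires some care at the level of weighted Nash--Aronson kernel bounds.
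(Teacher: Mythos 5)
Your overall architecture coincides with the paper's: the decomposition $v=\alpha(\tau)\phi+w$ with $\alpha'(\tau)=\int\cN(\tau,v)\dd y$, the perturbation equation with remainders $r_1$ (from $A-A_\infty$) and $r_2=\cN(\tau,v)-\alpha'\phi$, reuse of the linear energy machinery (including the antiderivative $W=K[w]$), a two-regime bootstrap in time, and $\alpha_*$ obtained from the integrability of $\alpha'$. Where you genuinely diverge is in the estimate of the nonlinear remainder: you propose $\|\cN(\tau,v)\|_{L^2(m)}\le Ce^{-\eta\tau}\|v\|_{L^\infty}^{\sigma-1}\|v\|_{L^2(m)}$, which requires a uniform-in-$\tau$ bound on $\|v(\tau)\|_{L^\infty}$ obtained from De~Giorgi--Nash--Moser smoothing. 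That step is a genuine gap as written. First, the smoothing you invoke is for the linear non-autonomous operator, but $v$ solves the semilinear equation \eqref{veq}, so the $L^\infty$ bound is itself a decay statement ($\|u(t)\|_{L^\infty}=\cO(t^{-n/2})$ in the original variables) that must be proved by an additional Duhamel-type bootstrap coupled to the $L^2(m)$ one; it is not a free consequence of uniform ellipticity. Second, in self-similar variables the drift $\tfrac12 y\cdot\nabla$ has an unbounded coefficient, so the claim of $\tau$-independent $L^2(m)\to L^\infty$ smoothing constants has to be routed through the original variables and weighted Nash--Aronson bounds, none of which is carried out; you yourself flag this coupling as the main obstacle, but without it the closing of the bootstrap, and hence global existence and the rate $\mu<\eta$, is not established.

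For comparison, the paper never uses any $L^\infty$ information. It estimates the nonlinear terms purely within the weighted $L^2$ energy framework: Lemmas~\ref{Nlem1} and \ref{Nlem2} bound $\int\rho^{2m}|w|\,|\cN(\tau,\alpha\phi+w)|\dd y$ and $\int\rho^{m-n/2}|\cN|\dd y$ via the Gagliardo--Nirenberg interpolation \eqref{interpsigma} applied to $\omega=\rho^m w$, so that the superlinear pieces are absorbed by Young's inequality into the dissipation term $\cD_{m,\delta}(\tau)$ of \eqref{cDdef}, which is already present in the energy inequalities \eqref{NLe2}, \eqref{NLe4}; the $W$-equation remainder $K[r_2]$ is handled with Proposition~\ref{Kernelprop1} applied with $p=1$, $s=n/2$, which is why only an $L^1$-weighted bound on $\cN$ is needed there. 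If you wish to keep your route, you must either prove the uniform $L^\infty$ decay for the semilinear problem (an extra bootstrap layer with its own smallness argument), or replace the $L^\infty$ step by exactly this kind of interpolation between $\|w\|_{L^2(m)}$ and $\|\nabla w\|_{L^2(m)}$, which is what makes the paper's proof self-contained.
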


\begin{rem}\label{changes}
The integral of $u$ is not preserved under the nonlinear evolution
defined by \eqref{diffeq}, and this explains why there is no formula
for the asymptotic mass $\alpha_*$ in Theorem~\ref{main2}. However,
the proof shows that $\alpha_* = \int_{\R^n}v_0\dd y + 
\cO(\|v_0\|_{L^2(m)}^\sigma)$, where $\sigma$ is as in \eqref{Nprop}.
It is important to observe that the convergence rate $\mu$ in 
\eqref{mudef2} is also affected by the nonlinearity, through the value 
of the parameter $\sigma$. In particular $\mu$ converges to zero as $\sigma$ 
approaches from above the critical value $1+2/n$, and no convergence at 
all is expected if $\sigma \le 1 + 2/n$. 
\end{rem}

\begin{rem}\label{lowdim}
As in the linear case, our strategy to prove Theorem~\ref{main2} becomes
complicated in large space dimensions. For simplicity we provide a complete 
proof only if $n = 2$, or if $n = 3$ and $\mu < 1/4$. The other cases can be 
treated using the hierarchy of energy functionals introduced in 
Section~\ref{ssec45}. 
\end{rem}

\smallskip
The rest of this paper is organized as follows. In Section~\ref{sec2},
we study in some detail the elliptic equation $-\div\bigl(A_\infty
\nabla u\bigr) = f$ under the assumption that the matrix $A_\infty(x)$
is homogeneous of degree zero and uniformly elliptic. In particular,
we derive estimates for the associated Green function, and we apply
them to bound the solution $u$ in terms of the data $f$ in weighted
$L^2$ spaces. In this process we use a general result on integral
operators with homogeneous kernels, which is essentially due to
Karapetiants and Samko \cite{KS}. In Section~\ref{sec3}, we
investigate the spectral properties of the linear operator defined by
the right-hand side of \eqref{veqlimit}; in particular, we prove
Proposition~\ref{phiprop} and we establish a few additional properties
of the principal eigenfunction $\phi$. We also study in detail the
particular case where the matrix $A_\infty$ is given by
\eqref{MSexample}. Section~\ref{sec4} is devoted to the proof of
Theorem~\ref{main1}, using weighted energy estimates for the
perturbation $w = v - \alpha\phi$. As was already mentioned, the main
original idea is to introduce the ``antiderivative'' $W$, which is
defined as the solution of the elliptic equation $-\div\bigl(A_\infty
\nabla W \bigr) = w$. It turns out that weighted $L^2$ estimates for
both $W$ and $w$ are sufficient to establish the convergence result
 \eqref{mainconv1} if $n = 2$, or if $n = 3$ and $\mu < 1/4$, 
whereas additional energy functionals are needed in the other cases. 
The same strategy works in the nonlinear case too, under suitable
assumptions on the function $N$, and the details are worked out in
Section~\ref{sec5}. The final Section~\ref{sec6} is an appendix where
a few auxiliary results are collected for easy reference.

\medskip
\noindent{\bf Acknowledgements.}
This project started more than 15 years ago, but was left aside for a
long time due to other priorities. Paradoxically, the untimely demise
of Genevi\`eve Raugel in spring 2019 gave a new impetus to the
subject. The authors are indebted to Marius Paicu for his active
participation at the early stage of this project, and to Emmanuel Russ
for constant help on many technical questions.  All three authors were
supported by the project ISDEEC ANR-16-CE40-0013 of the French
Ministry of Higher Education, Research and Innovation.


\section{The diffusion operator with homogeneous 
coefficients}\label{sec2}

In this section, we study the elliptic operator $H$ on $L^2(\R^n)$
formally defined by 
\begin{equation}\label{Hdef}
  Hu \,=\, -\div\bigl(A_\infty(x)\nabla u\bigr)\,,
  \qquad u \in L^2(\R^n)\,,
\end{equation}
where the matrix-valued coefficient $A_\infty(x)$ satisfies the
following assumptions\:

\medskip\noindent
1) The $n \times n$ matrix $A_\infty(x)$ is symmetric for all $x \in \R^n$,
and the operator $H$ is uniformly elliptic\\ \null\hskip 14pt
in the sense of \eqref{Aelliptic}; 

\smallskip\noindent
2) The map $A_\infty : \R^n \to \cM_n(\R)$ is homogeneous of degree zero\:
$A_\infty(\lambda x) = A_\infty(x)$ for all $x\in\R^n$\\ \null\hskip 14pt
and all $\lambda > 0$;

\smallskip\noindent
3) The restriction of $A_\infty$ to the unit sphere $\S^{n-1}
\subset \R^n$ is a Lipschitz continuous function. 

\medskip

Elliptic operators of the form \eqref{Hdef} are of course well known,
and were extensively studied in the literature, see for instance
\cite{Da,GT}. For the reader's convenience we recall here a few basic
properties, paying special attention to the homogeneity assumption 2),
which will play an important role in our analysis. As a consequence of
homogeneity, the function $x \mapsto A_\infty(x)$ is necessarily
discontinuous at $x = 0$, unless it is identically constant. Moreover,
in view of 2) and 3), there exists a constant $C > 0$ such that
$\|A_\infty(x)\| \le C$ for all $x \in \R^n$ and
\begin{equation}\label{gradA}
  \|\nabla A_\infty(x)\| \,\le\, \frac{C}{|x|}\,, \qquad
  \hbox{for all } x \in \R^n \setminus \{0\}\,.
\end{equation}
  
\subsection{Definition and domain}\label{ssec21}

To give a rigorous definition of the operator $H$, the easiest way is
to consider the corresponding quadratic form and to use the classical
representation theorem, see e.g. \cite[Section~VI.2]{Ka}. Let 
$\cB$ be the bilinear form on $L^2(\R^n)$ defined by $D(\cB) = 
H^1(\R^n)$ and
\[
  \cB(u_1,u_2) \,=\, \int_{\R^n} \bigl(A_\infty(x)\nabla u_1(x),
  \nabla u_2(x)\bigr)\dd x\,, \qquad u_1, u_2 \in D(\cB)\,.
\]
Under our assumptions on the matrix $A_\infty(x)$, it is easily
verified that the form $\cB$ is symmetric, closed, and nonnegative.
Applying the representation theorem, we thus obtain\:

\begin{prop}\label{prop_domain}
There exists a (unique) nonnegative selfadjoint operator $H : D(H) \to
L^2(\R^n)$ such that $D(H) \subset D(\cB)$ and $\cB(u_1,u_2) = (Hu_1,u_2)$
for all $u_1 \in D(H)$ and all $u_2 \in D(\cB)$. In addition $D(H) =
\{u \in H^1(\R^n)\,|\, \div(A_\infty\nabla u) \in L^2(\R^n)\}$ where
the divergence is understood in the sense of distributions.
\end{prop}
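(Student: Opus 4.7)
The plan is to verify that the bilinear form $\cB$ satisfies the hypotheses of the representation theorem for symmetric sesquilinear forms (Kato, \cite[Theorem~VI.2.1]{Ka}) and then identify the domain of the associated operator by a distributional argument.

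First I would check that $\cB$, with $D(\cB) = H^1(\R^n)$, is densely defined, symmetric, nonnegative, and closed on $L^2(\R^n)$. Density is clear since $H^1(\R^n)$ contains $C_c^\infty(\R^n)$. Symmetry is immediate from the pointwise symmetry of $A_\infty(x)$. Nonnegativity and closedness both follow from the ellipticity condition \eqref{Aelliptic}, which gives
\begin{equation*}
  \lambda_1 \|\nabla u\|_{L^2}^2 \,\le\, \cB(u,u) \,\le\, \lambda_2 \|\nabla u\|_{L^2}^2\,,
  \qquad u \in H^1(\R^n)\,.
\end{equation*}
In particular, the form norm $(\|u\|_{L^2}^2 + \cB(u,u))^{1/2}$ is equivalent to the usual norm on $H^1(\R^n)$, and completeness of $H^1(\R^n)$ then forces $\cB$ to be closed. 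Kato's theorem produces a unique nonnegative selfadjoint operator $H$ on $L^2(\R^n)$ such that $D(H) \subset D(\cB)$ and $\cB(u_1,u_2) = (Hu_1,u_2)_{L^2}$ for all $u_1 \in D(H)$ and all $u_2 \in D(\cB)$.

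It remains to identify $D(H)$. If $u \in D(H)$, then testing the identity $\cB(u,\varphi) = (Hu,\varphi)_{L^2}$ against any $\varphi \in C_c^\infty(\R^n) \subset H^1(\R^n)$ shows that $-\div(A_\infty\nabla u) = Hu$ in $\cD'(\R^n)$, so in particular $\div(A_\infty\nabla u) \in L^2(\R^n)$. Conversely, if $u \in H^1(\R^n)$ satisfies $f := -\div(A_\infty\nabla u) \in L^2(\R^n)$, then $\cB(u,\varphi) = (f,\varphi)_{L^2}$ for every $\varphi \in C_c^\infty(\R^n)$; since both sides are continuous in $\varphi$ with respect to the $H^1(\R^n)$ norm (using boundedness of $A_\infty$ on the left, $f \in L^2$ and Cauchy--Schwarz on the right), and $C_c^\infty(\R^n)$ is dense in $H^1(\R^n)$, the identity extends to all $\varphi \in D(\cB)$. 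This is precisely the characterization that places $u \in D(H)$ with $Hu = f$, by the uniqueness part of Kato's theorem.

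The argument is essentially formal; there is no serious obstacle beyond checking that the ellipticity condition \eqref{Aelliptic} is strong enough to render the form closed. The only point requiring some care is the density-and-continuity extension in the last step, where one must match the $H^1$-continuity of the left-hand side with the $L^2$-continuity of the right-hand side — both being controlled by the $H^1$ norm is what allows the passage from $C_c^\infty$ to $H^1$. Note that nowhere in the argument does the homogeneity or Lipschitz regularity of $A_\infty$ on $\S^{n-1}$ play a role: only boundedness and ellipticity are used, so this proposition holds for any symmetric bounded measurable matrix field satisfying \eqref{Aelliptic}.
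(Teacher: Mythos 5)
Your proposal is correct and takes essentially the same route as the paper, which simply notes that $\cB$ is symmetric, closed and nonnegative under the ellipticity assumption and invokes the representation theorem from \cite[Section~VI.2]{Ka}. Your verification of closedness via the equivalence of the form norm with the $H^1$ norm, and your distributional identification of $D(H)$, just supply the details the paper leaves to the reader.
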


If $H$ has constant coefficients, namely if the matrix $A_\infty$ does
not depend on $x$, it is clear that $D(H) = H^2(\R^n)$. However, this
is not true in the general case, as can be seen from the example of
the Meyers-Serrin matrix \eqref{MSexample} where $D(H)$ contains
functions $u$ that are not $H^2$ in a neighborhood of the origin, see
Section~\ref{ssec32}. As a matter of fact, it does not seem obvious to
determine exactly the domain $D(H)$ under our assumptions on the
diffusion matrix $A_\infty$, but the following (elementary)
observations can nevertheless be made.

\begin{rems} (On the domain of $H$) \\[1mm]
{\bf 1.} Since $A_\infty$ is Lipschitz outside the origin, the elliptic 
regularity theory \cite[Section~8.4]{GT} asserts that $D(H) \subset H^1(\R^n)
\cap H^2(\R^n \setminus B_r)$ for any $r > 0$, where $B_r = \{x \in \R^n
\,|\, |x| \le r\}$. \\[1mm]
{\bf 2.} If $n \ge 3$, then $D(H) \supset H^2(\R^n)$. Indeed, if $u\in
H^2(\R^n)$, we have by Leibniz's rule
\[
  Hu \,=\, -\sum_{i,j=1}^n \Bigl(A_\infty(x)_{ij}\partial^2_{x_ix_j}u
  + \partial_{x_i}(A_\infty(x)_{ij})\partial_{x_j}u \Bigr)\,.
\]
The first term in the right-hand side obviously belongs to $L^2(\R^n)$,
and so does the second one due to estimate \eqref{gradA} and Hardy's
inequality
\begin{equation}\label{HardyL2}
  \Bigl\|\frac {v}{|x|}\Bigr\|_{L^2(\R^n)} \,\le\, \frac{2}{n-2}
  \,\|\nabla v\|_{L^2(\R^n)}\,, \qquad v \in H^1(\R^n)\,, \quad
  n \ge 3\,,
\end{equation}
see e.g. \cite[Section~2.1]{RS}. Thus $Hu \in L^2(\R^n)$, hence $u \in
D(H)$. \\[1mm]
{\bf 3.} If $n \ge 3$ and $A_\infty(x) = \1 + \epsilon B(x)$, where
$B$ is homogeneous of degree zero and Lipschitz continuous on the
sphere $\S^{n-1}$, then $D(H) = H^2(\R^n)$ for all sufficiently
small $\epsilon \in \R$. Indeed, in that case, the argument above shows
that $H$ is a small perturbation of $-\Delta$ in $\cL(H^2(\R^n),L^2(\R^n))$,
the space of bounded linear maps from $H^2(\R^n)$ into $L^2(\R^n)$.
Since ${\bf 1} -\Delta \in \cL(H^2(\R^n),L^2(\R^n))$ is invertible, the
same property remains true for ${\bf 1} + H$ if $\epsilon$ is sufficiently
small, and this implies that $D(H) = H^2(\R^n)$. 
\end{rems}

\subsection{Semigroup and fundamental solution}\label{ssec22}

We next consider the evolution equation $\partial_t u + Hu = 0$, 
namely the linear diffusion equation
\begin{equation}\label{uhom}
  \partial_t u(x,t) \,=\, \div\bigl(A_\infty(x)\nabla u(x,t)\bigr)\,,
  \qquad x \in \R^n\,,\quad t > 0\,,
\end{equation}
which is the analogue of \eqref{veqlimit} in the original variables. 
Since the operator $H$ is selfadjoint and nonnegative, it is well known 
that $-H$ generates an analytic semigroup $e^{-tH}$ in $L^2(\R^n)$ which 
satisfies the contraction property $\|e^{-tH}u\|_{L^2} \le \|u\|_{L^2}$ for 
all $t \ge 0$, see e.g. \cite[Chapter~1]{Pa}. In particular, the Cauchy
problem for equation \eqref{uhom} is well posed for all initial data $u_0 
\in L^2(\R^n)$, the solution being $u(t) = e^{-tH}u_0$ for all $t \ge 0$. 

On the other hand, using the fact that the matrix $A_\infty$ satisfies
the uniform ellipticity condition \eqref{Aelliptic}, one can show that
the semigroup generated by $-H$ is hypercontractive
\cite[Section~2]{Da}, which means that $e^{-Ht}$ is a bounded operator
from $L^2(\R^n)$ to $L^\infty(\R^n)$ for any $t > 0$, and also from
$L^1(\R^n)$ to $L^2(\R^n)$ by duality. By the semigroup property, it
follows that $e^{-Ht}$ is also a bounded operator from $L^1(\R^n)$ to
$L^\infty(\R^n)$, and this implies that there exists a unique integral
kernel $\Gamma(x,y,t)$ such that, for any $u \in L^1(\R^n)$ or
$L^2(\R^n)$, 
\begin{equation}\label{fundsol}
  \bigl(e^{-tH} u\bigr)(x) \,=\, \int_{\R^n} \Gamma(x,y,t) u(y)\dd y\,,
  \qquad x\in \R^n\,, \quad t > 0\,,
\end{equation}
see Remark~\ref{kernelrem} below. The kernel $\Gamma(x,y,t)$ is
usually called the {\em fundamental solution} of the
parabolic equation \eqref{uhom}.

From the pioneering work of De Giorgi \cite{DG} and Nash \cite{Na}, we
know that $\Gamma$ is a H\"older continuous function of its three
arguments, and the strong maximum principle \cite[Section~8.7]{GT}
implies that $\Gamma$ is strictly positive. The following additional
properties will be used later on\:

\medskip\noindent
a) Since $H$ is selfadjoint, we have $\Gamma(x,y,t) = 
\Gamma(y,x,t)$  for all $x,y \in \R^n$ and all $t > 0$. 

\smallskip\noindent
b) For all $x,y \in \R^n$ and all $t > 0$, the following identities
hold
\begin{equation}\label{IntGamma}
  \int_{\R^n}\Gamma(x,y,t)\dd x \,=\, \int_{\R^n}\Gamma(x,y,t)\dd y 
  \,=\, 1\,.
\end{equation}
c) There exists a constant $C > 1$ such that, for all $x,y \in \R^n$ 
and all $t > 0$, 
\begin{equation}\label{Gammabounds}
  \frac{1}{C t^{n/2}}\,e^{-C|x-y|^2/t} \,\le\, \Gamma(x,y,t) \,\le\, 
  \frac{C}{t^{n/2}}\,e^{-|x-y|^2/(Ct)}\,.
\end{equation}
Such Gaussian bounds were first established by Aronson \cite{Ar1,Ar2}, 
see also \cite[Chap.~3]{Da}. 

\smallskip\noindent
d) Since $A_\infty$ is homogeneous of degree zero, we have
\begin{equation}\label{Gammascale}
  \lambda^n\,\Gamma(\lambda x, \lambda y, \lambda^2 t) \,=\, 
  \Gamma(x,y,t)\,, 
\end{equation}
for all $x,y \in \R^n$ and all $t > 0$. 

\begin{rem}\label{kernelrem}
That an integral kernel can be associated to any bounded linear 
operator from $L^p(\Omega)$ to $L^q(\Omega)$ with $q > p$ is 
a ``classical'' result, which is however rather difficult 
to locate precisely in the literature. According to \cite{Za},
this result is due to Dunford in the particular case where 
$\Omega = [0,1]$, and to Buhvalov \cite{Bu} in more general 
situations.
\end{rem}

\subsection{The Green function in dimension $n\ge 3$}\label{ssec23}

We next consider the elliptic equation $Hu = f$, namely
\begin{equation}\label{uell}
  -\div\bigl(A_\infty(x)\nabla u(x)\bigr) \,=\, f(x)\,, 
  \qquad x \in \R^n\,,
\end{equation}
where $f : \R^n \to \R$ is given and $u : \R^n \to \R$ is the
unknown function. If $n \ge 3$ and $f$ is, for instance, a continuous
function with compact support, it is well known that equation
\eqref{uell} has a unique solution $u$ that vanishes at infinity.
In fact, uniqueness is a consequence of the maximum principle
for the uniformly elliptic operator $H$, see \cite[Chapter~3]{GT},
and existence follows from the integral representation
\begin{equation}\label{urep}
  u(x) \,=\, \int_{\R^n} G(x,y) f(y) \dd y\,, \qquad 
  x \in \R^n\,,
\end{equation}
where $G(x,y)$ is the {\em Green function} defined by 
\begin{equation}\label{Greendef}
  G(x,y) \,=\, \int_0^\infty \Gamma(x,y,t)\dd t \,>\, 0\,, \qquad
  \hbox{for all } x,y, \in \R^n\,,\quad x \neq y\,.
\end{equation}
The following elementary properties are direct consequences of the
corresponding assertions for the fundamental solution $\Gamma$\:

\medskip\noindent
a) The Green function $G$ is symmetric\: $G(x,y) = G(y,x)$
for all $x \neq y$.

\smallskip\noindent
b) There exists a constant $C > 1$ such that
\begin{equation}\label{Gbounds}
  \frac{C^{-1}}{|x-y|^{n-2}} \,\le\, G(x,y) \,\le\, 
  \frac{C}{|x-y|^{n-2}}\,, \qquad \hbox{for all } x \neq y\,.
\end{equation}

\smallskip\noindent
c) The Green function is homogeneous of degree $2-n$\: 
$\lambda^{n-2}\,G(\lambda x,\lambda y) = G(x,y)$ for all $x \neq y$
\\ \null\hskip 14pt and all $\lambda > 0$. 

\smallskip\noindent
d) For any $y \in \R^n$ and any test function $v \in C^\infty_c(\R^n)$,
we have
\begin{equation}\label{GDirac}
  \int_{\R^n} \bigl(A_\infty(x)\nabla_x G(x,y)\,,\nabla v(x)\bigr)\dd x
  \,=\, v(y)\,.
\end{equation}

\smallskip
The last property implies that $-\div_x\bigl(A_\infty(x)\nabla_x
G(x,y) \bigr) = \delta(x-y)$ in the sense of distributions, so that
$G(x,y)$ can be considered as the fundamental solution of the elliptic
equation \eqref{uell}. The main statement in this section is the
following proposition, which gives accurate H\"older and gradient
estimates for $G$ under our assumptions on the diffusion matrix $A_\infty$.

\begin{prop}\label{GHolder} Assume that $n \ge 3$, and let $G$ be the 
Green function associated with the elliptic problem \eqref{uell}, where 
the diffusion matrix is symmetric, uniformly elliptic, and homogeneous
of degree zero. There exist constants $C > 0$ and $\beta \in (0,1)$
such that
\begin{equation}\label{Gbd1}
  |G(x_1,y) - G(x_2,y)| \,\le\, C|x_1-x_2|^\beta\biggl(\frac{1}{
  |x_1-y|^{n-2+\beta}} +  \frac{1}{|x_2-y|^{n-2+\beta}}\biggr)\,,
\end{equation}
for all $x_1,x_2,y \in \R^n$ with $x_1 \neq y$ and $x_2 \neq y$. 
Moreover
\begin{equation}\label{Gbd2}
  |\nabla_x G(x,y)| \,\le\, C\biggl(\frac{1}{|x-y|^{n-1}} +  
  \frac{1}{|x|^{1-\beta}|x-y|^{n-2+\beta}}\biggr)\,,
\end{equation}
for all $x,y \in \R^n$ with $x \neq y$ and $x \neq 0$. 
\end{prop}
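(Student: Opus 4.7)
The plan is to establish \eqref{Gbd1} via the classical De~Giorgi--Nash--Moser interior Hölder estimate combined with the Gaussian bounds \eqref{Gbounds}, and then to derive \eqref{Gbd2} by applying an interior gradient estimate for divergence-form equations with Lipschitz coefficients on a ball chosen to stay away from both the origin and the pole $y$, using \eqref{Gbd1} to control the oscillation of $G(\cdot,y)$.

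For \eqref{Gbd1}, I would fix $y$ and note that $G(\cdot,y)$ is a weak solution of $Hu=0$ on any open set avoiding $y$. The De~Giorgi--Nash--Moser theorem applied to the operator $H$ produces an exponent $\beta \in (0,1)$, depending only on $n,\lambda_1,\lambda_2$, such that the interior estimate
\[
  |u(z_1)-u(z_2)| \,\le\, C\,\Bigl(\frac{|z_1-z_2|}{R}\Bigr)^\beta
  \|u\|_{L^\infty(B_R(x_0))}, \qquad z_1,z_2 \in B_{R/2}(x_0),
\]
holds for any weak solution in $B_R(x_0)$. I would then split into two regimes. If $|x_1-x_2| \ge \tfrac14\min(|x_1-y|,|x_2-y|)$, the bound follows directly from $|G(x_1,y)-G(x_2,y)| \le G(x_1,y)+G(x_2,y)$ and the upper bound in \eqref{Gbounds}. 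Otherwise, applying the Hölder estimate on $B_R(x_1)$ with $R=\tfrac12\min(|x_1-y|,|x_2-y|)$ (so that $y \notin B_R(x_1)$ and $x_2 \in B_{R/2}(x_1)$), and using that $|z-y| \ge R$ for $z \in B_R(x_1)$ to get $\sup_{B_R(x_1)}G(\cdot,y) \le C/R^{n-2}$ via \eqref{Gbounds}, yields the claimed bound.

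For \eqref{Gbd2}, the key tool is the following interior gradient estimate: for any weak solution $u$ of $\div(A\nabla u)=0$ in $B_r(x_0)$, with $A$ symmetric and satisfying \eqref{Aelliptic} and $M := r\|A\|_{C^{0,1}(B_r(x_0))} < \infty$, Schauder theory (applicable after rewriting the equation in the non-divergence form $A : D^2u = -(\div A)\cdot\nabla u$, see e.g. \cite[Chap.~8]{GT}) provides
\[
  |\nabla u(x_0)| \,\le\, \frac{C}{r}\,\sup_{B_r(x_0)}|u-u(x_0)|,
\]
with $C$ depending only on $n$, ellipticity, and $M$. Given $x \neq 0$ and $x \neq y$, I would choose $r = \tfrac14\min(|x|,|x-y|)$, so that $B_r(x)$ avoids both the origin and $y$; by \eqref{gradA} one has $\|A_\infty\|_{C^{0,1}(B_r(x))} \le C/|x|$, hence $r\|A_\infty\|_{C^{0,1}(B_r(x))}$ is uniformly bounded. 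Controlling the oscillation on $B_r(x)$ through \eqref{Gbd1}, and using that $|z-y| \ge |x-y|/2$ for $z \in B_r(x)$, gives
\[
  \sup_{z \in B_r(x)}|G(z,y)-G(x,y)| \,\le\, \frac{C\,r^\beta}{|x-y|^{n-2+\beta}}.
\]
Combining yields $|\nabla G(x,y)| \le C\,r^{\beta-1}/|x-y|^{n-2+\beta}$. When $|x|\ge|x-y|$ one has $r \sim |x-y|$, producing the first term of \eqref{Gbd2}; when $|x|\le|x-y|$ one has $r \sim |x|$, producing the second term.

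The main technical obstacle is the interior gradient estimate with constant depending only on the scale-invariant quantity $r\|A\|_{C^{0,1}(B_r)}$, which legitimizes the rescaling argument and exploits the homogeneity of $A_\infty$. The crucial conceptual point is the subtraction of the constant $G(x,y)$ before applying the gradient estimate: without this step one would only obtain $|\nabla G(x,y)| \le C/(|x||x-y|^{n-2})$ in the regime $|x|\ll|x-y|$, which is strictly weaker than \eqref{Gbd2}. Thus the improvement captured in the exponent $1-\beta$ precisely encodes how the lack of smoothness of $A_\infty$ at the origin propagates to the singularity of $\nabla_x G$ near $x=0$.
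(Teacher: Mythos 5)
Your proposal is correct and follows essentially the same route as the paper: for \eqref{Gbd2} it is the paper's argument (apply a scale-invariant interior gradient estimate for Lipschitz coefficients, Lemma~\ref{auxlem}, on a ball of radius comparable to $\min(|x|,|x-y|)$, after subtracting a constant and controlling the oscillation by \eqref{Gbd1}), with the inessential variation that you subtract $G(x,y)$ in both regimes while the paper does so only when $|x|\lesssim|x-y|$. The only other difference is cosmetic: for \eqref{Gbd1} you rederive the local H\"older bound from the De Giorgi--Nash interior estimate together with \eqref{Gbounds}, whereas the paper quotes the corresponding estimate from the literature (Gr\"uter--Widman, Hofmann--Kim) and handles the intermediate region the same way you do.
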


\begin{proof}
The H\"older estimate \eqref{Gbd1} is explicitly stated in
\cite[Theorem~1.9]{GWi}, but in that classical reference the elliptic
equation \eqref{uell} is considered in a bounded domain
$\Omega \subset \R^n$ with homogeneous Dirichlet conditions at the
boundary $\partial \Omega$.  The more recent work \cite{HK} studies
a class of strongly elliptic systems that includes the scalar equation
\eqref{uell}. In the whole space $\R^n$, the following estimate 
is stated in \cite[Section~3.6]{HK}\: there exist $C > 0$ and 
$0 < \beta < 1$ such that 
\begin{equation}\label{Gbd3}
  |G(x_1,y) - G(x_2,y)| \,\le\, C|x_1-x_2|^\beta |x_1-y|^{2-n-\beta}
  \,, \quad \hbox{if}\quad |x_1 - x_2| < |x_1 - y|/2\,.
\end{equation}
Exchanging the roles of $x_1$ and $x_2$, we deduce
\begin{equation}\label{Gbd4}
  |G(x_1,y) - G(x_2,y)| \,\le\, C|x_1-x_2|^\beta |x_2-y|^{2-n-\beta}
  \,, \quad \hbox{if}\quad |x_1 - x_2| < |x_2 - y|/2\,.
\end{equation}
In the intermediate region where $x_j \neq y$ and $|x_1 - x_2| \ge 
|x_j - y|/2$ for $j = 1,2$, we have by \eqref{Gbounds}
\[
  |G(x_j,y)| \,\le\, C|x_j-y|^{2-n} \,\le\, C|x_1-x_2|^\beta |x_j-y|^{2-n-\beta}\,,
  \qquad j = 1,2\,,
\]
hence 
\begin{equation}\label{Gbd5}
  |G(x_1,y) - G(x_2,y)| \,\le\, G(x_1,y) + G(x_2,y) \,\le\, 
  C \biggl(\frac{|x_1-x_2|^\beta}{|x_1-y|^{n-2+\beta}} + 
  \frac{|x_1-x_2|^\beta}{|x_2-y|^{n-2+\beta}}\biggr)\,.
\end{equation}
Combining \eqref{Gbd3}--\eqref{Gbd5}, we obtain \eqref{Gbd1} in all 
cases. 

We now prove the gradient estimate \eqref{Gbd2}, which takes into 
account the fact that the diffusion matrix in \eqref{uell} is 
homogeneous of degree zero. We use the following auxiliary result. 

\begin{lem}\label{auxlem} {\bf \cite{GWi}}
Assume that $u$ is a bounded solution of the elliptic equation 
$Hu = 0$ in the domain $\Omega = \{x \in \R^n\,|\, |x-x_0| < r\}$, 
where $x_0 \in \R^n$, $x_0 \neq 0$, and $0 < r \le |x_0|/2$. 
Then 
\begin{equation}\label{nablau}
  |\nabla u(x_0)| \,\le\, \frac{C}{r}\,\sup_{x \in \Omega}|u(x)|\,,
\end{equation}
where $C > 0$ depends only on $n$, on $\lambda_1, \lambda_2$ in
\eqref{Aelliptic}, and on the constant in \eqref{gradA}. 
\end{lem}

Estimate \eqref{nablau} follows immediately from Lemma~3.1 in 
\cite{GWi} and its proof, if we use the fact that the matrix 
$A_\infty(x)$ in \eqref{uell} satisfies the Lipschitz estimate
\[
   \|A_\infty(x) - A_\infty(y)\| \,\le\, \frac{C}{|x_0|}\,|x-y|\,,
   \qquad \hbox{for all } x,y \in \Omega\,.
\]
We now come back to the proof of estimate \eqref{Gbd2}. 
Fix $x_0 \in \R^n$, $x_0 \neq 0$, and take $y \in \R^n$, 
$y \neq x_0$. If $|x_0| \le |x_0 - y|/2$, we apply Lemma~\ref{auxlem}
with $r = |x_0|/2$ and $u(x) = G(x,y) - G(x_0,y)$. We know from 
\eqref{Gbd1} that $|u(x)| \le C |x-x_0|^\beta |x_0-y|^{2-n-\beta}$ for 
$x \in \Omega = B(x_0,r)$, and we deduce from \eqref{nablau} that 
\begin{equation}\label{Gbd6}
  |\nabla u(x_0)| \,=\, |\nabla G(x_0,y)| \,\le\,
  \frac{C}{|x_0|^{1-\beta} |x_0 - y|^{n-2+\beta}}\,.
\end{equation}
In the converse case where $|x_0| > |x_0 - y|/2$, we apply 
Lemma~\ref{auxlem} with $r = |x_0-y|/4$ and $u(x) = G(x,y)$.
As $|u(x)| \le C |x-y|^{2-n}$, we deduce from  \eqref{nablau} 
that 
\begin{equation}\label{Gbd7}
  |\nabla u(x_0)| \,=\, |\nabla G(x_0,y)| \,\le\,
  \frac{C}{|x_0 - y|^{n-1}}\,.
\end{equation}
Combining \eqref{Gbd6}, \eqref{Gbd7}, we obtain estimate 
\eqref{Gbd2} in all cases. The proof of Proposition~\ref{GHolder}
is now complete. 
\end{proof}

\subsection{The Green functions in dimension $n=2$}\label{ssec24}

In the two-dimensional case, the integral in \eqref{Greendef} does not
converge anymore, and it is no longer possible to solve the elliptic
problem \eqref{uell} using a positive Green function that decays to
zero at infinity. However, as is shown in the Appendix of \cite{KL},
see also \cite{DK,TKB}, it is still possible to define a Green function
$G(x,y)$ with the following properties\:

\medskip\noindent
i) $G$ is symmetric\: $G(x,y) = G(y,x)$ for all $x,y \in \R^2$
with $x \neq y$.

\smallskip\noindent
ii) $G$ is H\"older continuous for $x \neq y$, and there exists a constant
$C > 0$ such that
\begin{equation}\label{Gdef1}
  |G(x,y)| \,\le\, C\Bigl(1 + \bigl|\log |x-y|\bigr|
  \Bigr)\,, \qquad x \neq y\,.
\end{equation}

\noindent
iii) For any $f \in C^0_c(\R^2)$ such that $\int_{\R^2}f(y)\dd y = 0$,
the unique solution of the elliptic equation\\ \null\hskip 16pt
\eqref{uell} such that $u(x) \to 0$ as $|x| \to \infty$ is given by
\begin{equation}\label{Gdef2}
  u(x) \,=\, \int_{\R^2} G(x,y) f(y)\dd y\,, \qquad x \in \R^2\,.
\end{equation}
iv) Equality \eqref{GDirac} with $n = 2$ holds for all $y \in \R^2$ and all
test functions $v \in C^\infty_c(\R^2)$. 

\smallskip
The Green function with these properties is unique up to an additive
constant. In the particular case where $A_\infty = \1$, we have the
explicit expression $G(x,y) = -(2\pi)^{-1}\log |x-y|$. As is clear
from that example, the Green function is not homogeneous. However,
using the fact that $A_\infty(x)$ is homogeneous of degree zero, it is
easy to verify that, if $G(x,y)$ is a Green function, so is $G(\lambda
x,\lambda y)$ for any $\lambda > 0$. Thus $G(\lambda x,\lambda y) -
G(x,y)$ must be equal to a constant $c(\lambda)$, which depends
continuously only on $\lambda$. As $c(\lambda_1\lambda_2) =
c(\lambda_1) + c(\lambda_2)$ for all $\lambda_1,\lambda_2 > 0$ by
construction, we conclude that there exists a (positive) real number
$c_0$ such that
\begin{equation}\label{Gdef3}
  G(\lambda x,\lambda y) \,=\, G(x,y) \,+\, c_0 \log\frac{1}{\lambda}\,,
\end{equation}  
for all $x \neq y$ and all $\lambda > 0$.

The analogue of Proposition~\ref{GHolder} in the present case is\:

\begin{prop}\label{GHolder2d} Assume that $n = 2$, and let $G$ be a 
Green function associated with the elliptic problem \eqref{uell}, where 
the diffusion matrix is symmetric, uniformly elliptic, and homogeneous
of degree zero. There exist constants $C > 0$ and $\beta \in (0,1)$
such that estimates \eqref{Gbd1}, \eqref{Gbd2} hold with $n = 2$. 
\end{prop}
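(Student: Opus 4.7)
The plan is to adapt the proof of Proposition~\ref{GHolder} to dimension $n=2$, where the main obstacle is that the Green function grows logarithmically rather than decays. This is overcome by exploiting a special feature available only in 2D: \emph{differences} of $G$ are scale-invariant, since \eqref{Gdef3} immediately yields
\[
  G(\lambda x_1, \lambda y) - G(\lambda x_2, \lambda y) \,=\, G(x_1, y) - G(x_2, y)\,, \qquad \lambda > 0\,,
\]
so the additive constant $c_0 \log(1/\lambda)$ disappears. This scale invariance will play the role that the polynomial decay bound \eqref{Gbounds} plays in dimensions $n \ge 3$.

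To prove \eqref{Gbd1} with $n = 2$, fix $x_1,x_2,y$ with $x_1,x_2 \neq y$ and assume without loss of generality that $|x_1 - y| \le |x_2 - y|$. In the \emph{near regime} $|x_1 - x_2| \le |x_1 - y|/4$, I apply the interior H\"older estimate of De~Giorgi--Nash--Moser (valid in all dimensions) to the weak solution $G(\cdot,y)$ of $Hu = 0$ on the ball $B(x_1, R)$ with $R = |x_1 - y|/2$, obtaining
\[
  |G(x_1,y) - G(x_2,y)| \,\le\, C\Bigl(\frac{|x_1-x_2|}{R}\Bigr)^\beta \,\mathrm{osc}_{B(x_1,R)} G(\cdot,y)\,.
\]
The oscillation on the right is controlled by rescaling with $\lambda = 1/R$: since \eqref{Gdef3} implies that the oscillation of $G(\cdot,y)$ on $B(x_1,R)$ coincides with that of $G(\cdot,y/R)$ on $B(x_1/R, 1)$, and on that unit ball the distance to $y/R$ lies in $[1,3]$, the bound \eqref{Gdef1} yields a uniform constant. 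This gives \eqref{Gbd1} in the near regime.

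In the \emph{far regime} $|x_1 - x_2| > |x_1 - y|/4$, I set $\lambda = 1/|x_1-x_2|$ and use scale invariance of the difference. The rescaled points $\tilde x_j = \lambda x_j$, $\tilde y = \lambda y$ satisfy $|\tilde x_1 - \tilde x_2| = 1$ and $|\tilde x_j - \tilde y| \le 5$ for $j=1,2$ (using $|x_2 - y| \le 5|x_1-x_2|$). Bounding each term separately via \eqref{Gdef1} and the elementary inequality $1 + |\log r| \le C r^{-\beta}$ valid for $r \in (0,5]$, I obtain
\[
  |G(\tilde x_j, \tilde y)| \,\le\, C(1 + |\log|\tilde x_j - \tilde y||) \,\le\, \frac{C}{|\tilde x_j - \tilde y|^\beta}\,,
\]
and scaling back gives precisely the two terms in \eqref{Gbd1}.

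The gradient estimate \eqref{Gbd2} then follows as in the proof of Proposition~\ref{GHolder}: Lemma~\ref{auxlem} is valid for $n=2$ (its proof uses only \eqref{Aelliptic} and \eqref{gradA}, which hold here), and it is applied to $u(x) = G(x,y) - G(x_0,y)$ on a ball of radius $|x_0|/2$ when $|x_0| \le |x_0-y|/2$, and of radius $|x_0-y|/4$ otherwise. The H\"older estimate \eqref{Gbd1} just established bounds $|u|$ on these balls by $C r^\beta/|x_0-y|^\beta$, which leads to the two terms in \eqref{Gbd2}. The principal technical point is the treatment of the far regime, where scale invariance of differences replaces the polynomial pointwise bound on $G$ that is unavailable in two dimensions.
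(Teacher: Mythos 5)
Your proof is correct, and its overall architecture coincides with the paper's: both exploit that, by \eqref{Gdef3}, the difference $G(x_1,y)-G(x_2,y)$ is homogeneous of degree zero, both treat the far regime by rescaling to $|x_1-x_2|=1$ and invoking the logarithmic bound \eqref{Gdef1} together with the elementary inequality $1+|\log r|\le C r^{-\beta}$, and both deduce \eqref{Gbd2} from Lemma~\ref{auxlem} applied to $u = G(\cdot,y)-G(x_0,y)$ on balls of radius $|x_0|/2$ or $|x_0-y|/4$, exactly as in the proof of Proposition~\ref{GHolder}. The one place where you genuinely diverge is the near regime: the paper simply quotes the estimate $|G(x_1,y)-G(x_2,y)|\le C\,|x_1-x_2|^\beta\,|x_1-y|^{-\beta}$ for $|x_1-x_2|<|x_1-y|/2$ from the construction in \cite[Section~6]{TKB}, whereas you rederive it from the interior De~Giorgi--Nash--Moser H\"older estimate applied to $G(\cdot,y)$, which is a weak solution of $Hu=0$ on $B(x_1,|x_1-y|/2)$ (legitimate, since \eqref{GDirac} tested against functions supported away from $y$ gives exactly this), with the oscillation bounded by a universal constant thanks to the cancellation of the additive constant in \eqref{Gdef3} and the bound \eqref{Gdef1} at unit scale. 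Your route is more self-contained, using only the listed properties of $G$ plus the standard interior H\"older estimate, and it yields an exponent $\beta$ and constants depending only on the ellipticity bounds, just as the cited result does; the paper's choice buys brevity by delegating precisely this step to the published reference.
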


\begin{proof}
For a class of elliptic systems that include the scalar equation
\eqref{uell}, a Green function in the whole plane $\R^2$ is constructed
in \cite[Section~6]{TKB}, and is shown to satisfy the H\"older estimate
\[
  |G(x_1,y) - G(x_2,y)| \,\le\, C\,\frac{|x_1-x_2|^\beta}{|x_1-y|^\beta}
  \,, \quad \hbox{if}\quad |x_1 - x_2| < |x_1 - y|/2\,,
\]
which is the exact analogue of \eqref{Gbd3} when $n = 2$. Exchanging
the roles $x_1$ and $x_2$, we also have 
\[
  |G(x_1,y) - G(x_2,y)| \,\le\, C\,\frac{|x_1-x_2|^\beta}{|x_2-y|^\beta}
  \,, \quad \hbox{if}\quad |x_1 - x_2| < |x_2 - y|/2\,.
\]
In the intermediate region where $x_j \neq y$ and $|x_1 - x_2| \ge 
|x_j - y|/2$ for $j = 1,2$, we use the fact that the function
$(x_1,x_2,y) \mapsto G(x_1,y) - G(x_2,y)$ is homogeneous of degree zero,
as a consequence of \eqref{Gdef3}. We can thus assume that $|x_1 - x_2| = 1$,
and using \eqref{Gdef1} we easily find
\begin{equation}\label{GHolder2dest}
  |G(x_1,y) - G(x_2,y)| \,\le\, C \biggl(\frac{|x_1-x_2|^\beta}{|x_1-y|^\beta} + 
  \frac{|x_1-x_2|^\beta}{|x_2-y|^\beta}\biggr)\,,
\end{equation}
which completes the proof of \eqref{Gbd1} when $n = 2$. 

To establish the gradient estimate \eqref{Gbd2} for $n = 2$, we use
again Lemma~\ref{auxlem}, which is valid in all space
dimensions. Proceeding as in the proof of Proposition~\ref{GHolder}, we
fix $x_0 \in \R^2$, $x_0 \neq 0$, and take $y \in \R^2$, $y \neq
x_0$. If $|x_0| \le |x_0 - y|/2$, we apply Lemma~\ref{auxlem} with $r
= |x_0|/2$ and $u(x) = G(x,y) - G(x_0,y)$. From \eqref{GHolder2dest} we
know that $|u(x)| \le C |x-x_0|^\beta |x_0-y|^{-\beta}$ for $x \in \Omega =
B(x_0,r)$, and we deduce from \eqref{nablau} that
\[
  |\nabla u(x_0)| \,=\, |\nabla G(x_0,y)| \,\le\,
  \frac{C}{|x_0|^{1-\beta} |x_0 - y|^\beta}\,.
\]
In the converse case where $|x_0| > |x_0 - y|/2$, we apply 
Lemma~\ref{auxlem} with $r = |x_0-y|/4$ and again $u(x) = G(x,y) - G(x_0,y)$.
As $|u(x)| \le C$ by \eqref{GHolder2dest}, we deduce from \eqref{nablau} 
that 
\[
  |\nabla u(x_0)| \,=\, |\nabla G(x_0,y)| \,\le\,
  \frac{C}{|x_0 - y|}\,.
\]
This completes the proof of estimate \eqref{Gbd2} in the
two-dimensional case. 
\end{proof}

\subsection{Weighted estimates for the elliptic equation}\label{ssec25}

The aim of this section is to derive estimates on the integral
operator $K$ formally defined by
\begin{equation}\label{defK}
  K[f](x) \,=\, \int_{\R^n} G(x,y)f(y)\dd y\,, \qquad x \in \R^n\,,
\end{equation}
where $G$ is the Green function introduced in Section~\ref{ssec23}
or \ref{ssec24}. In the two-dimensional case, the Green function
is only defined up to an additive constant, but we always assume
that $f$ is integrable and $\int_{\R^2} f(y)\dd y = 0$, so that
there is no ambiguity in definition \eqref{defK}. 

If $n \ge 3$, we know from \eqref{Gbounds} that $G(x,y)
\le C|x-y|^{2-n}$ for all $x \neq y$. Using the classical
Hardy-Littlewood-Sobolev inequality \cite{LL}, we deduce the
useful estimate
\begin{equation}\label{KHLS}
  \bigl\|K[f]\bigr\|_{L^q(\R^n)} \,\le\, C\|f\|_{L^p(\R^n)}\,, \qquad
  \hbox{if}\quad 1 < p < \frac{n}{2} \quad \hbox{and}\quad  \frac1q \,=\,
  \frac 1p - \frac2n\,.
\end{equation}
However, the bound \eqref{KHLS} is not sufficient for our purposes,
first because the case $n = 2$ is excluded, and also because we need
estimates in the weighted spaces. These improved bounds will be obtained
using the following general result, which concerns integral operators
of the form
\begin{equation}\label{Kopdef}
  \cK[f](x) \,=\, \int_{\R^n} k(x,y) f(y)\dd y\,,
  \qquad x \in \R^n\,,
\end{equation}
where the integral kernel $k(x,y)$ satisfies the following assumptions\:

\medskip\noindent
1) The measurable function $k : \R^n \times \R^n \to \R$ is homogeneous
of degree $-d$, where $d \in (0,n]$\:
\begin{equation}\label{kprop1}
  k(\lambda x,\lambda y) \,=\, \lambda^{-d} \,k(x,y)\,, \qquad x,y \in \R^n\,,
  \quad \lambda > 0\,.
\end{equation}
2) The function $k$ is invariant under simultaneous rotations of both
arguments\:  
\begin{equation}\label{kprop2}
  k(Sx,Sy) \,=\, k(x,y)\,, \qquad x,y \in \R^n\,, \quad S \in SO(n)\,.
\end{equation}
3) There exists $p \in [1,+\infty]$ with $(n{-}d)p \le n$ such that,
for $x \in \S^{n-1} \subset \R^n$, 
\begin{equation}\label{kprop3}
  \kappa_1 \,:=\, \int_{\R^n} |k(x,y)|^{n/d} \,|y|^{-n^2/(dq)}\dd y
  \,<\, \infty\,, \qquad \hbox{where} \quad 1 + \frac1q \,=\, \frac1p +
  \frac{d}{n}\,.
\end{equation}
As a consequence of \eqref{kprop2}, the quantity $\kappa_1$ does not
depend on the choice of $x \in \S^{n-1}$. 

\begin{prop}\label{propkernel}
Assume that the integral kernel $k(x,y)$ satisfies assumptions
\eqref{kprop1}--\eqref{kprop3} above. Then the operator $\cK$ defined by
\eqref{Kopdef} is bounded from $L^p(\R^n)$ to $L^q(\R^n)$ and  
\begin{equation}\label{kprop4}
  \big\|\cK[f] \big\|_{L^q(\R^n)} \,\le\, \kappa_1^{d/n} \,\|f\|_{L^p(\R^n)}\,,
  \qquad \hbox{for all } f \in L^p(\R^n)\,.
\end{equation}
\end{prop}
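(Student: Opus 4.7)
The plan is to reduce $\cK$ to a convolution-type operator on the multiplicative group $(0,\infty)$ by exploiting the homogeneity and rotational invariance of the kernel $k$, and then to bound this convolution via Young's inequality on $\R$ (in the logarithmic radial variable) combined with Minkowski's integral inequality in the angular variables. This is in the spirit of the original argument of Karapetiants and Samko \cite{KS}.

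First, I would pass to polar coordinates $x = r\omega$, $y = s\sigma$ and then to the logarithmic variables $u = \log r$, $v = \log s$. After rescaling $\tilde\phi(v,\sigma) = e^{vn/p} f(e^v\sigma)$ and $\tilde\psi(u,\omega) = e^{un/q} \cK[f](e^u\omega)$, so that $\|\tilde\phi\|_{L^p(\R\times\S^{n-1})} = \|f\|_{L^p(\R^n)}$ and similarly for $\tilde\psi$, the homogeneity identity $k(r\omega,s\sigma) = r^{-d} k(\omega,(s/r)\sigma)$ combined with the crucial scaling relation $d = n/p' + n/q$ (which is the hypothesis $1+1/q = 1/p + d/n$ in disguise) causes all powers of $r$ to cancel, producing the representation
\[
  \tilde\psi(u,\omega) \,=\, \int_\R \int_{\S^{n-1}} K(\omega,v,\sigma)\,\tilde\phi(u+v,\sigma) \dd v \dd\sigma,
  \qquad K(\omega,v,\sigma) \,=\, e^{vn/p'} k(\omega,e^v\sigma).
\]

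Second, for each fixed $\omega,\sigma$ the resulting expression is a classical convolution in the $u$-variable, to which Young's inequality on $\R$ applies with exponents $n/d$ on $K$ and $p$ on $\tilde\phi$: Young's condition $1/p + d/n = 1 + 1/q$ is exactly the hypothesis. Since $d \in (0,n]$ forces $p \le q$, Minkowski's integral inequality can then be invoked to move the $\sigma$-integral outside the $L^q(\dd u)$ norm, and a subsequent Hölder inequality in $\sigma$ followed by an $L^q$ norm in $\omega$ produces a factor $\|\tilde\phi\|_{L^p(\R\times\S^{n-1})} = \|f\|_{L^p(\R^n)}$ on the one side, and on the other a factor controlled by $\int_\R\int_{\S^{n-1}}|K(\omega,v,\sigma)|^{n/d}\dd v\dd\sigma$. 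The change of variable $t = e^v$ identifies this last integral with $\kappa_1$ of \eqref{kprop3}, uniformly in $\omega$ thanks to the rotational invariance \eqref{kprop2}, and gathering exponents yields the bound $\kappa_1^{d/n}\|f\|_{L^p(\R^n)}$.

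The main obstacle is the precise orchestration of the Minkowski--Young--Hölder sequence so that the final constant comes out as exactly $\kappa_1^{d/n}$ without spurious factors of $|\S^{n-1}|$ or similar. The rotational invariance \eqref{kprop2} is essential here: it guarantees that the $\omega$-dependence of the angular $L^{n/d}$-integrals drops out, so that the single scalar $\kappa_1$ defined on a reference sphere captures the full $L^{n/d}$-size of the rescaled kernel $K$. One also has to choose the order of inequalities carefully, since applying Hölder before Minkowski would mix $L^p$ and $L^q$ norms of $\tilde\phi$ in an incompatible way; the correct order is dictated by the inequality $p \le q$ coming from $d \le n$.
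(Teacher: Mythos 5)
Your radial reduction is correct: in the logarithmic variable the powers of $r$ cancel precisely because the hypothesis $1+1/q=1/p+d/n$ means $d=n/p'+n/q$, and the substitution $t=e^v$ does identify $\int_\R\int_{\S^{n-1}}|K(\omega,v,\sigma)|^{n/d}\dd v\dd\sigma$ with $\kappa_1$, uniformly in $\omega$ by \eqref{kprop2}. The gap is in the angular step. After Young's inequality in $u$ (for fixed $\omega,\sigma$) and Minkowski's inequality, you are left with $\|\tilde\psi(\cdot,\omega)\|_{L^q(\dd u)}\le\int_{\S^{n-1}}A(\omega,\sigma)B(\sigma)\dd\sigma$, where $A(\omega,\sigma)=\|K(\omega,\cdot,\sigma)\|_{L^{n/d}(\dd v)}$ and $B(\sigma)=\|\tilde\phi(\cdot,\sigma)\|_{L^p(\dd u)}$. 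What still has to be proved is that the integral operator on $\S^{n-1}$ with the rotation-invariant (zonal) kernel $A$ maps $L^p(\S^{n-1})$ into $L^q(\S^{n-1})$ with norm at most $\|A(\omega,\cdot)\|_{L^{n/d}(\S^{n-1})}=\kappa_1^{d/n}$, i.e.\ a Young inequality on the sphere with the same exponent triple $(p,n/d,q)$. A single H\"older inequality in $\sigma$ does not give this: H\"older pairs $A\in L^{n/d}$ with $B\in L^{(n/d)'}$, and since assumption \eqref{kprop3} imposes $(n-d)p\le n$, i.e.\ $p\le (n/d)'=n/(n-d)$, the norm $\|B\|_{L^{(n/d)'}(\S^{n-1})}$ is \emph{not} controlled by $\|B\|_{L^p(\S^{n-1})}$ on the finite measure space $\S^{n-1}$ (the embedding goes the other way); the two exponents coincide only in the endpoint case $q=\infty$. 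Reversing the order (Minkowski in $\sigma$ first, then H\"older) fares no better: it produces $\|A(\cdot,\sigma)\|_{L^q(\S^{n-1})}$ with $q\ge n/d$, again not controlled by $\kappa_1^{d/n}$, plus a spurious factor $|\S^{n-1}|^{1/p'}$. So the core difficulty has been relocated to the sphere, not removed.

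To close the argument you would have to prove that spherical Young inequality itself, e.g.\ by lifting the zonal kernel to a convolution on the compact group $SO(n)$ and checking that all normalization constants cancel, or by the same three-function H\"older/Schur splitting that the paper applies directly on $\R^n$: write $|k|=\bigl(|k|^{a}|y|^{-b}\bigr)\bigl(|k|^{1-a}|y|^{b}|f|^{p/q}\bigr)|f|^{1-p/q}$ with $a=n/(dp')$, $b=n^2/(dqp')$, apply the trilinear H\"older inequality with exponents $p',q,pq/(q-p)$, use the homogeneity to compute the two weighted kernel integrals, exchange the order of integration, and invoke Lemma~\ref{kappalem} to identify the resulting constant with $\kappa_1^{1/p'}\kappa_1^{1/q}=\kappa_1^{d/n}$ (one could also interpolate between the endpoints $p=1$ and $p=(n/d)'$, at the price of the sharp constant requiring extra care). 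In other words, your group-theoretic reformulation is legitimate and in the spirit of Karapetiants--Samko, but the step you label ``H\"older in $\sigma$'' is exactly where the generalized Young inequality must be proved. Only in the special case $d=n$ (so $p=q$ and $n/d=1$), which is the case most used in the paper, does the angular step reduce to an elementary symmetric Schur test; for general $d\in(0,n]$ the proof as written is incomplete.
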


\begin{rem}\label{history}
Proposition~\ref{propkernel} can be seen as a clever, but relatively
straightforward generalization of the classical Young inequality for
convolution operators. In the particular case where $d = n$, so that
$q = p$, the result is apparently due to L.~G.~Mikhailov,
N.~K.~Karapetiants, and S.~G.~Samko, see \cite[Section~6]{KS} and
\cite{LPSW}. For the reader's convenience, we give a proof of the
general case in Section~\ref{ssec61}.  As is explained in \cite{LPSW},
many classical inequalities, including Hilbert's inequality and
various forms of Hardy's inequality, can be deduced from
Proposition~\ref{propkernel} by an appropriate choice of the integral
kernel $k$. We add to this list the Stein-Weiss inequality \cite{Lie},
which corresponds to the kernel
\[
  k(x,y) \,=\, \frac{1}{|x|^a}\,\frac{1}{|x-y|^\lambda}\,\frac{1}{|y|^b}\,,
  \qquad x \neq y\,,
\]
where $0 < \lambda < n$, $d := a + b + \lambda \in [\lambda,n]$, and
$a < n/q$, $b < n(1-1/p)$ with $p,q$ as in \eqref{kprop3}. As is
easily verified, we can apply Proposition~\ref{propkernel} to that
example under the additional assumption that $a + b > 0$. In
particular the limiting case $a = b = 0$, which corresponds to
the classical HLS inequality, cannot be obtained in this way. 
\end{rem}

As a first application of Proposition~\ref{propkernel}, we establish
the following estimate for the linear operator \eqref{defK} in the
weighted spaces $L^2(m)$ defined in \eqref{L2mdef}.

\begin{prop}\label{Kernelprop0}
If $n\ge 3$ and if $m \ge 0$ satisfies $2 - n/2 < m < n/2$, the
operator $K$ defined by \eqref{defK} is bounded from $L^2(m)$ to
$L^2(m{-}2)$. Specifically, if $f \in L^2(m)$ and $u = K[f]$,
we have the homogeneous estimate
\begin{equation}\label{uweight0}
  \int_{\R^n} |x|^{2m-4}\,|u(x)|^2\dd x \,\le\, C
  \int_{\R^n}|x|^{2m}\,|f(x)|^2\dd x \,<\, \infty\,,
\end{equation}
for some constant $C > 0$ independent of $f$. 
\end{prop}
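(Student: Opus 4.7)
The plan is to derive the homogeneous estimate \eqref{uweight0} as a direct application of Proposition~\ref{propkernel}, and then recover the full $L^2(m) \to L^2(m-2)$ mapping statement by elementary weight comparison. First, the upper bound in \eqref{Gbounds} gives $0 < G(x,y) \le C|x-y|^{2-n}$, so that, for $f$ in a convenient dense subclass and $u = K[f]$, one has the pointwise control $|u(x)| \le C\int_{\R^n}|x-y|^{2-n}|f(y)|\dd y$.

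To recast this as the required weighted $L^2$ bound, I change unknowns by setting $v(x) := |x|^{m-2}u(x)$ and $g(y) := |y|^m f(y)$; then $\|v\|_{L^2(\R^n)}^2$ and $\|g\|_{L^2(\R^n)}^2$ are precisely the two sides of \eqref{uweight0}. Multiplying the pointwise bound by $|x|^{m-2}$ yields $|v(x)| \le C\int k_0(x,y)|g(y)|\dd y$ with
\[
  k_0(x,y) \,:=\, |x|^{m-2}\,|x-y|^{2-n}\,|y|^{-m}\,.
\]
This kernel is rotation invariant and homogeneous of degree $-n$, since $(m-2)+(2-n)+(-m) = -n$. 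I then invoke Proposition~\ref{propkernel} with $d = n$ and $p = q = 2$, for which the constant $\kappa_1$ in \eqref{kprop3} reduces, when $|x|=1$, to $\int_{\R^n}|x-y|^{2-n}|y|^{-m-n/2}\dd y$.

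Checking finiteness of this integral is the one real computation, and it determines the admissible range of $m$\: integrability near $y = 0$ forces $m < n/2$; the local Riesz singularity at $y = x$ is automatically integrable since $n-2 < n$; and integrability at infinity, where $|x-y|\asymp|y|$ and the integrand is of order $|y|^{2-3n/2-m}$, forces $m > 2 - n/2$. These are exactly the two hypotheses of the proposition, so Proposition~\ref{propkernel} yields $\|v\|_{L^2}\le\kappa_1\|g\|_{L^2}$, which is \eqref{uweight0}.

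The mapping statement $K:L^2(m)\to L^2(m-2)$ in the $(1+|y|^2)^m$-weighted norms then follows from elementary comparisons\: since $m\ge 0$, $|y|^{2m}\le(1+|y|^2)^m$ dominates the right-hand side by $\|f\|_{L^2(m)}^2$, while on $\{|x|\ge 1\}$ we have $(1+|x|^2)^{m-2}\le C|x|^{2m-4}$; the region $\{|x|\le 1\}$ is handled either by the inequality $|x|^{2m-4}\ge 1$ (when $m\le 2$) or, for $m > 2$, by combining \eqref{uweight0} with the Hardy-Littlewood-Sobolev bound \eqref{KHLS}. The only genuine obstacle is that the Green function is not itself rotation invariant, so Proposition~\ref{propkernel} cannot be applied to the raw kernel $|x|^{m-2}G(x,y)|y|^{-m}$; dominating $G$ by the Riesz kernel via \eqref{Gbounds} is precisely the step that reduces the problem to the abstract rotation-invariant setting.
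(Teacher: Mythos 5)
Your proposal is correct and follows essentially the same route as the paper: dominate $G$ by the Riesz kernel via \eqref{Gbounds}, apply Proposition~\ref{propkernel} with $d=n$, $p=q=2$ to the rotation-invariant homogeneous kernel $|x|^{m-2}|x-y|^{2-n}|y|^{-m}$, check that $\kappa_1<\infty$ exactly under $2-n/2<m<n/2$, and handle the unit ball in the case $m>2$ (hence $n\ge5$) with the HLS bound \eqref{KHLS}. The paper's proof is identical in substance, including the final weight comparisons.
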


\begin{proof}
If $f \in L^2(m)$ and $u = K[f]$ we have, in view of \eqref{defK} and
\eqref{Gbounds}, 
\begin{equation}\label{ueq0}
  |x|^{m-2} \,|u(x)| \,\le\, C \int_{\R^n} k(x,y)\, |y|^m |f(y)| \dd y\,,
  \quad \hbox{where}\quad  k(x,y) \,=\, \frac{|x|^{m-2}}{|x{-}y|^{n-2}|y|^m}\,.
\end{equation}
The integral kernel $k(x,y)$ in \eqref{ueq0} is homogeneous of degree
$-n$ and invariant under rotations, in the sense of \eqref{kprop2}.
Moreover, for any $x \in \S^{n-1} \subset \R^n$, we have
\begin{equation}\label{newkappa1}
  \kappa_1 \,=\, \int_{\R^n} k(x,y) \,|y|^{-n/2}\dd y \,<\, \infty\,.
\end{equation}
Indeed, the integral in \eqref{newkappa1} converges near the origin
because $m + n/2 < n$, and near infinity because $n-2 + m + n/2 >
n$. Moreover, the singularity at $y = x$ is always integrable. So,
applying Proposition~\ref{propkernel} with $d = n$ and $p = q = 2$, we
obtain the estimate \eqref{uweight0}. If $m \le 2$, this immediately
implies that $K$ is bounded from $L^2(m)$ to $L^2(m{-}2)$.  If $m >
2$, which is only possible when $n \ge 5$, it remains to bound the
$L^2$ norm of $u$ on the unit ball $B = B(0,1) \subset \R^n$, which is
not controlled by \eqref{uweight0} since $2m-4 > 0$. This is easily
done using the HLS inequality \eqref{KHLS}, which shows that
$\|u\|_{L^2(B)} \le C \|u\|_{L^{2n/(n-4)}(\R^n)} \le C\|f\|_{L^2(\R^n)}$.
\end{proof}

\begin{rem}\label{rightinverse}  
By a similar argument, using estimate \eqref{Gbd2}, one can
show that the function $u = K[f]$ in Proposition~\ref{Kernelprop0}
satisfies $\nabla u \in L^2(m{-}1)$ and $\nabla u(x) = \int \nabla_x
G(x,y)f(y)\dd y$. Thus, if multiply equality \eqref{GDirac} by
$f(y)$ and integrate over $y \in \R^n$, we obtain the relation
$\int \bigl(A_\infty(x)\nabla u(x),\nabla v(x) \bigr)\dd x = \int
v(x) f(x)\dd x$, which is valid for all $v \in C^\infty_c(\R^n)$.
This implies that $-\div(A_\infty\nabla u) = f$ in the sense of distributions
on $\R^n$, namely $HK[f] = f$ where $H$ is defined in \eqref{Hdef}. 
\end{rem}

The assumption that $m < n/2$ is essential in
Proposition~\ref{Kernelprop0}, even in the particular case where
$A_\infty = \1$. As we now show, it is possible to establish estimate
\eqref{uweight0} for larger values of $m$, if we assume that the
function $f \in L^2(m)$ has {\em zero mean}. At this point, we recall
that $L^2(m) \hookrightarrow L^1(\R^n)$ precisely when $m > n/2$. For
technical reasons that will become clear in the proof of
Theorem~\ref{main2}, we formulate our next result in the more
general framework of weighted $L^p$ spaces, with $p \in [1,2]$.
Those spaces are defined in close analogy with \eqref{L2mdef}\:
\begin{equation}\label{Lpmdef}
  L^p(m) \,=\, \Bigl\{f \in L^p_\loc(\R^n)\,\Big|\, \|f\|_{L^p(m)} < \infty
  \Bigr\}\,, \quad \|f\|_{L^p(m)}^p \,=\, \int_{\R^n} (1+|y|)^{mp}
  |v(y)|^p\dd y\,.
\end{equation}
If $m > n(1-\frac1p)$, we have $L^p(m) \hookrightarrow L^1(\R^n)$ by
H\"older's inequality, and in that case we denote by $L^p_0(m)$ the
closed subspace of $L^p(m)$ defined by
\begin{equation}\label{L20mdef}
  L^p_0(m) \,=\, \Bigl\{f \in L^p(m)\,\Big|\, \int_{\R^n} f(x)\dd x = 0
 \Bigr\}\,, \qquad m \,>\, n\bigl(1 - {\TS \frac1p}\bigr)\,.
\end{equation}

\begin{prop}\label{Kernelprop1}
Let $n\ge 2$ and let $\beta \in (0,1)$ be as in \eqref{Gbd1}. For any
$m \in (n/2,n/2+\beta)$ and any $p \in [1,2]$ such that $p > 2n/(n{+}4)$,
the operator $K$ defined by \eqref{defK} is bounded from $L^p_0(m{-}s)$ to
$L^2(m{-}2)$, where $s = n/p - n/2$. Specifically, if $f \in L^p_0(m{-}s)$
and $u = K[f]$, we have the homogeneous estimate
\begin{equation}\label{uweight}
  \int_{\R^n} |x|^{2m-4}\,|u(x)|^2\dd x \,\le\, C \left(\int_{\R^n}|x|^{p(m-s)}
  \,|f(x)|^p\dd x\right)^{2/p} \,<\, \infty\,,
\end{equation}
for some constant $C > 0$ independent of $f$. 
\end{prop}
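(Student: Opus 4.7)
The plan is to exploit the zero-mean condition on $f$ to enhance the decay of the integral kernel, and then invoke Proposition~\ref{propkernel}. Since $\int_{\R^n} f\dd y = 0$, we may rewrite
$$
u(x) \,=\, \int_{\R^n} \bigl[G(x,y) - G(x,0)\bigr]\,f(y)\dd y\,, \qquad x \neq 0\,,
$$
and this subtraction renders the integrand unambiguous in dimension two, despite the additive-constant indeterminacy of $G$ noted in Section~\ref{ssec24}. By the symmetry of $G$ and the H\"older estimate \eqref{Gbd1} from Propositions~\ref{GHolder} and~\ref{GHolder2d},
$$
|G(x,y) - G(x,0)| \,\le\, C |y|^\beta \biggl(\frac{1}{|x-y|^{n-2+\beta}} + \frac{1}{|x|^{n-2+\beta}}\biggr)\,,\qquad x \neq 0,\ y \neq x\,,
$$
and the new factor $|y|^\beta$ is the decisive gain that lets $m$ exceed the threshold $n/2$ of Proposition~\ref{Kernelprop0}.

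Multiplying by $|x|^{m-2}$ and setting $F(y) = (1+|y|)^{m-s}|f(y)|$, so that $\|F\|_{L^p}$ is equivalent to $\|f\|_{L^p(m-s)}$, we reduce matters to proving that the two integral operators with kernels
$$
\tilde k_1(x,y) \,=\, \frac{|x|^{m-2}}{|x-y|^{n-2+\beta}\,|y|^{m-s-\beta}}\,,\qquad \tilde k_2(x,y) \,=\, \frac{|y|^{s+\beta-m}}{|x|^{n-\beta-m}}\,,
$$
are bounded from $L^p(\R^n)$ to $L^2(\R^n)$. Both kernels are rotation-invariant and homogeneous of degree $s-n$; with $d = n - s$ and $q = 2$, the proportionality $1 + 1/q = 1/p + d/n$ required in Proposition~\ref{propkernel} is exactly the definition $s = n/p - n/2$, while $(n-d)p = sp \le n$ follows from $p \le 2$. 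It therefore suffices to verify the integrability condition $\kappa_1 < \infty$ from \eqref{kprop3} for each kernel.

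For $\tilde k_1$, an exponent count on $\int_{\R^n} |y|^{(\beta+s-m-n/2)n/d}|x_0-y|^{-(n-2+\beta)n/d}\dd y$ with $|x_0|=1$ shows that integrability near $y = 0$ is equivalent to $m < n/2+\beta$, integrability at infinity to $m > n/2$, and integrability near $y = x_0$ to a suitable upper bound on $s$, which is where the lower bound on $p$ enters. The second kernel $\tilde k_2$ is more delicate: it carries no $|x-y|^{-(n-2+\beta)}$ factor, so its $\kappa_1$ is not directly finite and the corresponding piece must instead be handled by splitting the representation of $u$ into the regions $\{|y| \le |x|/2\}$ and $\{|y| > |x|/2\}$. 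On the first region, the sharper one-sided form \eqref{Gbd4} of the H\"older estimate yields $|G(x,y) - G(x,0)| \le C|y|^\beta|x|^{-(n-2+\beta)}$ and a Cauchy--Schwarz estimate in $y$ suffices; on the second region one abandons the subtraction and bounds $G(x,y)$ and $G(x,0)$ separately via \eqref{Gbounds} or \eqref{Gdef1}, after which Proposition~\ref{propkernel} can be reapplied. The main technical obstacle is precisely this regional splitting, together with the exponent bookkeeping needed to extract the sharp ranges of $m$ and $p$; the two-dimensional case introduces no new difficulty, since Proposition~\ref{GHolder2d} provides a formally identical H\"older bound and the logarithmic scaling \eqref{Gdef3} is neutralized by the zero-mean subtraction.
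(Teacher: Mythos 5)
Your overall route is the same as the paper's (subtract $G(x,0)$ using the zero mean, bound the difference by the H\"older estimate \eqref{Gbd1}, split according to $|y|\lesssim|x|$ versus $|y|\gtrsim|x|$, and invoke Proposition~\ref{propkernel} for homogeneous kernels), but two of your steps fail as stated. First, the reduction to the un-truncated kernel $\tilde k_1$ is lossy near the diagonal: for $|x_0|=1$ the singularity $|x_0-y|^{-(n-2+\beta)n/d}$ is integrable only if $n-2+\beta<d=n-s$, i.e.\ $s<2-\beta$, which is strictly stronger than the hypothesis $s<2$ (equivalently $p>2n/(n+4)$) and excludes, for instance, the case $p=1$, $n=3$, $\beta>1/2$ that is needed later in the proof of Theorem~\ref{main2}. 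The paper avoids this by keeping the H\"older bound only on the region $|y|\le\frac34|x|$ (a smooth cutoff $\chi(|y|/|x|)$), so that $y$ stays away from $x$ there, while near $y\approx x$ (contained in $|y|\gtrsim|x|$) one abandons the subtraction and uses \eqref{Gbounds} or \eqref{Gdef1}, whose milder singularity $|x-y|^{2-n}$ gives exactly the condition $s<2$ — this part you do have.

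Second, and more seriously, your treatment of the region $\{|y|\le|x|/2\}$ by ``a Cauchy--Schwarz estimate in $y$'' does not close. With the one-sided bound $|G(x,y)-G(x,0)|\le C|y|^\beta|x|^{2-n-\beta}$, H\"older (or Cauchy--Schwarz) in $y$ gives
\begin{equation*}
  |x|^{m-2}\,\Bigl|\int_{|y|\le|x|/2}\bigl(G(x,y)-G(x,0)\bigr)f(y)\dd y\Bigr|
  \,\le\, C\,|x|^{m-n-\beta}\,|x|^{\beta-m+s+n/p'}\,\|f\|_{L^p(m-s)}
  \,=\, C\,|x|^{-n/2}\,\|f\|_{L^p(m-s)}\,,
\end{equation*}
since $s-n/p=-n/2$; the power $|x|^{-n/2}$ is exactly borderline and fails to be square integrable both at the origin and at infinity. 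This is not an artifact of a clumsy estimate: the map $f\mapsto$ (this piece of $u$) is scale covariant, so any pointwise bound uniform over the unit ball of $L^p(m-s)$ is necessarily homogeneous of degree $-n/2$ and never lies in $L^2$. The correct completion is to run this piece, too, through Proposition~\ref{propkernel}: the truncated kernel $|x|^{m-2-(n-2+\beta)}|y|^{\beta+s-m}\1_{\{|y|\le|x|/2\}}$ (together with the $|x-y|^{-(n-2+\beta)}$ term, harmless on that region) is homogeneous of degree $s-n$, rotation invariant, and its $\kappa_1$ is finite precisely when $n/2<m<n/2+\beta$ — this is the paper's kernel $k_1$. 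Finally, note that the homogeneous bound \eqref{uweight} alone does not give boundedness into $L^2(m-2)$ when $m>2$ (possible for $n=3$, $\beta>1/2$, or $n\ge4$): one must still estimate $\|u\|_{L^2}$ on the unit ball, which the paper does via the HLS inequality \eqref{KHLS}, or via \eqref{Gbounds} and H\"older when $p=1$, $n=3$; your proposal omits this step.
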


\begin{rem}\label{rem_p}
If $p = 2$, so that $s = 0$, estimate \eqref{uweight} reduces to
\eqref{uweight0}, and Proposition~\ref{Kernelprop1} thus shows that $K$ is
bounded from $L^2_0(m)$ to $L^2(m{-}2)$ if $n/2 < m < n/2 + \beta$. We
believe that the upper bound on $m$ is sharp.  In the particular case
were $A_\infty = \1$, so that $\beta = 1$, estimate \eqref{uweight} is
not valid for $m > n/2 + 1$ unless one assumes that not only the
integral but also the first order moments of $f$ vanish. In the proof
of Theorem~\ref{main2} below, where $n = 2$ or $3$,
Proposition~\ref{Kernelprop1} will also be used with $p = 1$ and
$s = n/2$. 
\end{rem}
  
\begin{rem}\label{rem_mrange}
If $n = 2$, or if $n = 3$ and $\beta \le 1/2$, we necessarily have $m <
2$ in Proposition~\ref{Kernelprop1}, so that $2m-4 < 0$. In that case,
if $f$ satisfies the assumptions of Proposition~\ref{Kernelprop1}, the
solution $u$ of the elliptic equation \eqref{uell} may not belong to
$L^2(\R^n)$, because $u(x)$ decays too slowly as $|x| \to
+\infty$. Explicit examples of this phenomenon can be constructed
using the Meyers-Serrin matrix \eqref{MSexample}, see Section~\ref{ssec63}.
\end{rem}
  
\begin{proof}
Our assumptions on the parameters $m$ and $p$ obviously imply that $s
\in [0,n/2]$, $s < 2$, and $m-s > n(1-\frac1p)$, so that $L^p(m{-}s)
\hookrightarrow L^1(\R^n)$. If $f \in L^p_0(m{-}s)$ and $u = K[f]$, we
thus have the representation formula
\[
  u(x) \,=\, \int_{\R^n} \Bigl(G(x,y)-G(x,0)\Bigr)f(y)\dd y\,,
  \qquad x \in \R^n\,,
\]
which is equivalent to \eqref{defK} since $\int_{\R^n} f(x)\dd x = 0$.
We recall that the above integral uniquely defines $u$ even if $n = 2$
because $G$ is unique up to a constant in that case. We also note
that, in any dimension $n\ge 2$, the difference $G(x,y)-G(x,0)$ is
homogeneous of degree $2-n$, see Sections \ref{ssec23} and
\ref{ssec24}.  The general idea is to bound that difference using
estimate \eqref{Gbd1} when $|y|$ is small compared to $|x|$, and
estimate \eqref{Gbounds} or \eqref{Gdef1} when $|y| \ge |x|/2$. We
thus introduce a smooth cut-off function $\chi : \R_+ \to [0,1]$
satisfying $\chi(r) = 1$ when $r \in [0,1/2]$ and $\chi(r) = 0$ when
$r \ge 3/4$. We observe that $|u(x)| \le u_1(x) + u_2(x)$ where
\begin{align*}
  u_1(x) \,&=\, \int_{\R^n} \Bigl|G(x,y)-G(x,0)\Bigr|\,\chi\Bigl({\TS
  \frac{|y|}{|x|}}\Bigr)|f(y)| \dd y\,, \\
  u_2(x) \,&=\, \int_{\R^n} \Bigl|G(x,y)-G(x,0)\Bigr|\,
  \Bigl(1 - \chi\Bigl({\TS \frac{|y|}{|x|}}\Bigr)\Bigr)|f(y)| \dd y\,.
\end{align*}
We shall prove that, for $j = 1,2$, the following estimate holds\:
\begin{equation}\label{ujeq}
  |x|^{m-2} \,u_j(x) \,\le\, C \int_{\R^n} k_j(x,y)\, |y|^{m-s} |f(y)| \dd y\,,
\end{equation}
where $k_j(x,y)$ is an integral kernel which fulfills the assumptions of
Proposition~\ref{propkernel} with $d = n - s$ and $p = 2n/(n{+}2s)$.
This will imply that both $u_1$ and $u_2$ satisfy estimate \eqref{uweight}
with $q = 2$, which gives the desired conclusion. 

We start with $u_1$. Using \eqref{Gbd1} to bound the difference
$G(x,y)-G(x,0) \equiv G(y,x)-G(0,x)$, we obtain estimate \eqref{ujeq}
for $j = 1$ where
\[
  k_1(x,y) \,=\, \frac{|x|^{m-2}}{|y|^{m-s}}\,\Bigl(\frac{|y|^\beta}{|x-y|^{n-2+\beta}}
  + \frac{|y|^\beta}{|x|^{n-2+\beta}}\Bigr)\chi\Bigl(\frac{|y|}{|x|}\Bigr)\,.
\]
The kernel $k_1(x,y)$ is obviously homogeneous of degree $-d = s-n$ and invariant
under rotations. Moreover, if $|x| = 1$, we have $\chi(|y|/|x|) =
\chi(|y|) = 0$ when $|y| \ge 3/4$, so that condition \eqref{kprop3} becomes
\[
  \int_{\R^n} k_1(x,y)^{n/d}\,|y|^{-n^2/(2d)} \dd y \,\equiv\, \int_{|y| \le 3/4}
  \Bigl(k_1(x,y)\,|y|^{-n/2}\Bigr)^{n/d} \dd y \,<\, \infty\,, \quad
  \hbox{when } |x|=1\,.
\]
The only singularity of the integrand is at the origin where $k_1(x,y)\,|y|^{-n/2}
\sim |y|^{\beta+s-m-n/2}$, and the assumption that $m < n/2 + \beta$ ensures that
$(n/d)\bigl(m + n/2 - \beta - s\bigr) < n$. So we can apply
Proposition~\ref{propkernel} and conclude that the function $u_1$
satisfies estimate \eqref{uweight} with $q = 2$. 

To estimate $u_2$ if $n\ge 3$, we use \eqref{Gbounds} and we obtain 
estimate \eqref{ujeq} for $j = 2$, where
\[
  k_2(x,y) \,=\,
  \frac{|x|^{m-2}}{|y|^{m-s}}\,\Bigl(\frac{1}{|x-y|^{n-2}}+\frac{1}{|x|^{n-2}}
  \Bigr)\,\Bigl(1 - \chi\Bigl({\TS \frac{|y|}{|x|}}\Bigr)\Bigr)\,,
  \qquad n \ge 3\,.
\]
If $n = 2$, the difference $G(x,y) - G(x,0)$ is homogeneous of degree
zero, and it follows that $G(x,y)-G(x,0)=G(x/|x|,y/|x|)-G(x/|x|,0)$. Using
\eqref{Gdef1}, we thus obtain estimate \eqref{ujeq} for $j = 2$, where
\[
  k_2(x,y) \,=\, \frac{|x|^{m-2}}{|y|^{m-s}}\,\biggl(1+\Bigl|\log \frac{|x-y|}{|x|}
  \Bigr|\biggr) \,\Bigl(1 - \chi\Bigl({\TS \frac{|y|}{|x|}}\Bigr)\Bigr)\,,
  \qquad n = 2\,.
\]
In any case, the kernel $k_2(x,y)$ is homogeneous of degree $-d = s-n$,
invariant under rotations, and if $|x| = 1$ we have
\[
  \int_{\R^n} k_2(x,y)^{n/d}\,|y|^{-n^2/(2d)} \dd y \,\equiv\,
  \int_{|y| \ge 1/2} \Bigl(k_2(x,y)\,|y|^{-n/2}\Bigr)^{n/d} \dd y \,<\, \infty\,.
\]
Indeed, the singularity at $y = x$ is integrable provided $(n/d)(n-2)
< n$, which is the case because we assumed that $s < 2$, and the
convergence of the integral at infinity is guaranteed since $m > n/2$.
Applying Proposition~\ref{propkernel} again, we conclude that $u_2$
also satisfies estimate \eqref{uweight} with $q = 2$. This completes
the proof of \eqref{uweight}.

It is now easy to conclude the proof of
Proposition~\ref{Kernelprop1}.  If $m \le 2$, estimate
\eqref{uweight} implies of course that $u \in L^2(m{-}2)$ and
$\|u\|_{L^2(m-2)} \le C\|f\|_{L^p(m-s)}$. If $m > 2$, which is
possible only when $n \ge 3$, it remains to bound the $L^2$ norm of
$u$ on the unit ball $B = B(0,1) \subset \R^n$. If $p > 1$, which is
automatic when $n \ge 4$, this follows from the HLS inequality
\eqref{KHLS}, which implies that $\|u\|_{L^q(\R^n)} \le
C\|f\|_{L^p(\R^n)}$ for $q = np/(n{-}2p) > 2$.  In the particular case
where $p = 1$ and $n = 3$, we can obtain the bound $\|u\|_{L^q(B)} \le
C\|f\|_{L^1(\R^n)}$ for all $q < 3$ using definition \eqref{defK},
estimate \eqref{Gbounds}, and H\"older's inequality. 
\end{proof}

We also need to estimate the function $u = K[f]$ in the particular case
where $f = \div g$ for some vector field $g : \R^n \to \R^n$. In that situation,
if we integrate by parts formally in \eqref{defK}, we obtain the relation
$u = (K\circ\div)[g]$, where the new operator $K\circ\div$ is defined by
\begin{equation}\label{Kcircdef}
  \bigl(K \circ \div\bigr)[g](x) \,=\,  -\int_{\R^n} \nabla_y G(x,y)\cdot
  g(y) \dd y\,, \qquad x \in \R^n\,.
\end{equation}
We first prove that this operator is well defined on $L^2(m{-}1)$ if $m > 2 - n/2$
and $m \ge 1$, and we next give conditions on $g$ that ensure that
$(K \circ \div)[g] = K[\div g]$. 

\begin{prop}\label{Kernelprop2}
Let $n\ge 2$ and let $\beta \in (0,1)$ be as in \eqref{Gbd1}. For
any $m \in (2-n/2,n/2+\beta)$ such that $m \ge 1$, the operator
$K\circ\div$ defined by \eqref{Kcircdef} is bounded from $L^2(m{-}1)^n$ to
$L^2(m{-}2)$. Specifically, if $g \in L^2(m{-}1)^n$ and $u = (K\circ\div)[g]$,
we have the homogeneous estimate
\begin{equation}\label{uweight2}
  \int_{\R^n} |x|^{2m-4}\,|u(x)|^2\dd x \,\le\, C \int_{\R^n}|x|^{2m-2}
  \,|g(x)|^2\dd x \,<\, \infty\,,
\end{equation}
for some constant $C > 0$ independent of $g$. 
\end{prop}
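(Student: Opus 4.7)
The plan is to follow the strategy of Propositions~\ref{Kernelprop0} and~\ref{Kernelprop1}: replace the integral kernel in~\eqref{Kcircdef} by a pointwise upper bound coming from the gradient estimate for $G$, split the result into two homogeneous pieces, and invoke Proposition~\ref{propkernel} on each.

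First, since the Green function is symmetric, the gradient $\nabla_y G(x,y)$ coincides with the gradient of $G(\cdot,x)$ evaluated at $y$, and Propositions~\ref{GHolder} and~\ref{GHolder2d} therefore yield
\[
  |\nabla_y G(x,y)| \,\le\, C \biggl(\frac{1}{|x-y|^{n-1}} +
  \frac{1}{|y|^{1-\beta}\,|x-y|^{n-2+\beta}}\biggr),
\]
valid in all dimensions $n \ge 2$ whenever $y \neq x$ and $y \neq 0$. Substituting into~\eqref{Kcircdef} and multiplying by $|x|^{m-2}$ yields the pointwise estimate
\[
  |x|^{m-2}\,|u(x)| \,\le\, C \int_{\R^n} \bigl(k_1(x,y) + k_2(x,y)\bigr)\,
  |y|^{m-1}\,|g(y)|\,\dd y,
\]
where
\[
  k_1(x,y) \,=\, \frac{|x|^{m-2}}{|x-y|^{n-1}\,|y|^{m-1}}, \qquad
  k_2(x,y) \,=\, \frac{|x|^{m-2}}{|y|^{m-\beta}\,|x-y|^{n-2+\beta}}.
\]

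Next, I check that each $k_j$ fulfills the hypotheses of Proposition~\ref{propkernel} with $d = n$ and $p = q = 2$. By construction both kernels are homogeneous of degree $-n$ and invariant under simultaneous rotations, so the only nontrivial point is the integrability condition $\kappa_1 = \int_{\R^n} k_j(x_0,y)\,|y|^{-n/2}\,\dd y < \infty$ for some fixed $x_0 \in \S^{n-1}$. For $k_1$, the integrand behaves like $|y|^{1-m-n/2}$ near the origin (integrable because $m < n/2 + \beta \le n/2 + 1$), like $|y|^{2-3n/2-m}$ at infinity (integrable because $m > 2 - n/2$), and has an integrable singularity of order $n-1 < n$ at $y = x_0$. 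For $k_2$, the corresponding behaviors are $|y|^{\beta-m-n/2}$ near the origin (integrable precisely thanks to $m < n/2 + \beta$), the same tail as $k_1$ at infinity, and a pole of order $n-2+\beta < n$ at $y = x_0$. Applying Proposition~\ref{propkernel} to each piece then delivers the homogeneous estimate~\eqref{uweight2}, whose right-hand side is finite because the assumption $m \ge 1$ guarantees $|y|^{2m-2} \le (1+|y|^2)^{m-1}$ and hence $\int_{\R^n} |y|^{2m-2}|g|^2\,\dd y \le \|g\|_{L^2(m-1)}^2$.

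Finally, to pass from~\eqref{uweight2} to the full $L^2(m-2)$ bound I split on the sign of $m - 2$. When $m \le 2$ one has $(1+|x|^2)^{m-2} \le |x|^{2m-4}$ on $\R^n$, so~\eqref{uweight2} directly yields $\|u\|_{L^2(m-2)} \le C\|g\|_{L^2(m-1)}$. When $m > 2$ (which forces $n \ge 3$) only the local bound $\|u\|_{L^2(B(0,1))}$ is missing; it follows from the leading singularity $|\nabla_y G(x,y)| \le C|x-y|^{1-n}$ together with the classical Hardy--Littlewood--Sobolev inequality, which gives $\|u\|_{L^{2n/(n-2)}(\R^n)} \le C\|g\|_{L^2(\R^n)} \le C\|g\|_{L^2(m-1)}$ and hence controls $\|u\|_{L^2(B(0,1))}$ by H\"older's inequality. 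The main technical obstacle is the careful bookkeeping for the integrability conditions on $k_1$ and $k_2$; the range $2 - n/2 < m < n/2 + \beta$ in the statement is precisely what makes all six of them hold simultaneously.
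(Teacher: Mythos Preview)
Your approach mirrors the paper's almost exactly: use the symmetry of $G$ to convert~\eqref{Gbd2} into a bound on $\nabla_y G$, majorize $|x|^{m-2}|u(x)|$ by an integral against a rotation-invariant kernel homogeneous of degree $-n$, and invoke Proposition~\ref{propkernel} with $p=q=2$. The paper keeps the two pieces of the gradient bound together in a single kernel $k$, whereas you split them into $k_1,k_2$; this is purely cosmetic, and your integrability checks are correct.

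There is one genuine slip in the final step. For $m>2$ you claim the local $L^2$ bound on $B(0,1)$ follows from ``$|\nabla_y G(x,y)|\le C|x-y|^{1-n}$'' and HLS, but that pointwise inequality is false: the second term $|y|^{\beta-1}|x-y|^{2-n-\beta}$ is not dominated by $|x-y|^{1-n}$ when $|y|$ is small compared to $|x-y|$, so HLS alone does not handle the contribution of $k_2$. The paper sidesteps this entirely by observing that whenever $m>2$ is in the admissible range one automatically has $2\in(2-n/2,\,n/2+\beta)$, and then simply reapplies the homogeneous estimate~\eqref{uweight2} already proved, at the special value $m'=2$, to obtain $\|u\|_{L^2(\R^n)}\le C\|g\|_{L^2(1)}\le C\|g\|_{L^2(m-1)}$. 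You could alternatively rescue your argument by treating the $k_2$ piece with the Stein--Weiss inequality (Proposition~\ref{propkernel} with $d=n-1$, $a=0$, $b=1-\beta$), but the paper's self-referential trick is cleaner.
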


\begin{proof}
Let $g \in L^2(m{-}1)^n$ and $u = (K\circ\div)[g]$. We estimate the integral
kernel $\nabla_y G(x,y)$ in \eqref{Kcircdef} using the bound \eqref{Gbd2} and
keeping in mind that $\nabla_y G(x,y) = \nabla_z G(z,x)\big|_{z=y}$ by symmetry.
This gives
\begin{equation}\label{uueq}
  |x|^{m-2} \,|u(x)| \,\le\, C \int_{\R^n} k(x,y)\, |y|^{m-1} |g(y)| \dd y\,,
\end{equation}
where
\[
  k(x,y) \,=\, \frac{|x|^{m-2}}{|y|^{m-1}}\,\Bigl(\frac{1}{|x-y|^{n-1}}
  + \frac{1}{|y|^{1-\beta}|x-y|^{n-2+\beta}}\Bigr)\,.
\]
The kernel $k$ is homogeneous of degree $-n$ and invariant under rotations.
To apply Proposition~\ref{propkernel} with $p = q = 2$, we need to verify
that, for any $x \in \S^{n-1} \subset \R^n$, 
\[
  \int_{\R^n} k(x,y)\,|y|^{-n/2} \dd y \,<\, \infty\,.
\]
The integral converges for small $|y|$ if and only if $m - \beta + n/2 < n$,
namely $m < n/2 + \beta$. At infinity, the integrability condition is
$m + n -2 + n/2 > n$, namely $m > 2 - n/2$. Thus, applying
Proposition~\ref{propkernel}, we deduce \eqref{uweight2} from
\eqref{uueq}. 

To show that $u \in L^2(m{-}2)$, it remains to control the $L^2$ norm
of $u$ when $m > 2$. In that case, we simply observe that $2 \in
(2-n/2,n/2+\beta)$, and applying the argument above (with $m = 2$) we
obtain the bound $\|u\|_{L^2(\R^n)} \le C \|g\|_{L^2(1)} \le C
\|g\|_{L^2(m-1)}$.  This concludes the proof.
\end{proof}

\begin{cor}\label{2Kcor}
If $g \in L^2(m{-}1)^n$ for some $m > n/2$ and if
$f = \div g \in L^2(m)$, then $f \in L^2_0(m)$ and $K[f] =
(K\circ\div)[g]$.
\end{cor}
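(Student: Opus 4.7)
The plan is to prove the two conclusions of the corollary separately: first that $\int_{\R^n} f\,dy = 0$, so that $f \in L^2_0(m)$, and then that $K[f] = (K\circ\div)[g]$ almost everywhere. Both parts rely on integration by parts combined with careful cutoff approximations at infinity and, for the second part, also near the singularity $y = x$ of the Green function.

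For the zero-mean claim, note that $m > n/2$ gives $f \in L^2(m) \hookrightarrow L^1(\R^n)$ by H\"older's inequality, so $\int f$ is well defined. I would take a smooth cutoff $\chi_R$ equal to $1$ on $B_R$ and supported in $B_{2R}$, with $|\nabla\chi_R| \le C/R$, and use honest integration by parts on the compactly supported vector field $\chi_R g$ to obtain $\int \chi_R f\,dy = -\int \nabla\chi_R\cdot g\,dy$. The left-hand side tends to $\int f$ by dominated convergence. The right-hand side, via weighted Cauchy--Schwarz on the annular shell $\{R \le |y| \le 2R\}$ where $\nabla\chi_R$ is supported, is bounded by $(C/R)\|g\|_{L^2(m-1)}R^{(n-2(m-1))/2} = C\|g\|_{L^2(m-1)}R^{n/2 - m}$, which vanishes since $m > n/2$. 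This forces $\int f = 0$.

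For the equality $K[f] = (K\circ\div)[g]$, I would fix $x_0 \neq 0$ in the full-measure set where both $K[f](x_0) = \int(G(x_0,y)-G(x_0,0))f(y)\,dy$ and $(K\circ\div)[g](x_0) = -\int \nabla_y G(x_0,y)\cdot g(y)\,dy$ converge absolutely; this set is of full measure by the kernel analysis of Propositions~\ref{Kernelprop1} and \ref{Kernelprop2} together with Fubini. Writing $h(y) = G(x_0,y) - G(x_0,0)$ and introducing an inner cutoff $\zeta_\epsilon$ that vanishes on $\{|y-x_0|\le\epsilon\}$ and equals $1$ outside $\{|y-x_0|\le 2\epsilon\}$, the product $h\chi_R\zeta_\epsilon$ is compactly supported in $\R^n \setminus \{x_0\}$, a region in which $h \in H^1_\loc$ with weak derivative $\nabla_y G(x_0,\cdot)$, so classical integration by parts gives
\begin{equation*}
\int h\chi_R\zeta_\epsilon f\,dy \,=\, -\int \chi_R\zeta_\epsilon\,\nabla h\cdot g\,dy \,-\, \int h\zeta_\epsilon\,\nabla\chi_R\cdot g\,dy \,-\, \int h\chi_R\,\nabla\zeta_\epsilon\cdot g\,dy.
\end{equation*}
I would then send $R\to\infty$ first: the middle term vanishes by the same shell estimate as in the first part (using that $|h|$ is bounded for $n \ge 3$, resp.\ of order $\log|y|$ for $n = 2$), while the other two converge by dominated convergence. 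Finally, letting $\epsilon \to 0$, the first two surviving terms reproduce $K[f](x_0)$ and $(K\circ\div)[g](x_0)$ by the absolute-convergence assumption on $x_0$; the remaining cutoff-error term is controlled via Cauchy--Schwarz by $\|h\nabla\zeta_\epsilon\|_{L^2}\,\|g\|_{L^2(\{|y-x_0|\le 2\epsilon\})}$. Using $|\nabla\zeta_\epsilon|\lesssim 1/\epsilon$ together with $|h(y)|\lesssim |x_0-y|^{2-n}$ for $n \ge 3$ (resp.\ $|\log|x_0-y||$ for $n = 2$), the first factor is of order $\epsilon^{(2-n)/2}$ (resp.\ $|\log\epsilon|$), while the second is of order $\epsilon^{n/2}|g(x_0)|$ at any Lebesgue point of $g$; the product tends to $0$.

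The main difficulty I anticipate is in identifying the correct full-measure set of ``good'' $x_0$, since the integrals defining $K[f]$ and $(K\circ\div)[g]$ are not absolutely convergent pointwise for every $x$, and the size of this set has to be extracted from the kernel estimates of Section~\ref{ssec25}. The zero-mean identity from the first part is also essential in dimension $n = 2$: it is what makes $K[f]$ unambiguous despite the Green function being defined there only up to an additive constant.
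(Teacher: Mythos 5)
Your proof is correct, but it takes a noticeably different route from the paper for the identity $K[f]=(K\circ\div)[g]$. The zero-mean step is essentially the paper's Lemma~\ref{intlem} (there obtained via the embedding $L^2(m{-}1)\hookrightarrow L^p$ with $(n{-}1)p<n$; your weighted Cauchy--Schwarz on the annulus is the same computation done directly). For the main identity, the paper keeps only the outer cutoff $\chi_k$, reads the resulting identity as an equality of functions of $x$, uses Propositions~\ref{Kernelprop1} and \ref{Kernelprop2} to pass to the limit of the left-hand side in $L^2(m{-}2)$, and kills the boundary term by duality: it tests against $\psi\in C^\infty_c$, introduces $\Psi(y)=\int G(x,y)\psi(x)\dd x$, and uses the decay $|\Psi(y)|\le C(1{+}|y|)^{2-n}$ (or $\log$ growth if $n=2$) together with $g\in L^2(m{-}1)$. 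You instead argue pointwise at almost every $x_0$, adding an inner cutoff $\zeta_\epsilon$ around the pole $y=x_0$ and a Lebesgue-point estimate for $g$ to control the inner boundary layer. What your route buys is an explicit treatment of the pole, where $\nabla_y G(x_0,\cdot)$ fails to be locally square integrable, a point the paper's starting identity (``holds for all $k$ and almost all $x$'') passes over silently; what the paper's route buys is that by working in $L^2(m{-}2)$ and in the sense of distributions it never needs absolute-convergence sets or Lebesgue points, and the limit of the left-hand side is immediate from the mapping properties already proved. Two small things to make your write-up airtight: when $m\ge n/2+\beta$ you should reduce to some $m'\in(n/2,n/2+\beta)$, $m'\le m$, before invoking Propositions~\ref{Kernelprop1}--\ref{Kernelprop2} for the a.e.\ absolute convergence (the paper has the same implicit restriction), and the Lebesgue points you need for $\|g\|_{L^2(B(x_0,2\epsilon))}=O(\epsilon^{n/2})$ are $L^2$-Lebesgue points of $g$, which is still an almost-every statement since $g\in L^2_\loc$.
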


\begin{proof}
As $m > n/2$, we have $L^2(m{-}1) \hookrightarrow L^p(\R^n)$ for some
$p < n/(n{-}1)$, by H\"older's inequality. Thus, applying
Lemma~\ref{intlem} below, we see that $\int_{\R^n}f \dd x  = 0$
if $f$ is as in the statement. To show that $K[f] = (K\circ\div)[g]$,
we have to justify the integration by parts leading to \eqref{Kcircdef}. As
in Section~\ref{ssec62}, we denote $\chi_k(x) = \chi(x/k)$, where
$\chi : \R^n \to [0,1]$ is a smooth cut-off function satisfying
$\chi(x) = 1$ for $|x| \le 1$ and $\chi(x) = 0$ for $|x| \ge 2$. 
We start from the identity
\[
  \int_{\R^n} \chi_k(y)\Bigl(G(x,y) \div g(y) + \nabla_y G(x,y)\cdot
  g(y)\Bigr) \dd y \,=\, -\int_{\R^n} G(x,y)\,g(y)\cdot\nabla\chi_k(y)
  \dd y\,,
\]
which holds for all $k \in \N^*$ and almost all $x \in \R^n$. If $m \in
(n/2,n/2+\beta)$, the left hand-side has a limit in $L^2(m{-}2)$ as $k
\to +\infty$, in view of Propositions~\ref{Kernelprop1} and \ref{Kernelprop2}.
To prove the desired result, it is thus sufficient to show that the
right-hand side converges to zero in the sense of distributions. 
Integrating against a test function $\psi \in C^\infty_c(\R^n)$ and denoting
$\Psi(y) = \int_{\R^n}G(x,y) \psi(x)\dd x$, we have to show that
\[
  \lim_{k \to +\infty} \int_{\R^n} \Psi(y)\,g(y)\cdot\nabla\chi_k(y)
  \dd y \,\equiv\, \lim_{k \to +\infty} \frac1k \int_{k \le |y| \le 2k}
  \Psi(y) \,g(y)\cdot\nabla\chi(y/k) \dd y \,=\, 0\,.
\]
This in turn is an easy consequence of H\"older's inequality, if
we use the facts that $g \in L^2(m{-}1)$ for some $m > 2-n/2$, and
$|\Psi(y)| \le C(1{+}|y|)^{2-n}$ if $n \ge 3$ or $|\Psi(y)|
\le C\log(2{+}|y|)$ if $n = 2$.   
\end{proof}

\begin{rem}\label{Krem}
As a final comment, we mention that, if $f \in L^2_0(m)$ for some $m \in
(n/2,n/2+1)$, there exists $g \in L^2(m{-}1)^n$ such that $\div g = f$,
see Lemma~\ref{divlem}. Thus $K[f] = (K\circ\div)[g]$ by
Corollary~\ref{2Kcor}, and estimate \eqref{uweight0} can be deduced
from estimate \eqref{uweight2} if $m < n/2 + \beta$. 
\end{rem}


\section{The diffusion operator in self-similar 
variables}\label{sec3}

In this section we study the generator $L$ of the evolution equation 
\eqref{veqlimit}, considered as an operator in the weighted space
$L^2(m) \subset L^2(\R^n)$ for some $m \ge 0$. This operator is defined
by
\begin{equation}\label{Ldef}
  L u \,=\, \div(A_\infty(x) \nabla u) + \frac{1}{2}\,x \cdot \nabla u
  + \frac{n}{2}\,u\,, \qquad u \in D(L)\,,
\end{equation}
where $D(L) \subset L^2(m)$ is the maximal domain
\[
  D(L) \,=\, \big\{u \in L^2(m) \cap H^1(\R^n)\,\big|\,
  \div(A_\infty(x) \nabla u) + {\TS \frac12} x \cdot \nabla u
  \in L^2(m)\big\}\,.
\]

\subsection{The constant coefficient case}\label{ssec31}
  
In the particular case where $A_\infty = \1$, the operator $L$ is
studied in detail in \cite[Appendix~A]{GWa}. It is shown there that
the spectrum of $L$ in $L^2(m)$ consists of two different parts\:

\smallskip\noindent
a) a countable sequence of discrete eigenvalues\: $\sigma_\disc =
\{-k/2\,|\, k = 0,1,2,\dots\}$;

\smallskip\noindent
b) a half-plane of essential spectrum\: $\sigma_\ess = \{z \in
\C\,|\, \Re(z) \le \frac{n}{4} - \frac{m}{2}\}$.

\smallskip\noindent
The spectrum $\sigma = \sigma_\disc \cup \sigma_\ess$ is represented
in Figure~\ref{fig1} for a typical choice of the parameters $n,m$.
It is worth noting that the discrete spectrum $\sigma_\disc$
does not depend on $m$. In fact, conjugating the operator $L$
with the Gaussian weight $e^{-|x|^2/8}$, we obtain the useful
relation
\begin{equation}\label{Lconj1}
  e^{|x|^2/8}\,L\, e^{-|x|^2/8} \,=\, \Delta - \frac{|x|^2}{16} +
  \frac{n}{4}\,,
\end{equation}
where the right-hand side is the harmonic operator in $\R^n$,
normalized so that its spectrum in $L^2(\R^n)$ is precisely the
sequence $\sigma_\disc$. This shows that the eigenfunctions of $L$
associated with the discrete spectrum $\sigma_\disc$ have Gaussian
decay at infinity, hence belong to $L^2(m)$ for any $m \ge
0$. Moreover we have $L \phi = 0$, where
\begin{equation}\label{phi1}
  \phi(x) \,=\, \frac{1}{(4\pi)^{n/2}}\, e^{-|x|^2/4}\,,
  \qquad x \in \R^n\,,
\end{equation}
and differentiating $k$ times the principal eigenfunction $\phi$
we obtain the $k$-th order Hermite functions that span the
kernel of $L + k/2$ if $m$ is sufficiently large, namely 
$m > k + n/2$.

\begin{figure}[ht]
  \begin{center}
  \begin{picture}(200,130)
  \put(0,10){\includegraphics[width=1.00\textwidth]{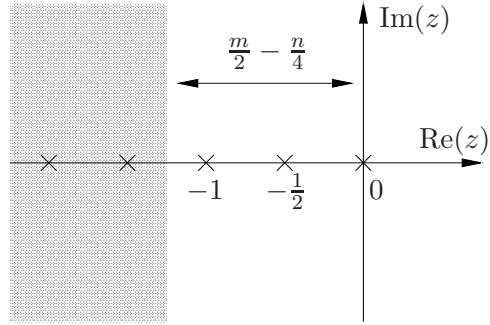}}
  \put(139,50){$0$}
  \put(100,50){$\textstyle -\frac{1}{2}$}
  \put(70,50){$-1$}
  \put(85,103){$\textstyle \frac{m}{2}-\frac{n}{4}$}
  \put(158,69){$\Re(z)$}
  \put(143,115){$\Im(z)$}
  \end{picture}
  \caption{When $A_\infty = \1$ the spectrum of the operator $L$ in the space
  $L^2(m)$ consists of a sequence of eigenvalues $0, -1/2, -1, \dots $ and
  of essential spectrum filling the half-space $\{z \in \C\,|\,
  \Re(z) \le \frac{n}{4} - \frac{m}{2}\}$. For any $k \in \N$, 
  the eigenvalue $-k/2$ is isolated if $m > k+n/2$.\label{fig1}}
  \end{center}
\end{figure}

On the other hand, the essential spectrum $\sigma_\ess$ has a
completely different origin, which is revealed by applying the
Fourier transform so that $L$ becomes a first-order differential
operator acting on the Sobolev space $H^m(\R^n)$, see
\cite[Appendix~A]{GWa}. Using this observation, on can show that each
complex point $z \notin \sigma_\disc$ is an eigenvalue of $L$ of
infinite multiplicity (if $n \ge 2$), with eigenfunctions that decay
slowly, like $|x|^{2\Re(z)-n}$, as $|x| \to \infty$. In particular,
these eigenvalues belong to $L^2(m)$ if and only if $\Re(z) <
\frac{n}{4} - \frac{m}{2}$, which explains why the essential spectrum
$\sigma_\ess$, unlike $\sigma_\disc$, is sensitive to the value of
$m$.

To summarize, in the case where $A_\infty = \1$ the operator $L$ has
$k+1$ isolated eigenvalues if the parameter $m$ is large enough so
that $m > k + n/2$, see Figure~1. In particular, if $m > n/2$, the
zero eigenvalue is simple and isolated, and the rest of the spectrum is
contained in the half-plane $\{z \in \C\,|\, \Re(z) \le -\mu\}$,
where $\mu = \min(1/2,m/2 - n/4)$. Note that the assumption
$m > n/2$ ensures that $L^2(m) \hookrightarrow L^1(\R^n)$.

\subsection{A nontrivial example\: the Meyers-Serrin operator}\label{ssec32}

We next study in detail the instructive example where the limiting
matrix $A_\infty$ is given by \eqref{MSexample}. It turns out that,
in that case too, the eigenvalues and eigenfunctions of the linear
operator \eqref{Ldef} can be computed explicitly, and exhibit a
nontrivial behavior when the parameter $b > 0$ is varied. In
what follows we denote
\begin{equation}\label{Abdef}
  A_b(x) \,=\, b\,\1 + (1-b) \frac{x \otimes x}{|x|^2}\,,
  \qquad x \in \R^n\setminus\{0\}\,,
\end{equation}
where $\1$ is the identity matrix and $(x\otimes x)_{ij} = x_ix_j$.
Elliptic equations with a diffusion matrix of the form \eqref{Abdef}
were considered by Meyers and Serrin nearly sixty years ago.  If the
parameter $b > 0$ is small enough, they turn out to be useful to illustrate
the optimality of general results concerning the interior regularity of
solutions \cite[Section~5]{Me} or the local uniqueness \cite{Se,BB}.

As is clear from definition \eqref{Abdef}, we have $A_b(x)x = x$ and 
$A_b(x)y = by$ for any $y \in \R^n$ that is orthogonal to $x$. If 
$b \neq 1$, the eigenvalues of $A_b(x)$ are thus $1$ (multiplicity $1$) 
and $b$ (multiplicity $n-1$). For the evolution equation $\partial_t u = 
\div(A_b(x)\nabla u)$, this means that diffusion in the radial direction 
is unaffected by the value of $b$, whereas the diffusion rate 
is increased ($b > 1$) or decreased ($b < 1$) in the transverse 
directions. 

We now consider the rescaled diffusion operator $L_b$ defined 
by 
\begin{equation}\label{Lb1}
  L_b u \,=\, \div(A_b \nabla u) + \frac{1}{2}x \cdot \nabla u
  + \frac{n}{2}u\,, \qquad x \in \R^n\,.
\end{equation}
Since
\[
  \div\Bigl(\frac{x \otimes x}{|x|^2}\,\nabla u\Bigr) \,=\, 
  \div\Bigl(\frac{x}{|x|^2}\,x\cdot \nabla u\Bigr) \,=\, 
  \frac{1}{|x|^2}\Bigl((x\cdot\nabla)^2 u + (n{-}2)\,
  x\cdot\nabla u \Bigr)\,,
\]
we obtain the alternative form
\begin{equation}\label{Lb2}
  L_b u \,=\, b \Delta u +  \frac{1{-}b}{|x|^2}\Bigl((x\cdot\nabla)^2 u 
  + (n{-}2)\,x\cdot\nabla u \Bigr) + \frac{1}{2}x \cdot \nabla u
  + \frac{n}{2}u\,.
\end{equation}
As is clear from \eqref{Lb2}, the operator $L_b$ is invariant under
rotations around the origin, and this makes it possible to compute
its eigenvalues and eigenvectors by the classical method of 
``separation of variables''. 

Indeed, let $p : \R^n \to \R$ be a {\em harmonic polynomial} 
that is {\em homogeneous of degree} $\ell \in \N$. We look for 
eigenfunctions of $L_b$ of the form 
\begin{equation}\label{Ansatz1}
  u(x) \,=\, p(x)\,\phi(|x|)\,, \qquad x \in \R^n\,,
\end{equation}
where $\phi : \R_+ \to \R$. As $\Delta p = 0$ and $x \cdot \nabla p = 
\ell p$, we easily find
\[
  \Delta u(x) \,=\, p(x)\Bigl(\phi''(r) + \frac{n-1+2\ell}{r}\,\phi'(r)
  \Bigr)\,, \qquad \hbox{where } r = |x|\,.
\]
Similarly
\[
  x\cdot\nabla u = p\bigl(r\phi' + \ell \phi\bigr)\,, \qquad  
  (x\cdot\nabla)^2 u = p\bigl(r^2\phi'' + (2\ell+1)r \phi' + 
  \ell^2 \phi\bigr)\,,
\]
hence   
\[
  \div\Bigl(\frac{x}{|x|^2}\,x\cdot \nabla u\Bigr) \,=\, 
  p\Bigl(\phi'' + \frac{n-1+2\ell}{r}\,\phi' + \frac{\ell(n-2+\ell)}{r^2}
  \,\phi\Bigr)\,.
\]
It follows that $(L_bu)(x) = p(x) (L_{b,\ell}\,\phi)(|x|)$, where
\begin{equation}\label{Lbldef}
  L_{b,\ell}\,\phi \,=\, \phi'' + \frac{n-1+2\ell}{r}\,\phi' + (1-b)
  \frac{\ell(n-2+\ell)}{r^2}\,\phi + \frac{r}{2}\,\phi' 
  + \frac{n+\ell}{2}\,\phi\,. 
\end{equation} 

In a second step, we look for eigenfunctions of the radial operator 
$L_{b,\ell}$ of the following form
\begin{equation}\label{Ansatz2}
  \phi(r) \,=\, r^{\gamma} e^{-r^2/4} \psi(r^2/4)\,, \qquad r > 0\,,
\end{equation}
where $\gamma \in \R$ is a parameter that will be determined below. 
A direct computation shows that
\begin{align*}
  \phi'(r) \,&=\, r^\gamma e^{-r^2/4} \biggl(\frac{r}{2}\psi'\Bigl(\frac{r^2}{4}
  \Bigr) + \Bigl(\frac{\gamma}{r} - \frac{r}{2}\Bigr)\psi\Bigl(\frac{r^2}{4}
  \Bigr)\biggr)\,, \\
  \phi''(r) \,&=\,  r^\gamma e^{-r^2/4} \biggl(\frac{r^2}{4}\psi'' + 
  \Bigl(\gamma + \frac12 - \frac{r^2}{2}\Bigr)\psi' + 
  \Bigl(\frac{\gamma^2{-}\gamma}{r^2} - \gamma - \frac12 + \frac{r^2}{4}
  \Bigr)\psi\biggr)\,, 
\end{align*}
and it follows that $(L_{b,\ell}\,\phi)(r) = r^\gamma e^{-r^2/4} (L_{b,\ell,\gamma} 
\psi)(r^2/4)$, where the differential operator $L_{b,\ell,\gamma}$ acts 
on the variable $y = r^2/4 \in \R_+$ and is defined in the following 
way. Setting
\begin{equation}\label{alphadef}
  \alpha \,=\, \frac{n}{2} -1 + \gamma + \ell\,, \qquad
  \delta \,=\, \gamma^2 + \gamma(n-2+2\ell) + (1-b)\ell(n-2+\ell)\,,
\end{equation}
we have the explicit expression
\begin{equation}\label{Lblgdef}
  \bigl(L_{b,\ell,\gamma}\,\psi\bigr)(y) \,=\, y\psi''(y) + (\alpha + 1 - y)
  \psi'(y) + \Bigl(\frac{\delta}{4y} - \frac{\gamma+\ell}{2}\Bigr)\psi(y)\,, 
  \qquad y > 0\,.
\end{equation} 
To find eigenfunctions, it is necessary to choose the parameter $\gamma$ 
in such a way that the quantity $\delta$ defined in \eqref{alphadef} 
vanishes. This leads to
\begin{equation}\label{gammadef}
  \gamma \,=\, \frac12\Bigl(-(n-2+2\ell) + \sqrt{(n-2)^2 + 4b\ell
  (n-2+\ell)}\Bigr)
\end{equation}
Note that $\gamma = 0$ if either $b = 1$ (trivial case) or $\ell = 0$
(radially symmetric solutions). In the general case, we always have 
$\gamma + \ell \ge 0$, which means that $p(x)|x|^\gamma$ is bounded 
near the origin. 

\begin{rem}\label{Serrin}
Taking the other sign in front of the square root in \eqref{gammadef} 
would give more singular solutions of the eigenvalue equation, for which 
the gradient is not square integrable near the origin; these are 
examples of the ``pathological solutions'' considered by Serrin 
\cite{Se}.
\end{rem}

The eigenfunctions of the operator $L_{b,\ell,\gamma}$ are easy 
to determine when $\gamma$ is chosen so that $\delta = 0$, because
for any $k \in \N$ the differential equation
\[
  y\psi''(y) + (\alpha + 1 - y) \psi'(y) + k \psi(y) \,=\, 0\,,
  \qquad y > 0\,,
\] 
has a solution of the form $\psi(y) = \mathrm{L}_k^{(\alpha)}(y)$, 
where $\mathrm{L}_k^{(\alpha)}$ is the $k^{\mathrm{th}}$ (generalized)
Laguerre polynomial with parameter $\alpha$, see \cite[Section~22]{AS}. 
In particular, for $k = 0,1,2$, we have 
\[
  \mathrm{L}_0^{(\alpha)}(y) \,=\, 1\,, \qquad 
  \mathrm{L}_1^{(\alpha)}(y) \,=\, -y + \alpha + 1\,, \qquad 
  \mathrm{L}_2^{(\alpha)}(y) \,=\, \frac{y^2}{2} - (\alpha+2)y + 
  \frac{(\alpha+1)(\alpha+2)}{2}\,.
\]
Summarizing, the calculations above lead to the following statement.  

\begin{prop}\label{Lbspec}
Fix $b > 0$, $\ell \in \N$, $k \in \N$, and let
\begin{equation}\label{spec1}
  \alpha \,=\, \frac12 \sqrt{(n-2)^2 + 4b\ell(n-2+\ell)}\Bigr)\,, 
  \qquad \gamma \,=\, -\frac{n}{2} + 1 -\ell + \alpha\,.
\end{equation}
If $p : \R^n \to \R$ is a harmonic polynomial that is homogeneous 
of degree $\ell$ and if 
\begin{equation}\label{spec2}
  u(x) \,=\, p(x) |x|^\gamma e^{-|x|^2/4}\,\mathrm{L}_k^{(\alpha)}(|x|^2/4)\,,
  \qquad x \in \R^n\,,
\end{equation}
where $\mathrm{L}_k^{(\alpha)}$ is the $k^{\mathrm{th}}$ Laguerre polynomial 
with parameter $\alpha$, then $u$ is an eigenfunctions of the 
differential operator $L_b$ defined in \eqref{Lb2} in the sense that
\begin{equation}\label{spec3}
  L_b u \,=\, \lambda u\,, \qquad \hbox{where} \quad  
  \lambda = - \frac{\gamma+\ell}{2} -k\,. 
\end{equation}
\end{prop}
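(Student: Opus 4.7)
The proposition is essentially a verification: all the reductions needed have already been carried out in the paragraphs preceding the statement, so the plan is to assemble them cleanly and read off the eigenvalue.

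First, I would justify the separation of variables. For $u(x) = p(x)\phi(|x|)$ with $p$ a harmonic polynomial homogeneous of degree $\ell$, the identities $\Delta p = 0$ and $x\cdot\nabla p = \ell p$ immediately give the formulas for $\Delta u$, $x\cdot\nabla u$, $(x\cdot\nabla)^2 u$ and $\div(|x|^{-2}\,x\,(x\cdot\nabla u))$ displayed above. Plugging these into \eqref{Lb2} shows $(L_b u)(x) = p(x)\,(L_{b,\ell}\phi)(|x|)$, with $L_{b,\ell}$ as in \eqref{Lbldef}. Thus $u$ is an eigenfunction of $L_b$ with eigenvalue $\lambda$ iff $\phi$ is an eigenfunction of $L_{b,\ell}$ with the same eigenvalue.

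Next, I would apply the secondary substitution $\phi(r) = r^\gamma e^{-r^2/4}\psi(r^2/4)$. The chain-rule computation recorded above yields $(L_{b,\ell}\phi)(r) = r^\gamma e^{-r^2/4}(L_{b,\ell,\gamma}\psi)(r^2/4)$, where the operator $L_{b,\ell,\gamma}$ in the variable $y = r^2/4$ is given by \eqref{Lblgdef} with $\alpha$ and $\delta$ defined in \eqref{alphadef}. To eliminate the singular coefficient $\delta/(4y)$, I would choose $\gamma$ as a root of the quadratic $\gamma^2 + \gamma(n-2+2\ell) + (1-b)\ell(n-2+\ell) = 0$. Taking the root that yields the less singular behavior at the origin, and simplifying the discriminant via
\[
  (n-2+2\ell)^2 - 4(1-b)\ell(n-2+\ell) \,=\, (n-2)^2 + 4b\ell(n-2+\ell)\,,
\]
I recover exactly $\gamma = -\frac{n}{2}+1-\ell+\alpha$ with $\alpha$ as in \eqref{spec1}.

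With this choice of $\gamma$, the eigenvalue equation $L_{b,\ell,\gamma}\psi = \lambda\psi$ becomes
\[
  y\psi''(y) + (\alpha+1-y)\psi'(y) + \Bigl(-\lambda - \frac{\gamma+\ell}{2}\Bigr)\psi(y) \,=\, 0\,,
\]
which is the associated Laguerre equation. For $k \in \N$ it has the polynomial solution $\psi(y) = \mathrm{L}_k^{(\alpha)}(y)$ precisely when the coefficient in parentheses equals $k$, i.e. $\lambda = -\frac{\gamma+\ell}{2} - k$. Tracing back through the two substitutions produces the eigenfunction \eqref{spec2} with eigenvalue \eqref{spec3}, completing the argument.

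There is really no hard step: the computation is mechanical once the two ans\"atze are made. The only point that deserves a comment (and is addressed in Remark~\ref{Serrin}) is the choice of sign in front of the square root defining $\gamma$; the discarded root produces solutions whose gradient fails to be square integrable near the origin, hence does not belong to the natural functional framework, but this does not affect the verification of the stated eigenvalue relation.
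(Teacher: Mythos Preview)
Your proposal is correct and follows exactly the same approach as the paper: the proposition is stated as a summary of the preceding computations (separation of variables via harmonic polynomials, reduction to the radial operator $L_{b,\ell}$, the substitution $\phi(r)=r^\gamma e^{-r^2/4}\psi(r^2/4)$, choice of $\gamma$ to kill $\delta$, and identification of the Laguerre equation), and you have reproduced each step accurately, including the discriminant simplification and the remark on the choice of sign for $\gamma$.
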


\begin{figure}[ht]
  \begin{center}
  \begin{picture}(480,220)
  \put(0,0){\includegraphics[width=1.00\textwidth]{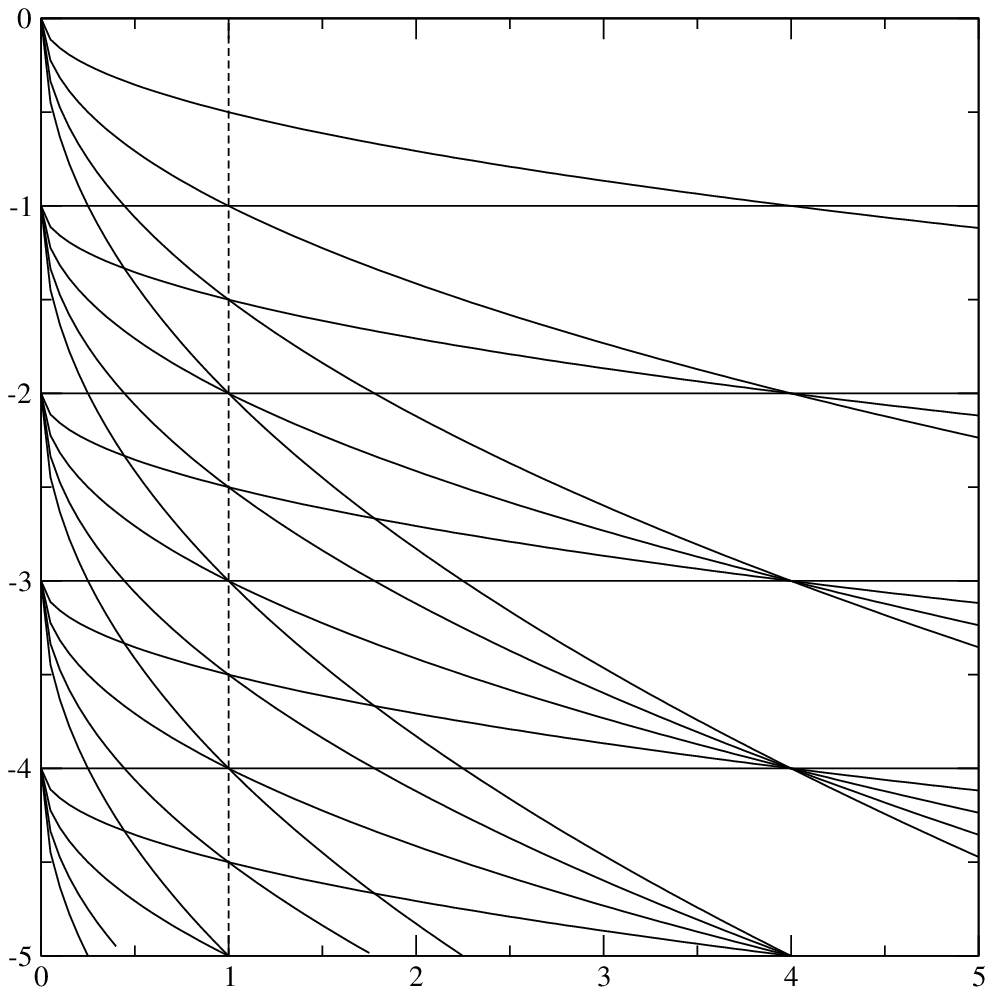}}
  \put(240,0){\includegraphics[width=1.00\textwidth]{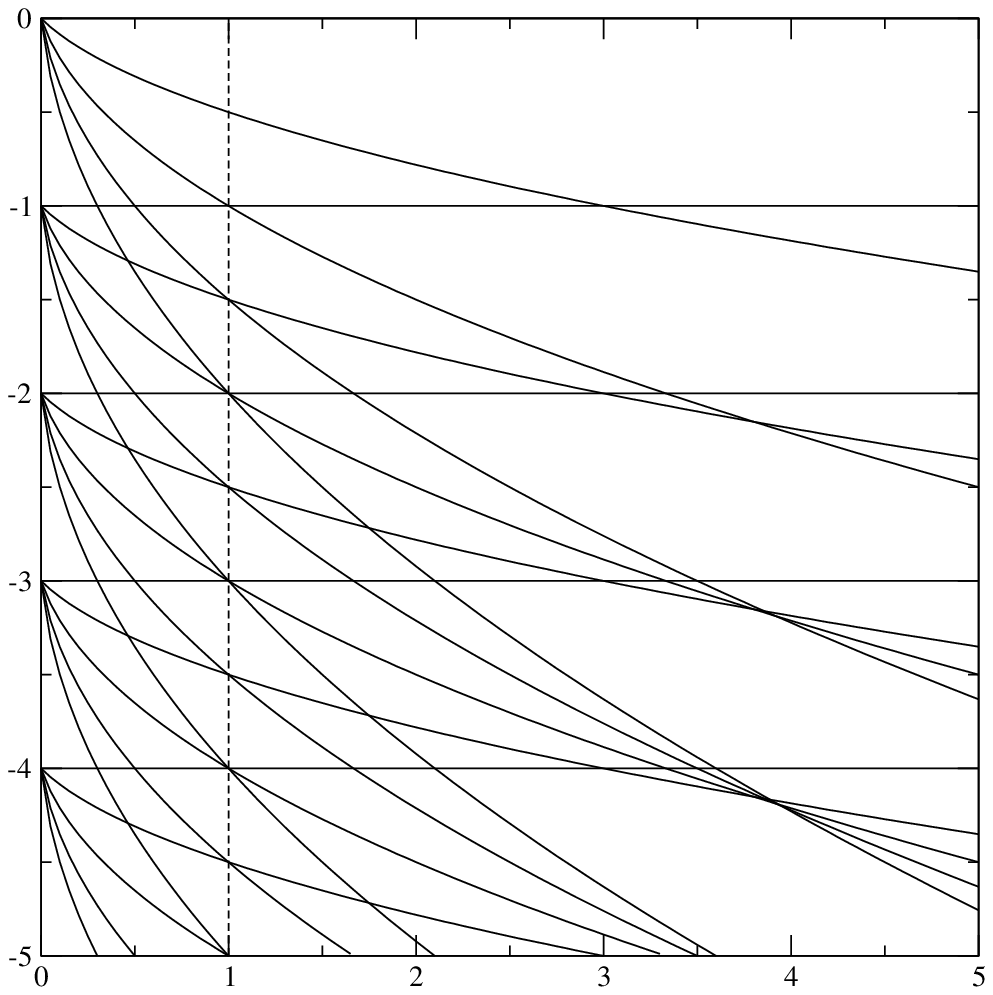}}
  \put(140,190){$n = 2$}
  \put(380,190){$n = 3$}
  \end{picture}
  \caption{The eigenvalues $\lambda = \lambda(b,\ell,k)$ of the
  linear operator \eqref{Lb1} are represented as a function of
  $b \in [0,5]$, for $\ell,k = 0,1,2,3,4$ and $n = 2$ (left) or
  $n = 3$ (right). The horizontal lines are eigenvalues
  corresponding to radially symmetric eigenfunctions ($\ell = 0$). 
  The vertical dashed line highlights the constant coefficient case
  $b =1$, where $\lambda = -\ell/2 -k$.\label{fig2}}
  \end{center}
\end{figure}

\begin{rem}\label{selfadjoint}
For all values of the parameter $b > 0$, the operator $L_b$ is selfadjoint 
in the weighted $L^2$ space
\[
  X \,=\, \bigl\{u \in L^2(\R^n)\,\big|\, e^{|x|^2/8}\,u \in 
  L^2(\R^n)\bigr\}\,.
\]
Indeed, if $v = e^{|x|^2/8}u$, it a direct calculation shows that
$\cL_b v = e^{|x|^2/8} L_b u$ where 
\begin{equation}\label{cLb}
  \cL_b v \,=\, \div(A_b \nabla v) - \frac{|x|^2}{16}v + 
  \frac{n}{4}v\,, \qquad x \in \R^n\,.
\end{equation}
The operator $\cL_b$ is obviously symmetric in $L^2(\R^n)$, and 
becomes selfadjoint when defined on its maximal domain; moreover
$\cL_b$ has compact resolvent, hence purely discrete spectrum.
By conjugation, the same properties hold for the operator $L_b$ 
in the weighted space $X$. In view of \eqref{spec2}, all
eigenfunctions given by Proposition~\ref{Lbspec} belong to 
$X$, and the method of separation of variables ensures that
the corresponding eigenfunctions can be chosen so as to 
form an orthogonal basis of $X$. We conclude that all 
eigenvalues of $L_b$ in $X$ are given by expressions 
\eqref{spec1}, \eqref{spec3}. The first few of them are
represented in Figure~\ref{fig2}, for $n = 2$ and $n = 3$. 
\end{rem}

\begin{rem}\label{betarem}
The eigenfunction of $L_b$ given by \eqref{spec2} satisfies $u(x) \sim
|x|^{\ell+\gamma}$ as $x \to 0$. In view of \eqref{spec1}, the exponent
$\ell+\gamma$ vanishes if $\ell = 0$ and is an increasing
function of $\ell \in \N$. On the other hand, using estimate
\eqref{Gbd1} and the fact that $u$ solves the elliptic equation
\eqref{uell} with $A_\infty = A_b$ and $f(x) = \frac12 x \cdot \nabla u
+ (\frac{n}{2}-\lambda)u$, it is not difficult to verify that
$|u(x) - u(0)| \le C |x|^\beta$ as $|x| \to 0$. This shows that
$\beta \le \ell + \gamma$ for any $\ell \ge 1$, and taking $\ell = 1$
we obtain
\begin{equation}\label{betaupper}
  0 \,<\, \beta \,\le\, - \frac{n}{2} + 1 + \frac12
  \sqrt{(n-2)^2 + 4b(n-1)}\Bigr)\,.
\end{equation}
The right-hand side of \eqref{betaupper} is an increasing function
of $b$ which converges to 0 as $b \to 0$ and to $1$ as $b \to 1$.
We conjecture that the upper bound \eqref{betaupper} is optimal
for $b \in (0,1)$. 
\end{rem}

\subsection{Properties of the principal eigenfunction\: the general case}
\label{ssec33}

After considering two particular examples, we now return to the
general case where the matrix $A_\infty(x)$ satisfies the assumptions
listed at the beginning of Section~\ref{sec2}. Much less is known on
the operator $L$ in that situation, but it is still possible to prove
that the kernel of $L$ in the space $L^2(m)$ is one-dimensional if $m
> n/2$, so that $L^2(m) \hookrightarrow L^1(\R^n)$. We claim that
the kernel of $L$ is spanned by the function $\phi : \R^n \to \R_+$
defined by
\begin{equation}\label{phi2}
  \phi(x) \,=\, \Gamma(x,0,1)\,, \qquad x \in \R^n\,,
\end{equation}
where $\Gamma(x,y,t)$ is the fundamental solution of \eqref{uhom}.  We
already know that $\phi$ is H\"older continuous, and the estimates
\eqref{Gammabounds} imply that $\phi$ satisfies the Gaussian bounds
\eqref{phibounds}.  Moreover the normalization condition
$\int_{\R^n}\phi(x)\dd x = 1$ follows from \eqref{IntGamma}. Finally,
we observe that the definition \eqref{phi2} reduces to \eqref{phi1} in
the particular case where $A_\infty = \1$.

\begin{lem}\label{philem}
If $\phi$ defined by \eqref{phi2}, then $\phi \in D(L)$ and 
$L\phi = 0$. 
\end{lem}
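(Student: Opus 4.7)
The plan is to exploit the self-similar structure of the fundamental solution. Setting $\lambda = t^{-1/2}$ and $y = 0$ in the scaling identity \eqref{Gammascale} yields
\[
  \Gamma(x,0,t) \,=\, t^{-n/2}\,\phi\bigl(x/\sqrt{t}\bigr)\,, \qquad x \in \R^n\,,\ t > 0\,,
\]
so the function $u(x,t) := \Gamma(x,0,t)$ is a self-similar solution of the parabolic equation $\partial_t u = \div(A_\infty \nabla u) = -Hu$. Formally, differentiating this identity at $t = 1$ produces exactly $L\phi = 0$; the task is to make this rigorous and to verify $\phi \in D(L)$.

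First I would verify $\phi \in L^2(m) \cap H^1(\R^n)$. Membership $\phi \in L^2(m)$ for every $m \ge 0$ is immediate from the Gaussian upper bound \eqref{phibounds}. For the $H^1$ property I use the analytic semigroup theory of Section~\ref{ssec22}: writing $\phi = e^{-H/2}\Gamma(\cdot,0,1/2)$ with $\Gamma(\cdot,0,1/2) \in L^2(\R^n)$ by \eqref{Gammabounds}, the smoothing property of the semigroup generated by $-H$ gives $\phi \in D(H^k)$ for every $k \ge 1$, hence in particular $\phi \in D(H) \subset H^1(\R^n)$ by Proposition~\ref{prop_domain}.

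To derive the equation, I fix a test function $v \in C_c^\infty(\R^n)$ and consider
\[
  f(t) \,=\, \int_{\R^n} u(x,t)\,v(x)\dd x \,=\, \int_{\R^n} \phi(y)\,v(\sqrt{t}\,y)\dd y\,, \qquad t > 0\,,
\]
the two expressions being equal by the self-similar form after the substitution $y = x/\sqrt{t}$. Differentiating the first expression at $t = 1$ via $\partial_t u = -Hu$ in $L^2$ and the quadratic form representation (Proposition~\ref{prop_domain}) gives $f'(1) = -\int_{\R^n} (A_\infty\nabla\phi)\cdot\nabla v\dd x$. Differentiating the second expression under the integral (justified by the Gaussian decay \eqref{phibounds} of $\phi$ and the compact support of $v$) yields $f'(1) = \tfrac{1}{2}\int_{\R^n}\phi(y)\,y\cdot\nabla v(y)\dd y$, which upon integrating by parts becomes $-\tfrac{1}{2}\int_{\R^n}(n\phi + y\cdot\nabla\phi)\,v\dd y$. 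Equating the two expressions gives, for every $v \in C_c^\infty(\R^n)$,
\[
  \int_{\R^n} (A_\infty\nabla\phi)\cdot\nabla v\dd x \,=\, \tfrac{1}{2}\int_{\R^n}\bigl(n\phi + x\cdot\nabla\phi\bigr)v\dd x\,,
\]
which is precisely the weak form of $L\phi = 0$. Consequently $\div(A_\infty\nabla\phi) + \tfrac{1}{2}x\cdot\nabla\phi = -\tfrac{n}{2}\phi \in L^2(m)$ in distributions, so $\phi \in D(L)$ with $L\phi = 0$.

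The main technical point is to control $x\cdot\nabla\phi$ (needed to justify the integration by parts globally) and to justify the differentiation under the integral sign for $f(t)$ expressed in the variable $y$. Both rest on a pointwise Gaussian estimate of the form $|\nabla\phi(x)| \le C e^{-|x|^2/C}$, which follows from standard parabolic gradient estimates applied to $\Gamma(\cdot,0,t)$ in a neighborhood of $t=1$ together with the Aronson upper bound \eqref{Gammabounds}. Given that estimate, $\phi y \cdot \nabla v$ and $v(n\phi + y\cdot\nabla\phi)$ are integrable with Gaussian tails, the dominated convergence theorem applies, and the distributional identity extends unambiguously to the $L^2(m)$ identity asserted by the lemma.
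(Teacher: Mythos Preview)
Your approach is essentially the same as the paper's: both exploit the scaling identity \eqref{Gammascale} at $y=0$ and differentiate at $t=1$. The paper writes the identity as $\phi(x)=t^{n/2}\Gamma(x\sqrt t,0,t)$ and differentiates directly, whereas you write it as $\Gamma(x,0,t)=t^{-n/2}\phi(x/\sqrt t)$ and pair with a test function before differentiating. The $H^1$ and $L^2(m)$ verifications are identical.

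There is, however, a genuine error in your last paragraph. The pointwise Gaussian gradient bound $|\nabla\phi(x)|\le C e^{-|x|^2/C}$ is \emph{false} in general: the matrix $A_\infty$ is homogeneous of degree zero, hence discontinuous at the origin unless it is constant, and ``standard parabolic gradient estimates'' do not apply there. In fact the paper shows later (Proposition~\ref{gradphi} and Remark~\ref{phirem}) that $\nabla\phi$ may blow up at the origin; the Meyers--Serrin example of Section~\ref{ssec32} produces eigenfunctions behaving like $|x|^{\ell+\gamma}$ with $\ell+\gamma<1$ near $0$. So this step, as written, cannot be justified.

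Fortunately your argument does not actually need that estimate. Since $v\in C^\infty_c(\R^n)$, the differentiation of $t\mapsto\int\phi(y)\,v(\sqrt t\,y)\dd y$ only hits $v$, and the resulting integrand $\phi(y)\,\tfrac{y}{2\sqrt t}\cdot\nabla v(\sqrt t\,y)$ is supported in a fixed compact set for $t$ near $1$ and is dominated by $C\,\1_{\{|y|\le R\}}$ thanks to the $L^\infty$ bound on $\phi$ from \eqref{phibounds}. Likewise, the integration by parts $\int\phi\,y\cdot\nabla v\dd y = -\int v\,\div(y\phi)\dd y$ requires only $\phi\in H^1_\loc$, which you already have from $\phi\in H^1(\R^n)$; there is no boundary term because $v$ has compact support. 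Simply delete the appeal to the Gaussian gradient bound and replace it by these two observations, and the proof goes through.
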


\begin{proof}
In view of \eqref{phibounds}, we have $\phi \in L^2(m)$ for any $m \ge
0$. Moreover, the definition \eqref{phi2} implies that $\phi =
e^{-H/2}\psi$ where $\psi(x) = \Gamma(x,0,1/2)$. As $\psi \in
L^2(\R^n)$, we thus have $\phi \in D(H) \subset H^1(\R^n)$. To prove
that $L\phi = 0$, we start from identity \eqref{Gammascale} with
$(y,t) = (0,1)$, and we set $\lambda = \sqrt{t}$ where $t > 0$
is a new parameter. This gives the useful relation
\begin{equation}\label{phi3}
  \phi(x) \,=\, t^{n/2}\,\Gamma\bigl(x \sqrt{t},0,t\bigr)\,, \qquad
  x \in \R^n\,, \quad t > 0\,.
\end{equation}
The idea is now to differentiate both sides of \eqref{phi3} with respect
to $t$, at point $t = 1$. Using the fact that, by definition,
the fundamental solution $(x,t) \mapsto \Gamma(x,y,t)$ is a solution
of the evolution equation \eqref{uhom} for any fixed $y \in \R^n$,
we obtain after straightforward calculations\:
\begin{equation}\label{phi4}
  0 \,=\, \frac{n}{2}\,\phi(x) + \frac{1}{2}\,x \cdot \nabla
  \phi(x) + \div(A_\infty(x)\nabla \phi(x)) \,\equiv\,
  \bigl(L\phi\bigr)(x)\,, \qquad x \in \R^n\,.
\end{equation}
This shows that $\phi \in D(L)$ and $L\phi = 0$. 
\end{proof}

To complete the proof of Proposition~\ref{phiprop}, it remains
to verify that the kernel of $L$ in the space of integrable
functions is one-dimensional. 

\begin{lem}\label{philem2}
If $\psi \in H^1(\R^n) \cap L^1(\R^n)$ satisfies $L\psi = 0$
and $\int_{\R^n}\psi \dd x = 1$, then $\psi = \phi$. 
\end{lem}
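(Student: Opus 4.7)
Set $w := \psi - \phi$, so that $w \in H^1(\R^n) \cap L^1(\R^n)$ with $Lw = 0$ distributionally and $\int_{\R^n} w\,dy = 0$; the goal is to show $w \equiv 0$. My plan is to undo the self-similar change of variables and reduce the uniqueness of $\phi$ in $\ker L$ to the uniqueness of the fundamental solution of \eqref{uhom} with initial data the zero measure. Concretely, define
\[
  u(x,t) \,:=\, t^{-n/2}\, w\bigl(x/\sqrt{t}\bigr)\,, \qquad x \in \R^n\,,\ t > 0\,.
\]
A direct computation, using exactly the homogeneity of $A_\infty$ exploited in the proof of Lemma~\ref{philem} and the identity $Lw = 0$, shows that $u$ is a (distributional) weak solution of $\partial_t u = \div\bigl(A_\infty(x)\nabla u\bigr)$ on $\R^n \times (0,+\infty)$. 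Moreover, for any $\eta \in C^\infty_c(\R^n)$ and $t > 0$, the change of variable $y = x/\sqrt{t}$ gives $\int_{\R^n} u(x,t)\eta(x)\,dx = \int_{\R^n} w(y)\,\eta(\sqrt{t}\,y)\,dy$, which tends to $\eta(0)\int w\,dy = 0$ as $t \to 0^+$ by dominated convergence. Thus $u$ has trace zero at $t = 0$ in the sense of tempered measures.

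To exploit this, fix $T > 0$ and $\eta \in C^\infty_c(\R^n)$, and consider the backward profile
\[
  V(x,s) \,:=\, \int_{\R^n}\Gamma(x,y,T-s)\,\eta(y)\,dy\,, \qquad 0 \le s < T\,,
\]
which extends continuously to $s = T$ with $V(\cdot,T) = \eta$. By \eqref{IntGamma} we have $|V(x,s)| \le \|\eta\|_{L^\infty}$, by \eqref{Gammabounds} the function $V(\cdot,s)$ decays like a Gaussian as $|x| \to \infty$ whenever $s < T$, and by construction $V$ solves the backward equation $\partial_s V + \div(A_\infty\nabla V) = 0$. Pairing the equation satisfied by $u$ against $V$ on the slab $\R^n \times [\epsilon,T]$ and integrating by parts gives the identity
\[
  \int_{\R^n} u(x,T)\,\eta(x)\,dx \,=\, \int_{\R^n} u(x,\epsilon)\,V(x,\epsilon)\,dx
\]
for every $\epsilon \in (0,T)$. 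Performing again the change of variable $y = x/\sqrt{\epsilon}$ on the right-hand side yields $\int_{\R^n} w(y)\,V(\sqrt{\epsilon}\,y,\epsilon)\,dy$, and since $V$ is bounded by $\|\eta\|_{L^\infty}$ and is Hölder continuous at $(0,0)$ (by De Giorgi--Nash applied to the parabolic equation governing $V$), dominated convergence gives
\[
  \lim_{\epsilon \to 0^+}\int_{\R^n} u(x,\epsilon)\,V(x,\epsilon)\,dx \,=\, V(0,0)\int_{\R^n} w(y)\,dy \,=\, 0\,.
\]
Hence $\int_{\R^n} u(x,T)\eta(x)\,dx = 0$ for every $\eta \in C^\infty_c(\R^n)$ and every $T > 0$, so $u(\cdot,T) \equiv 0$. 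Evaluating at $T = 1$ gives $w \equiv 0$, i.e.\ $\psi = \phi$.

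The main obstacle is the rigorous justification of the duality identity in Step~2: although $V$ is not compactly supported, its Gaussian decay in $x$ (for $s$ bounded away from $T$) combined with the fact that $u(\cdot,t) \in H^1(\R^n) \cap L^1(\R^n)$ for each fixed $t > 0$ allows one to multiply the weak form of the equation for $u$ by $\chi_R(x) V(x,s)$, integrate over $\R^n \times [\epsilon,T]$, and pass to the limit $R \to \infty$ using the Cauchy--Schwarz inequality. The required regularity $\partial_t u \in L^2_{\mathrm{loc}}(\R^n \times (0,\infty))$ follows from the pointwise identity $\partial_t u = \div(A_\infty\nabla u)$ together with the observation that $\div(A_\infty\nabla w) = -\tfrac12 y\cdot\nabla w - \tfrac{n}{2}w \in L^2_{\mathrm{loc}}(\R^n)$, which is a direct consequence of $w \in H^1$ and $Lw = 0$. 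Once these technicalities are handled, the proof is concluded.
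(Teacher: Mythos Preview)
Your proof is correct and follows essentially the same strategy as the paper: pass to original variables via the self-similar scaling, use the fundamental solution $\Gamma$ to propagate, and exploit that the rescaled profile concentrates to a point mass as $t \to 0^+$, together with the H\"older continuity of $\Gamma$. The paper's presentation is slightly more direct: rather than setting $w = \psi - \phi$ and running a duality argument against a backward solution $V$, it works with $\psi$ itself and establishes the pointwise identity $\psi(x) = \int_{\R^n} \Gamma(x, y\sqrt{t}, 1-t)\,\psi(y)\dd y$ for $t \in (0,1)$ by simply differentiating the right-hand side in $t$ and invoking $L\psi = 0$; taking $t \to 0$ then yields $\psi(x) = \Gamma(x,0,1) = \phi(x)$ at once. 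This sidesteps the cutoff / integration-by-parts justification you flag in your final paragraph, but the underlying mechanism is identical.
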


\begin{proof}
If $\psi$ is as in the statement, we define
\[ 
  u(x,t) \,=\, \frac{1}{t^{n/2}}\,\psi\Bigl(\frac{x}{\sqrt{t}}
  \Bigr)\,, \qquad x \in \R^n\,,\quad t > 0\,.
\]
We claim that, after modifying $\psi$ on a negligible set if needed,
we have the relation
\begin{equation}\label{phi5}
  \psi(x) \,=\, \int_{\R^n} \Gamma\bigl(x,y,1-t\bigr) u(y,t)\dd y
  \,\equiv\, \int_{\R^n} \Gamma\bigl(x,y\sqrt{t},1-t\bigr) \psi(y)\dd y\,,
\end{equation}
for all $x \in \R^n$ and all $t \in (0,1)$. Indeed, if we differentiate
with respect to time the last member of \eqref{phi5}, considered
as a distribution on $\R^n$, we obtain as in Lemma~\ref{philem}
\[
  \frac{\D}{\D t} \int_{\R^n} \Gamma\bigl(x,y\sqrt{t},1-t\bigr)\psi(y)\dd y
  \,=\, -\frac{1}{t}\int_{\R^n} \Gamma\bigl(x,y\sqrt{t},1-t\bigr)
  (L\psi)(y)\dd y \,=\, 0\,.
\]
Thus the first integral in \eqref{phi5} is independent of time,
and converges to $\psi(x)$ in $L^1(\R^n)$ as $t \to 1$, in
view of the properties \eqref{IntGamma}, \eqref{Gammabounds} of the
fundamental solution $\Gamma$. This proves \eqref{phi5}. 

We next take the limit $t \to 0$ in the second member of \eqref{phi5}, 
for a fixed $x \in \R^n$. As $\Gamma$ is H\"older continuous and satisfies
\eqref{Gammabounds}, it is clear that
\[
  \int_{\R^n} \Bigl(\Gamma(x,y,1-t) - \Gamma(x,y,1)\Bigr)
   u(y,t)\dd y \,\xrightarrow[~t \to 0~]{}\, 0\,.
\]
Moreover $u(\cdot,t) \rightharpoonup \delta_0$ (the Dirac measure
at the origin) as $t \to 0$, so that 
\[
  \int_{\R^n} \Gamma(x,y,1) u(y,t)\dd y \,\xrightarrow[~t \to 0~]{}\,
  \Gamma(x,0,1) \,=\, \phi(x)\,.
\]
We conclude that $\psi(x) = \phi(x)$ for (almost) all $x \in \R^n$. 
\end{proof}

The following properties of the derivatives of $\phi$ will be
useful. 

\begin{prop}\label{gradphi}
If $\phi$ is defined by \eqref{phi2}, then $|\nabla \phi| \in
L^2(m)$ for all $m \in \N$. In addition we have $\nabla \phi
\in L^q(\R^n)$ for $2 \le q < n/(1-\beta)$, where $\beta$ is as
in Proposition~\ref{GHolder} or \ref{GHolder2d}. 
\end{prop}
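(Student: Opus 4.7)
The proposition has two parts of different character: the $L^2(m)$ bound on $\nabla\phi$ is a global weighted energy estimate, while the $L^q$ bound hinges on the pointwise behavior of $\nabla\phi$ near the origin, where $A_\infty$ loses regularity.

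For $|\nabla\phi|\in L^2(m)$, my plan is a weighted energy estimate. Since $\phi\in D(L)$ satisfies $L\phi=0$, I would multiply by $\phi(1+|x|^2)^m$ and integrate over $\R^n$, inserting a smooth cutoff $\chi_k(x)=\chi(x/k)$ to justify the manipulations and passing to the limit using the Gaussian bounds \eqref{phibounds}. Integration by parts on the principal term produces $\int(1+|x|^2)^m(A_\infty\nabla\phi,\nabla\phi)\dd x$ together with a cross term $-2m\int(1+|x|^2)^{m-1}\phi(A_\infty\nabla\phi,x)\dd x$, while combining the drift $\frac12 x\cdot\nabla\phi$ with the zero-order term via $\phi\,x\cdot\nabla\phi=\frac12 x\cdot\nabla(\phi^2)$ plus one more integration by parts yields a contribution dominated by $C_m\int(1+|x|^2)^m\phi^2\dd x$. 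Using the uniform ellipticity \eqref{Aelliptic} and Young's inequality to absorb the cross term into the principal quadratic form, one obtains
\begin{equation*}
  \int_{\R^n}(1+|x|^2)^m|\nabla\phi|^2\dd x \,\le\, C_m\int_{\R^n}(1+|x|^2)^m|\phi|^2\dd x,
\end{equation*}
which is finite for every $m\in\N$ by \eqref{phibounds}.

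For the $L^q$ bound, the plan is to establish the pointwise estimates $|\nabla\phi(x)|\le C|x|^{\beta-1}$ for $0<|x|\le 1$ and $|\nabla\phi(x)|\le Ce^{-|x|^2/C}$ for $|x|\ge 1$, from which $\nabla\phi\in L^q(\R^n)$ iff $\int_{|x|\le 1}|x|^{(\beta-1)q}\dd x<\infty$, that is $q<n/(1-\beta)$. Both bounds come from a local rescaling argument. For the harder case $0<r:=|x_0|\le 1$, I set $\rho=r/4$ and consider $\tilde\phi(y)=\phi(x_0+\rho y)-\phi(0)$ on $B(0,2)$. Substituting in $L\phi=0$ yields
\begin{equation*}
  \div_y\bigl(\tilde A(y)\nabla_y\tilde\phi\bigr) + \frac{\rho}{2}(x_0+\rho y)\cdot\nabla_y\tilde\phi + \frac{n\rho^2}{2}\tilde\phi \,=\, -\frac{n\rho^2}{2}\phi(0),
\end{equation*}
with $\tilde A(y)=A_\infty(x_0+\rho y)$. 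On $B(0,2)$ we have $|x_0+\rho y|\ge r/2$, so by \eqref{gradA} the Lipschitz constant of $\tilde A$ is bounded by $\rho\cdot(2C/r)=C/2$ uniformly in $r\le 1$, while the lower-order coefficients and the right-hand side are $O(r^2)$. Hölder continuity of $\phi$ at the origin with exponent $\beta$ (see below) gives $\|\tilde\phi\|_{L^\infty(B(0,2))}\le Cr^\beta$, and standard interior $C^{1,\alpha}$ estimates for divergence-form elliptic equations with Lipschitz principal coefficients yield $\|\nabla\tilde\phi\|_{L^\infty(B(0,1))}\le C(\|\tilde\phi\|_{L^\infty(B(0,2))}+r^2)\le Cr^\beta$. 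Since $\nabla\tilde\phi(0)=\rho\nabla\phi(x_0)$, this gives $|\nabla\phi(x_0)|\le Cr^{\beta-1}$. For $|x_0|\ge 1$ an analogous rescaling with $\rho=1/|x_0|$ makes all rescaled coefficients uniformly bounded on $B(0,2)$, and combined with the Gaussian upper bound on $\phi$ in \eqref{phibounds} it produces the desired $|\nabla\phi(x_0)|\le Ce^{-|x_0|^2/C}$.

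The main technical ingredient is that the Hölder exponent of $\phi$ at the origin is at least the $\beta$ appearing in Prop.~\ref{GHolder}. Proposition~\ref{phiprop} only asserts that $\phi$ is Hölder continuous, without a specific exponent; the sharper estimate $|\phi(x)-\phi(0)|\le C|x|^\beta$ rests on De Giorgi--Nash theory, whose Hölder exponent depends only on the ellipticity constants of the principal part $H=-\div(A_\infty\nabla\,\cdot\,)$ and therefore coincides with the exponent governing the Green function in Prop.~\ref{GHolder}. Once this is granted, the genuine obstacle — making sense of the drift $\frac12 x\cdot\nabla\phi$, which is unbounded both as a coefficient and as a product — is neutralized by the choice of rescaling: for small $r$ it contributes only an $O(r^2)$ coefficient, and for large $|x_0|$ it becomes a bounded coefficient after dividing the equation through by $|x_0|^2$, so in each regime standard interior estimates apply without any bootstrap on $\nabla\phi$.
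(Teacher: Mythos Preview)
Your proof of the first part follows the same weighted energy argument as the paper. For the $L^q$ bound your route genuinely differs: the paper writes $\nabla\phi(x)=\int\nabla_xG(x,y)f(y)\dd y$ with $f=\frac12 y\cdot\nabla\phi+\frac n2\phi$, bounds this integral operator from $L^p$ to $L^q$ via the gradient estimate \eqref{Gbd2} and Proposition~\ref{propkernel} (this is Lemma~\ref{lemme_troiscas}), and then bootstraps because $f$ itself contains $\nabla\phi$. You instead obtain the pointwise bounds $|\nabla\phi(x)|\le C|x|^{\beta-1}$ near~$0$ and Gaussian decay at infinity by local rescaling plus the interior gradient estimate (essentially Lemma~\ref{auxlem}), feeding in $\|\tilde\phi\|_{L^\infty}\le Cr^\beta$. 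This gives stronger pointwise information and sidesteps the bootstrap entirely. The one point needing more care is that $\phi$ be $C^\beta$ at~$0$ with the \emph{same} $\beta$ as in Proposition~\ref{GHolder}; your De~Giorgi--Nash justification is correct but becomes clean only after writing $L\phi=0$ as $\div\bigl(A_\infty\nabla\phi+\frac12 x\phi\bigr)=0$, so that the drift enters as a bounded flux and the H\"older exponent is governed by the ellipticity constants alone. (Minor fix: with $\rho=1/|x_0|$ the ball $x_0+\rho B(0,2)$ may contain the origin when $|x_0|$ is close to~$1$; treat a compact shell with a fixed~$\rho$ separately.)
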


\begin{proof}
Let $\chi : \R^n \to [0,1]$ be a smooth and compactly supported
function such that $\chi(x) = 1$ if $|x| \le 1$. We also assume
that $\chi$ is radially symmetric and satisfies $x \cdot \nabla
\chi(x) \le 0$ for all $x \in \R^n$. Given any $m \in \N$, we
introduce for each $k \in \N^*$ the truncated weight function
\[
  p_k(x) \,=\, |x|^{2m} \chi(x/k)\,, \qquad x \in \R^n\,.
\]
We now multiply both sides of \eqref{phi4} by $p_k \phi$ and
integrate the resulting equality over $x \in \R^n$. After
integrating by parts, we obtain the relation
\[
  \int_{\R^n} p_k \bigl(\nabla \phi, A_\infty \nabla \phi\bigr)
  \dd x +  \int_{\R^n} \phi \bigl(\nabla p_k, A_\infty \nabla \phi\bigr)
  \dd x \,=\, \frac{n}{4} \int_{\R^n} p_k \phi^2 \dd x -
  \frac14 \int_{\R^n} (x\cdot\nabla p_k) \phi^2 \dd x\,,
\]
and using the ellipticity assumption \eqref{Aelliptic} we deduce
that
\begin{equation}\label{phi6}
  \lambda_1 \int_{\R^n} p_k |\nabla \phi|^2 \dd x \,\le\,
  \lambda_2 \int_{\R^n} \phi \,|\nabla p_k| |\nabla \phi| \dd x
  + \frac{n}{4} \int_{\R^n} p_k \phi^2 \dd x - \frac14
  \int_{\R^n} (x\cdot\nabla p_k) \phi^2 \dd x\,.
\end{equation}
As $\phi$ satisfies the Gaussian bound \eqref{phibounds}, we have
$\int p_k \phi^2 \dd x \to \int |x|^{2m} \phi^2 \dd x$ as $k \to \infty$.
To control the other terms in the right-hand side of \eqref{phi6},
we observe that
\[
  \nabla p_k(x) \,=\, 2m x |x|^{2m-2} \chi(x/k) + \frac{1}{k}
  \,|x|^{2m} \nabla \chi(x/k)\,, 
\]
from which we infer
\begin{align*}
  \int_{\R^n} \phi \,|\nabla p_k| |\nabla \phi| \dd x \,&\xrightarrow[~k
  \to \infty~]{}\, 2m \int_{\R^n} |x|^{2m-1}\phi |\nabla \phi| \dd x\,, \\
  \int_{\R^n} (x\cdot\nabla p_k) \phi^2 \dd x  \,&\xrightarrow[~k
  \to \infty~]{}\, 2m \int_{\R^n} |x|^{2m} \phi^2 \dd x\,.
\end{align*}
Thus taking the limit $k \to \infty$ in \eqref{phi6} and using
the monotone convergence theorem, we conclude that
\[
  \lambda_1 \int_{\R^n} |x|^{2m} |\nabla \phi|^2 \dd x \,\le\,
  2m\lambda_2 \int_{\R^n} |x|^{2m-1}\phi |\nabla \phi| \dd x
  + \Bigl(\frac{n}{4} - \frac{m}{2}\Bigr)\int_{\R^n} |x|^{2m}
  \phi^2 \dd x \,<\, \infty\,.
\]
This shows that $|x|^m |\nabla \phi| \in L^2(\R^n)$ for all $m \in \N$,
hence $|\nabla \phi| \in L^2(m)$ for all $m \in \N$. 

We next prove the second assertion in Proposition~\ref{gradphi}. We
know from \eqref{phi4} that $\phi$ satisfies the elliptic equation
\eqref{uell} with $f(x) = \frac12 x \cdot \nabla \phi(x) + \frac{n}{2}
\phi(x) = \frac12\div(x \phi)$. We thus have the representation
\eqref{urep}, which is valid even in the two-dimensional case because
$\int f(x) \dd x = 0$. Differentiating both sides of \eqref{urep} we
obtain
\begin{equation}\label{nablaphi}
  \nabla \phi(x) \,=\, \int_{\R^n} \nabla_x G(x,y)f(y)\dd y \,=\, 
  \int_{\R^n} \nabla_x G(x,y) \bigl({\TS \frac12} y \cdot \nabla
  \phi(y) + {\TS \frac{n}{2}} \phi(y)\bigr)\dd y\,,     
\end{equation}
for (almost) all $x \in \R^n$. This relation allows us to estimate
$\nabla \phi$ in $L^p(\R^n)$ for some $p > 2$ using the following lemma,
which is proved below. 

\begin{lem}\label{lemme_troiscas}
Let $p\in \big(1,\frac n{2-\beta}\big)$ where $\beta\in(0,1)$ is as in
\eqref{Gbd2}. If $f\in L^p(\R^n)$, the function $g$ defined by 
$g(x)=\int_{\R^n} \nabla_x G(x,y) f(y) \dd y$ belongs to $L^q(\R^n)$ with
$q$ such that $\frac 1q + \frac 1n = \frac 1p$.
\end{lem}

Let $p_* = n/(2{-}\beta)$ and $q_* = n/(1{-}\beta)$. We first assume
that $p_* \le 2$, which means that either $n = 2$, or $n = 3$ and
$\beta\le 1/2$. We know from \eqref{phibounds} and from the previous
step that $f \in L^2(m)$ for all $m \in \N$, hence $f \in L^p(\R^n)$
for all $p \in [1,2]$. We can thus apply Lemma~\ref{lemme_troiscas} to
\eqref{nablaphi} for any $p\in (1,p_*)$, and we obtain that $\nabla \phi$
belongs to $L^q(\R^n)$ for any $q \in (2,q_*)$, which gives the desired
conclusion.

We next consider the case where $p_* > 2$, which requires a bootstrap
argument.  For any $j \in \N$ with $j < n/2$, we denote $p_j =
2n/(n{-}2j)$, and we observe that $1/p_j = 1/n + 1/p_{j+1}$. As
before, we start with the knowledge that $f\in L^p(\R^n)$ for all $p
\in [1,2] \equiv [1,p_0]$, and a first application of
Lemma~\ref{lemme_troiscas} to \eqref{nablaphi} shows that $\nabla
\phi$ belongs to $L^q(\R^n)$ for all $q \in (2,p_1)$. Since we also
know that $|x|^m \nabla\phi \in L^2(\R^n)$ for any $m \in \N$, we
obtain by interpolation that $y \cdot \nabla \phi\in L^{r}(\R^n)$ for
any $r \in (2,q)$; in particular, we have shown that $f \in L^p(\R^n)$
for all $p \in [1,p_1)$. Repeating the same argument if needed, we
prove inductively that $f \in L^p(\R^n)$ for all $p \in [1,p_j)$
($j = 1,2,\dots$), until we reach the smallest $j \in \N^*$ such that
$p_j \ge p_*$.  At this point we know that $f \in L^p(\R^n)$ for all
$p \in [1,p_*)$, and Lemma~\ref{lemme_troiscas} implies that $\nabla
\phi \in L^q(\R^n)$ for all $q \in (2,q_*)$.
\end{proof}

\noindent{\bf Proof of Lemma~\ref{lemme_troiscas}.} In view of \eqref{Gbd2}, we
have $|g(x)| \le C\bigl(\psi_1(x) + \psi_2(x)\bigr)$ where
\[
  \psi_1(x) \,=\, \int_{\R^n} \frac{1}{|x-y|^{n-1}}\,|f(y)|\dd y\,, \qquad
  \psi_2(x) \,=\, \frac{1}{|x|^{1-\beta}}\int_{\R^n} \frac{1}{
  |x-y|^{n-2+\beta}}\,|f(y)|\dd y\,.
\]
The Hardy-Littlewood-Sobolev inequality directly yields $\psi_1\in
L^q(\R^n)$, see e.g. \cite{LL}.  To control $\psi_2$, we apply
Proposition~\ref{propkernel} with $k(x,y) = |x|^{\beta-1}
|x-y|^{2-n-\beta}$, which is a homogeneous kernel of degree $-d
=-(n-1)$. As $1 + 1/q = 1/p + d/n$ and $p<n/(2-\beta)<n=n/(n-d)$, we
only need to check the condition \eqref{kprop3}, namely
\[
  \int_{\R^n} \left(\frac{1}{|x-y|^{n-2+\beta}\,|y|^{n/q}}\right)^{n/(n-1)}
  \dd y \,<\, \infty\,, \qquad \hbox{for some } x \in \S^{n-1}\,.
\]
Our assumptions on $p$ are equivalent to $\frac n{n-1} < q <\frac n{1-\beta}$,
and these inequalities ensure that the integral above converges for small
$|y|$ and for large $|y|$, respectively. Moreover, the singularity at $y = x$
is integrable because $\beta < 1$, so that $\psi_2\in L^q(\R^n)$ by 
Proposition \ref{propkernel}. \QED

\begin{rem}\label{phirem}
Since the coefficient $A_\infty$ of the operator $L$ is Lipschitz
outside the origin, the classical regularity theory for second order
elliptic equations \cite{GT} implies that any eigenfunction of $L$, in
particular the principal eigenfunction $\phi$, is necessarily of class
$C^{1,\alpha}$ on $\R^n\setminus\{0\}$ for some $\alpha > 0$. However,
the example studied in Section~\ref{ssec32} shows that $\nabla \phi$
may have a singularity at the origin, as it is the case for the
function $\psi_2$ in the above proof. This indicates that estimate
\eqref{Gbd2} for the Green function cannot be substantially improved
in general. 
\end{rem}

\begin{rem}\label{symmetrization}
In the constant coefficient case, the relation \eqref{Lconj1} shows
that the operator $L$ is formally conjugated to a selfadjoint
operator. Such a property is not known to hold in general, but
the following observation can be made. If $\Phi : \R^n \to \R$ has
bounded second-order derivatives, a direct calculation shows that
\begin{equation}\label{Lconj2}
  e^{\Phi} L \bigl(e^{-\Phi} u\bigr) \,=\, \div(A_\infty \nabla u)
  + \frac{n}{2}\,u + V_\Phi \cdot \nabla u + W_\Phi u\,,
\end{equation}
for all $u \in C^2_c(\R^n)$, where the functions $V_\Phi$ and
$W_\Phi$ are given by
\[
  V_\Phi \,=\, \frac{x}{2} - 2 A_\infty \nabla \Phi\,, \qquad
  W_\Phi \,=\, \bigl(A_\infty\nabla \Phi,\nabla\Phi\bigr)
  - \div(A_\infty \nabla \Phi) - \frac{x}{2}\cdot\nabla \Phi\,.
\]
The conjugated operator \eqref{Lconj2} is symmetric in $L^2(\R^n)$ if
$V_\Phi = 0$, namely if $A_\infty \nabla \Phi = x/4$. For a general
matrix $A_\infty(x)$ satisfying the assumptions listed in
Section~\ref{sec2}, there is no function $\Phi$ with that property.
However, if we assume that $A_\infty(x)x = x$ for all $x \in \R^n$,
which is the case for the Meyers-Serrin matrix \eqref{MSexample}, 
we can take $\Phi(x) = |x|^2/8$ and we obtain, in close analogy
with \eqref{Lconj1}, 
\[
  e^{|x|^2/8} L \bigl(e^{-|x|^2/8} u\bigr) \,=\, \div(A_\infty \nabla u)
  - \frac{|x|^2}{16}\,u + \frac{n}{4}\,u\,.
\]
Note that, in that situation, we also have $L \phi = 0$ where $\phi$
is given by \eqref{phi1}. 
\end{rem}

\begin{rem}\label{principal}
It is interesting to note that, in general, the principal eigenfunction
of the operator $L$  is not given by the explicit expression
\eqref{phi1}. In fact, let $B : \R^n \to \cM_n(\R)$ be a matrix valued
function that is homogeneous of degree zero, smooth outside the origin,
symmetric and uniformly elliptic in the sense of \eqref{Aelliptic}.
We want to determine under which additional conditions the function
$\phi : \R^n \to \R$ defined by
\begin{equation}\label{phi7}
  \phi(x) \,=\, \exp\Bigl(-\frac{1}{4}
  \bigl(B(x)x,x\bigr)\Bigr)\,, \qquad x \in \R^n\,,
\end{equation}
is (up to normalization) the principal eigenfunction of the operator
$L$ for some appropriate choice of the diffusion matrix $A_\infty$.
This is certainly the case if we can construct $A_\infty$ in such
a way that $A_\infty(x)\nabla\phi(x) + \frac{x}{2}\phi(x) = 0$
for all $x \in \R^n$, because the desired property $L\phi = 0$
then follows by taking the divergence with respect to the variable
$x$. In view of \eqref{phi7}, the condition on $A_\infty$ becomes
\begin{equation}\label{phi8}
  A_\infty(x) B(x) x + \frac12 A_\infty(x) \bigl(\nabla B(x)x,x\bigr)
  \,=\, x\,,\qquad x \in \R^n\,, 
\end{equation}
where $\bigl(\nabla B(x)x,x\bigr) \in \R^n$ denotes the vector with
components $\bigl(\partial_j B(x)x,x\bigr)$ for $j = 1,\dots,n$.
Consider the matrix $M(x)$ and the vectors $\zeta(x), \xi(x)$
defined as follows\:
\[
  M(x) \,=\, B(x)^{1/2} A_\infty(x) B(x)^{1/2}\,, \quad
  \zeta(x) \,=\, B(x)^{1/2}x\,, \quad \xi(x) \,=\,
  \frac12 B(x)^{-1/2}\bigl(\nabla B(x)x,x\bigr)\,.
\]
Then our condition \eqref{phi8} can be written in the equivalent
form
\begin{equation}\label{phi9}
  M(x) \zeta(x) + M(x) \xi(x) \,=\, \zeta(x)\,, \qquad x \in \R^n\,.
\end{equation}
Moreover, we observe that
\[
  2\bigl(\zeta(x),\xi(x)\bigr) \,=\, \sum_{j=1}^n x_j
  \bigl((\partial_jB)x,x\bigr) \,=\, \Bigl(\Bigl(\sum_{j=1}^n
  x_j \partial_j B\Bigr)x,x\Bigr) \,=\, 0\,,
\]
because $B(x)$ is homogeneous of degree zero; we deduce that $\zeta(x)
\bot \xi(x)$ for all $x \in \R^n$. Now, if $M(x)$ is the symmetric
matrix with components $M_{ij}(x)$ defined by
\[
  M_{ij}(x) \,=\, \frac{|\xi(x)|^2}{|\zeta(x)|^4}\,\zeta_i(x)\zeta_j(x)
  - \frac{1}{|\zeta(x)|^2}\,\bigl(\zeta_i(x)\xi_j(x) + \xi_i(x)\zeta_j(x)
  \bigr) + \delta_{ij}\,, \qquad x \in \R^n\setminus\{0\}\,,
\]
it is straightforward to verify that \eqref{phi9} hold for all $x \in
\R^n$, and that the map $x \mapsto M(x)$ is homogeneous of degree
zero. Moreover, the matrix $M(x)$ is positive definite if we assume
that $|\xi(x)| \le \kappa |\zeta(x)|$ for some $\kappa < 1$, which
is the case if $\nabla B$ is sufficiently small compared to $B$
on the unit sphere $\S^{n-1}$. Under that assumption, if we set
$A_\infty(x) = B(x)^{-1/2} M(x) B(x)^{-1/2}$, we conclude that $A_\infty$
satisfies the assumptions listed in Section~\ref{sec2} and that the
operator $L$ defined by \eqref{Ldef} has the property that $L\phi = 0$,
where $\phi$ is defined by \eqref{phi7}.
\end{rem}


\section{Long-time asymptotics in the linear case}\label{sec4}

This section is devoted to the proof of Theorem~\ref{main1}.
We start from the rescaled equation \eqref{veq} with $\cN = 0$,
namely
\begin{equation}\label{veqlin}
  \partial_\tau v \,=\, \div\Bigl(A\bigl(ye^{\tau/2}\bigr)\nabla v\Bigr)
  + \frac{1}{2}\,y\cdot\nabla v + \frac{n}{2}\,v\,,
  \qquad y \in \R^n\,,\quad \tau > 0\,,
\end{equation}
and we consider it as an evolution equation in the weighted
space $L^2(m)$ defined in \eqref{L2mdef}. 

\begin{lem}\label{wellposed}
For any $m \ge 0$, the Cauchy problem for Eq.~\eqref{veqlin} is
globally well-posed in $L^2(m)$. 
\end{lem}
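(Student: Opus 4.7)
The strategy is to combine semigroup theory for the autonomous linear diffusion equation with a weighted energy estimate. First, the self-similar change of variables $v(y,\tau) = e^{n\tau/2} u(ye^{\tau/2}, e^\tau - 1)$ converts \eqref{veqlin} into the autonomous equation $\partial_t u = \div(A(x)\nabla u)$, with the same initial datum $u_0 = v_0$ (since the transformation reduces to the identity at $\tau = t = 0$). By the argument used in Proposition~\ref{prop_domain} (which needs only symmetry, boundedness and uniform ellipticity of the coefficients), the operator $H_A u = -\div(A(x)\nabla u)$ is self-adjoint and non-negative on $L^2(\R^n)$, hence generates an analytic contraction semigroup. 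This yields a unique global solution $u \in C^0([0,+\infty), L^2(\R^n))$, and therefore $v \in C^0([0,+\infty), L^2(\R^n))$, which settles the case $m = 0$.

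For $m > 0$, I would derive a weighted energy inequality directly from \eqref{veqlin}. Testing the equation against $(1+|y|^2)^m v$ and integrating by parts, the principal part yields $-\int (1+|y|^2)^m (A_\tau \nabla v, \nabla v) \dd y \le -\lambda_1 \int (1+|y|^2)^m |\nabla v|^2 \dd y$ by \eqref{Aelliptic}, where $A_\tau(y) := A(ye^{\tau/2})$. The cross term $-\int v(A_\tau \nabla v, \nabla(1+|y|^2)^m)\dd y$ is controlled, via $|\nabla(1+|y|^2)^m|\le 2m|y|(1+|y|^2)^{m-1}$, $\|A_\tau\|\le \lambda_2$, and Young's inequality, by $\epsilon \int (1+|y|^2)^m |\nabla v|^2 \dd y + C_\epsilon \|v\|_{L^2(m)}^2$. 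The drift contribution $\tfrac{1}{2}\int (1+|y|^2)^m v\,(y\cdot \nabla v)\dd y$, after one further integration by parts using $v(y\cdot \nabla v) = \tfrac{1}{2}y\cdot \nabla(v^2)$, equals $-\tfrac{n}{4}\|v\|_{L^2(m)}^2 - \tfrac{m}{2}\int |y|^2(1+|y|^2)^{m-1} v^2 \dd y$, which is non-positive. Choosing $\epsilon < \lambda_1$ and absorbing the gradient terms, I obtain
\[
  \frac{d}{d\tau}\|v(\tau)\|_{L^2(m)}^2 \,\le\, C_m\,\|v(\tau)\|_{L^2(m)}^2\,,
\]
and Gronwall's lemma then gives $\|v(\tau)\|_{L^2(m)} \le e^{C_m \tau/2}\|v_0\|_{L^2(m)}$ for all $\tau \ge 0$, precluding blow-up of the weighted norm.

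The main obstacle is the rigorous justification of this weighted energy identity, since a priori the solution produced in the first step lies only in $L^2(\R^n)$ and the computation involves multipliers growing like $|y|^{2m}$. I would address this by performing the calculation with the truncated weight $(1+|y|^2)^m \chi(y/R)$, where $\chi \in C^\infty_c(\R^n)$ equals $1$ on the unit ball, and then letting $R \to +\infty$. Parabolic regularity ensures that $v(\tau) \in H^1_{\mathrm{loc}}(\R^n)$ for $\tau > 0$, so the integrations by parts are legitimate at finite $R$; the extra terms produced by $\nabla \chi(y/R)$ are supported on the annulus $R \le |y| \le 2R$, and the $L^2(\R^n)$ bound from the first step shows that they vanish in the limit, giving the inequality above for data in $L^2(m)\cap L^2(\R^n) = L^2(m)$. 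Uniqueness in $L^2(m)$ is automatic from uniqueness in $L^2(\R^n)$, since $L^2(m)\hookrightarrow L^2(\R^n)$ for $m\ge 0$.
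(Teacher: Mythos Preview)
Your overall strategy is reasonable and differs from the paper's. The paper does not use a weighted energy estimate with truncation; instead it conjugates the autonomous equation $\partial_t u = \div(A\nabla u)$ by the weight $p(x)=(1+|x|^2)^{-m/2}$, writing $u = p\,\tilde u$. The function $\tilde u$ then satisfies an equation that differs from the original one by lower-order terms with bounded coefficients (a first-order term with coefficient $2p^{-1}A\nabla p$ and a zeroth-order term with coefficient $p^{-1}\div(A\nabla p)$), so by standard perturbation theory for analytic semigroups \cite[Section~3.2]{Pa} the modified equation generates an analytic semigroup on $L^2(\R^n)$. This says exactly that \eqref{uplain} generates an analytic semigroup on $L^2(m)$, and the change of variables \eqref{self-similar} transports this to \eqref{veqlin}. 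The conjugation trick avoids the truncation altogether and gives analyticity (hence the regularity statements used later) for free.

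Your argument, by contrast, has a genuine gap in the truncation step. With $p_R(y)=(1+|y|^2)^m\chi(y/R)$, the boundary terms produced by $\nabla\chi(\cdot/R)$ live on the annulus $R\le |y|\le 2R$ and carry a weight of order $R^{2m}$. For instance, the drift contribution produces the term $-\tfrac14\int (1+|y|^2)^m R^{-1}\,y\cdot\nabla\chi(y/R)\,v^2\dd y$, which is nonnegative (for radially nonincreasing $\chi$) and of size $\sim R^{2m}\int_{R\le|y|\le 2R}v^2\dd y$; the diffusion cross term gives an analogous contribution of order $R^{2m-1}\int_{R\le|y|\le 2R}|v|\,|\nabla v|\dd y$. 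Knowing only $v(\tau)\in L^2(\R^n)$ and $\nabla v(\tau)\in L^2_{\loc}$ does \emph{not} force these quantities to vanish as $R\to\infty$ when $m>0$; the annulus integrals go to zero, but not fast enough to beat the prefactor $R^{2m}$. So the sentence ``the $L^2(\R^n)$ bound from the first step shows that they vanish in the limit'' is not justified.

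The standard repair is a density argument: first take $v_0$ smooth with compact support, so that the corresponding solution $u$ of \eqref{uplain} has Gaussian decay (by the Aronson bounds \eqref{Gammabounds}) and hence $v(\tau)\in L^2(m')$ for every $m'$; for such data the truncation errors do vanish and your Gronwall bound $\|v(\tau)\|_{L^2(m)}\le e^{C_m\tau/2}\|v_0\|_{L^2(m)}$ is rigorous. Then approximate a general $v_0\in L^2(m)$ by such data and pass to the limit, using that the estimate is uniform. Alternatively, adopt the paper's conjugation approach, which sidesteps the issue entirely.
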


\begin{proof}
That statement, as well as all subsequent claims regarding existence
and regularity of solutions to \eqref{veqlin}, can be justified by
the following standard arguments. If we undo the change of variables
\eqref{self-similar}, we obtain the linear diffusion equation
\eqref{diffeq} with $N=0$, namely
\begin{equation}\label{uplain}
  \partial_t u(x,t) \,=\, \div \bigl(A(x)\nabla u(x,t)\bigr)\,,
  \qquad x \in \R^n\,,\quad t > 0\,,
\end{equation}
which is known to define an analytic evolution semigroup in the
Hilbert space $L^2(\R^n)$, see Section~\ref{sec2} for a similar
analysis. We set $u(x,t) = p(x) \tilde u(x,t)$, where
$p(x) = (1{+}|x|^2)^{-m/2}$. The new function $\tilde u$ then
satisfies the modified evolution equation
\begin{equation}\label{tildeu}
  \partial_t \tilde u \,=\, \div \bigl(A(x)\nabla \tilde u\bigr)
  + \frac{2}{p}\,\bigl(\nabla p\,,\, A(x)\nabla \tilde u\bigr)
  + \frac{1}{p}\,\div \bigl(A(x)\nabla p\bigr)\tilde u\,,
\end{equation}
which differs from \eqref{uplain} by a relatively compact
perturbation, in the sense of operator theory. It follows
\cite[Section~3.2]{Pa} that \eqref{tildeu} defines an analytic
semigroup in $L^2(\R^n)$, which amounts to saying that \eqref{uplain}
defines an analytic semigroup in $L^2(m)$. In particular,
given initial data $u_0 \in L^2(m)$, Eq.~\eqref{uplain} has a
unique solution $u \in C^0([0,+\infty),L^2(m)) \cap C^1((0,+\infty),
L^2(m))$ such that $u(0) = u_0$. Moreover $\nabla u \in C^0((0,+\infty),
L^2(m)^n) \cap L^2((0,T),L^2(m)^n)$ for any ${T > 0}$. Applying
now the change of variables \eqref{self-similar}, which leaves
the space $L^2(m)$ invariant, we conclude in particular that,
given initial data $v_0 \in L^2(m)$, equation \eqref{veqlin} has a
unique global solution $v \in C^0([0,+\infty),L^2(m))$ such
that $v(0) = v_0$. 
\end{proof}

\subsection{Spectral decomposition of the solution}\label{ssec41}

We assume from now on that $m > n/2$, so that $L^2(m) \hookrightarrow
L^1(\R^n)$. If $v \in C^0([0,+\infty),L^2(m))$ is a solution of
\eqref{veqlin} with initial data $v_0 \in L^2(m)$, we observe that
\begin{equation}\label{intconserved}
  \int_{\R^n} v(y,\tau)\dd y \,=\, \int_{\R^n} v_0(y)\dd y\,,
  \qquad \hbox{for all }\tau \ge 0\,.
\end{equation}
Indeed, if $u \in C^0([0,+\infty),L^2(m)) \cap C^1((0,+\infty),L^2(m))$
is the corresponding solution of \eqref{uplain}, we have
\[
  \frac{\D}{\D t}\int_{\R^n} u(x,t) \dd x \,=\,
  \int_{\R^n} \div \bigl(A(x)\nabla u(x,t)\bigr)\dd x \,=\, 0\,,
  \qquad \hbox{for all } t  > 0\,,
\]
where the last equality follows from Lemma~\ref{intlem} since $\div
\bigl(A\nabla u\bigr) \in L^2(m)$ and $A\nabla u \in L^2(m)^n$ for any
$t > 0$.  It follows that the integral of $u(\cdot,t)$ does not depend
on time, and the same property holds for the rescaled function
$v(\cdot,\tau)$ in view of \eqref{self-similar}. This gives
\eqref{intconserved}.

We also recall that, in view of \eqref{Alimits} and \eqref{Aunif}, the diffusion
matrix $A$ can be decomposed as
\begin{equation}\label{ABdecomp}
  A(x) \,=\, A_\infty(x) + B(x)\,, \qquad x \in \R^n\,,
\end{equation}
where $A_\infty$ is homogeneous of degree zero and the remainder $B$ satisfies
\begin{equation}\label{Bestimate}
  \sup_{x \in \R^n} (1+|x|)^\nu \,\|B(x)\| \,<\, \infty\,, \qquad \hbox{for some }
  \nu > 0\,.
\end{equation}
Let $L$ be the limiting operator \eqref{Ldef}, and $\phi \in L^2(m)$ be the
principal eigenfunction of $L$ given by Proposition~\ref{phiprop}. We
decompose the solution of \eqref{veqlin} in the following way\:
\begin{equation}\label{vdecomp}
  v(y,\tau) \,=\, \alpha \phi(y) + w(y,\tau)\,, \qquad
  \hbox{where} \qquad \alpha \,=\, \int_{\R^n} v(y,\tau)\dd y\,.
\end{equation}  
Since $\phi$ is normalized so that $\int_{\R^n}\phi \dd y = 1$, it
follows from \eqref{vdecomp} that $\int_{\R^n} w(y,\tau) \dd y = 0$
for all $\tau \ge 0$. Moreover, in view of \eqref{veqlin} and
\eqref{phieq}, the evolution equation satisfied by $w$ is
\begin{equation}\label{weqlin}
  \partial_\tau w \,=\, \div\Bigl(A\bigl(ye^{\tau/2}\bigr)\nabla w\Bigr)
  + \frac{1}{2}\,y\cdot\nabla w + \frac{n}{2}\,w + r_1\,,
  \qquad y \in \R^n\,,\quad \tau > 0\,,
\end{equation}
where
\begin{equation}\label{r1def}
  r_1(y,\tau) \,=\, \alpha \div\bigl(B(y e^{\tau/2})\nabla \phi(y)\bigr)\,.
\end{equation}

\begin{rem}\label{remdecomp}
As simple as it may seem, the decomposition \eqref{vdecomp} is an
essential step in the proof of Theorem~\ref{main1}. To understand its
meaning, let us assume for the moment that the solutions of \eqref{veqlin}
are well approximated, for large times, by those of the limiting equation
\eqref{veqlimit}; this is certainly expected in view of \eqref{ABdecomp},
\eqref{Bestimate}. So our task is to understand the long-time behavior
of the semigroup $e^{\tau L}$ generated by the limiting operator \eqref{Ldef}.
In the weighted space $L^2(m)$ with $m > n/2$, we claim that $0$ is a simple
eigenvalue of $L$, and that the rest of the spectrum is contained in the 
half-plane $\{z \in \C\,|\, \Re(z) \le -\mu\}$ for some $\mu > 0$. This
is in fact what Theorem~\ref{main1} asserts in the particular case
where $A = A_\infty$. As is easily verified, the spectral projection $P$ onto
the kernel of $L$ is the map $v \mapsto Pv$ defined by
\[
  (P v)(y) \,=\, \phi(y) \int_{\R^n} v(y)\dd y\,, \qquad y \in \R^n\,.
\]
With this notation, the decomposition \eqref{vdecomp} simply reads $v
= Pv + w$ where $w = (1-P)v$. To prove Theorem~\ref{main1}, our
strategy is to show that the solutions of \eqref{weqlin} in the invariant
subspace $L^2_0(m) \equiv (1-P)L^2(m)$ decay exponentially to zero as
$\tau \to +\infty$, even though the equation for $w$ involves the
time-dependent matrix $A(y e^{\tau/2})$ instead of the limiting matrix
$A_\infty(y)$. As we shall see in the rest of this section, the
exponential decay of $w$ can be established using appropriate energy
estimates.
\end{rem}

\subsection{Weighted estimates for the perturbation}\label{ssec42}

Given any solution $w$ of \eqref{weqlin} in $L^2(m)$, we consider the
energy functional 
\begin{equation}\label{emdef}
  e_{m,\delta}(\tau) \,=\, \frac 12 \int_{\R^n} (\delta+|y|^2)^m w(y,\tau)^2
  \dd y\,, \qquad \tau \ge 0\,,
\end{equation}
where $\delta > 0$ is a parameter that will be fixed later on. This quantity
is differentiable for $\tau > 0$, and using \eqref{weqlin} we find
\begin{align}\nonumber
   \partial_\tau e_{m,\delta}(\tau) \,&=\, \int (\delta+|y|^2)^m w
   \Bigl[\div(A(ye^{\tau/2})\nabla w) + \frac12 (y\cdot\nabla)w +
     \frac{n}{2}w + r_1\Bigr] \dd y \\ \label{em1}
   \,&=\, - \int \Langle \nabla\big((\delta+|y|^2)^m w
   \big),A(ye^{\tau/2})\nabla w\Rangle \dd y + \frac14 \int
   (\delta+|y|^2)^m y\cdot\nabla(w^2) \dd y \\ &\hskip 12pt \nonumber
   \,+\, \frac{n}{2} \int(\delta+|y|^2)^m |w|^2 \dd y -\alpha \int \Langle
   \nabla\big((\delta+|y|^2)^m w \big),B(ye^{\tau/2})\nabla \phi\Rangle\dd y\,,
\end{align}
where the second equality is obtained after integrating by parts and using
the definition \eqref{r1def} of the quantity $r$. Here and in what follows,
it is understood that all integrals are taken over the whole space $\R^n$. 
In view of the elementary identities
\begin{align*}
  \nabla \bigl((\delta+|y|^2)^m\bigr) \,&=\, 2my\,(\delta+|y|^2)^{m-1}\,, \\ 
  \div \bigl(y(\delta+|y|^2)^m\bigr) \,&=\, (n+2m)(\delta+|y|^2)^m -
  2m\delta(\delta+|y|^2)^{m-1}\,,
\end{align*}
we can write \eqref{em1} in the equivalent form
\begin{align}\nonumber
  \partial_\tau e_{m,\delta}(\tau) \,&=\, -\int (\delta+|y|^2)^m \Langle\nabla w,
  A(ye^{\tau/2})\nabla w\Rangle \dd y - 2m \int (\delta+|y|^2)^{m-1} w \Langle
  y,A(ye^{\tau/2})\nabla w\Rangle \dd y \\ \label{em2}
  &\hskip 12pt \,+\, \frac {n{-}2m}{4} \int (\delta+|y|^2)^m |w|^2\dd y
  + \frac{m\delta}2 \int(\delta+|y|^2)^{m-1} |w|^2 \dd y\\ \nonumber 
  \,&-\,\alpha \int (\delta+|y|^2)^m \Langle \nabla w ,B(ye^{\tau/2})
  \nabla \phi\Rangle \dd y - 2\alpha m \int (\delta+|y|^2)^{m-1} w \Langle y ,
  B(ye^{\tau/2})\nabla \phi\rangle \dd y\,.
\end{align}
The last term in the first line of the right-hand side has no obvious
sign, but applying H\"older's inequality we can estimate it as follows\:
\begin{align*}
  2m\Bigl|&\int (\delta+|y|^2)^{m-1} w \Langle y, A(ye^{\tau/2})\nabla
  w\Rangle \dd y\Bigr| \\
  \,&\le\, \frac14 \int (\delta+|y|^2)^m \Langle \nabla w, A(ye^{\tau/2})
  \nabla w\Rangle \dd y + 4m^2 \int (\delta+|y|^2)^{m-2} |w|^2 \Langle y,
  A(ye^{\tau/2}) y \Rangle \dd y \\
  \,&\le\, \frac14 \int (\delta+|y|^2)^m \Langle \nabla w, A(ye^{\tau/2})
  \nabla w\Rangle \dd y + C m^2 \int(\delta+|y|^2)^{m-1} |w|^2 \dd y\,,
\end{align*}
where in the last line we used the obvious fact that $(\delta+|y|^2)^{m-2}|y|^2 \le
(\delta+|y|^2)^{m-1}$. Here and below, we denote by $C$ any positive constant
depending only on the properties of the matrix $A$. We proceed in a similar
way to bound both terms in the last line of \eqref{em2}, and this leads to 
the inequality
\begin{equation}\label{em3}
  \begin{split}
  \partial_\tau e_{m,\delta}(\tau) \,&\le\,  -\frac12 \int
  (\delta+|y|^2)^m \Langle\nabla w,A(ye^{\tau/2})\nabla w\Rangle \dd y + 
  \frac {n{-}2m}{2}\,e_{m,\delta}(\tau)\\
  \,&+\, \bigl(m\delta + C_1 m^2\bigr)\,e_{m-1,\delta}(\tau) + C_2\alpha^2
  \int (\delta+|y|^2)^m \|B(ye^{\tau/2})\|^2|\nabla \phi|^2\dd y\,,
  \end{split}
\end{equation}
for some positive constants $C_1, C_2$. 

\begin{rem}\label{emrem}
If we forget for the moment the last term in \eqref{em3}, assuming thus
that $B\equiv 0$, we have shown that
\begin{equation}\label{em4}
  \partial_\tau e_{m,\delta}(\tau) \,\le\, \frac{n{-}2m}{2}\,e_{m,\delta}(\tau)
  + \bigl(m\delta + C_1 m^2\bigr)\,e_{m-1,\delta}(\tau)\,,
  \qquad \tau > 0\,.
\end{equation}
If $m = 0$, so that $e_{0,\delta}(\tau) = \frac12\|w(\cdot,\tau)\|_{L^2}^2$, the
last term in \eqref{em4} disappears, and we are left with the differential
inequality $\partial_\tau e_{0,\delta} \le (n/2)\,e_{0,\delta}$ which allows for
an exponential growth in time. This is compatible with the spectral
picture in Figure~\ref{fig1}, where the essential spectrum of the operator
$L$ fills the half-plane $\{\Re(z) \le n/4\}$ if $m = 0$. Now, if we assume that
$m > n/2$, the coefficient in front of $e_{m,\delta}$ in the right-hand side of
\eqref{em4} becomes negative, but then we also have the ``lower order term''
proportional to $e_{m-1,\delta}$ which makes it impossible to prove exponential
decay using only \eqref{em4}. The obstacle we hit here is in the nature of
things\: we cannot prove exponential decay in time of the solution of
\eqref{weqlin} if we do not use the crucial fact that $\int_{\R^n} w \dd y = 0$.
\end{rem}

\subsection{Evolution equation for the antiderivative}\label{ssec43}

If we want to study evolutionary PDEs using just $L^2$ energy estimates,
it is not straightforward to exploit the information, if applicable,
that the solutions under consideration have zero mean. In the
one-dimensional case, the following elementary observation was made in
\cite{GR} and applied to the analysis of parabolic or damped hyperbolic
equations\: if $u : \R \to \R$ belongs to $L^2(m)$ for some $m \ge 1$
and has zero mean, the primitive function $U(x) = \int_{-\infty}^x u(y)
\dd y$ is square integrable and satisfies $\|U\|_{L^2} \le 2\|xu\|_{L^2}$
(this is a variant of Hardy's inequality). The idea is then to control
the evolution of the primitive $U$ using $L^2$ energy estimates, and it
turns out that this procedure takes into account the information that
the original function $u$ has zero mean. 

In the same spirit, we propose here an approach that works in
dimensions two and three, and can be extended to cover the
higher-dimensional cases as well (see Section~\ref{ssec45} below).
If $m > n/2$ and $w \in L^2_0(m)$, so that $\int_{\R^n}w(y)\dd y = 0$, the
idea is to define the ``antiderivative'' $W$ of $w$ as the
solution of the elliptic equation
\begin{equation}\label{Welliptic}
  -\div \bigl(A_\infty(y)\nabla W(y)\bigr) \,=\, w(y)\,, \qquad
  y \in \R^n\,.
\end{equation}
More precisely we set $W = K[w]$, where $K$ denotes the integral
operator \eqref{defK} whose kernel is the Green function $G(x,y)$ of
the differential operator in \eqref{Welliptic}, see Section~\ref{ssec25}.
We recall that, if $m \in (n/2,n/2 + \beta)$ where $\beta \in (0,1)$
is defined in Proposition~\ref{GHolder}, then $K$ is a bounded linear
operator from $L^2_0(m)$ to $L^2(m{-}2)$. Moreover, as is shown in
Proposition~\ref{Kernelprop2}, the operator $K$ can be extended so as
to act on first order distributions of the form $w = \div g$, where
$g \in L^2(m{-}1)^n$.

The definition \eqref{Welliptic} has the property that the
antiderivative $W$ satisfies a nice equation if $w$ evolves according
to \eqref{weqlin}.

\begin{lem}\label{Weqlem}
Assume that $m \in (n/2,n/2 + \beta)$, and that $w \in C^0([0,+\infty),
L^2_0(m))$ is a solution of Eq.~\eqref{weqlin}. If we define
$W(\cdot,\tau) = K[w(\cdot,\tau)]$ for $\tau \ge 0$, then $W \in
C^0([0,+\infty),L^2(m{-}2))$ is a solution of the evolution equation
\begin{equation}\label{Weqlin}
  \partial_\tau W \,=\, \div\bigl(A_\infty(y)\nabla W\bigr) + \frac12
  \,y\cdot\nabla W + \frac{n{-}2}{2}\,W + R_1\,,
\end{equation}
where the remainder term $R(y,\tau)$ is given by
\begin{equation}\label{R1def}
  R_1(\cdot,\tau) \,=\, K\left[\div \Bigl(B(\cdot\,e^{\tau/2})(\alpha \nabla
  \phi + \nabla w)\Bigr)\right]\,, \qquad \tau \ge 0\,.
\end{equation}
\end{lem}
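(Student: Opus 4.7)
The plan is to apply the Green operator $K$ of \eqref{defK} term by term to equation \eqref{weqlin}. First, since $\int_{\R^n} w(y,\tau)\dd y = 0$ for all $\tau\ge 0$ (the integral of $v$ is conserved by \eqref{intconserved}, and the decomposition \eqref{vdecomp} absorbs it in $\alpha\phi$ via $\int\phi\dd y = 1$), we have $w(\cdot,\tau)\in L^2_0(m)$. Proposition~\ref{Kernelprop1} then yields a well-defined $W(\cdot,\tau) = K[w(\cdot,\tau)] \in L^2(m{-}2)$ that depends continuously on $\tau$, and since $K$ is a fixed bounded linear operator, it commutes with $\partial_\tau$, giving $\partial_\tau W = K[\partial_\tau w]$ for $\tau>0$ (the time regularity of $w$ coming from the analytic semigroup structure recorded in Lemma~\ref{wellposed}).

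Next, I would split $A(ye^{\tau/2}) = A_\infty(y) + B(ye^{\tau/2})$ and apply $K$ to each piece on the right-hand side of \eqref{weqlin}. The diffusion contribution $K[\div(A_\infty\nabla w)]$ equals $-w$, because $W = K[w]$ satisfies $HW = w$ by construction and hence $\div(A_\infty\nabla W) = -w$; rigorously, the identity $K\circ H = \mathrm{Id}$ on such functions is a consequence of the Dirac-delta property \eqref{GDirac} together with the distributional framework of Corollary~\ref{2Kcor}. The perturbative diffusion piece $\div(B\nabla w)$ merges with the source $r_1 = \alpha\div(B\nabla\phi)$ to form $\div\bigl(B(ye^{\tau/2})(\alpha\nabla\phi + \nabla w)\bigr)$, whose $K$-image is exactly the remainder $R_1$ of \eqref{R1def}.

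The decisive identity is
\begin{equation*}
  K\Bigl[\tfrac12 y\cdot\nabla w + \tfrac{n}{2} w\Bigr] \,=\, \tfrac12 y\cdot\nabla W + \tfrac{n-2}{2} W\,,
\end{equation*}
which encodes the scaling compatibility between $K$ and the drift-dilation operator. To prove it I would exploit the homogeneity of $A_\infty$\: a direct computation using $A_\infty(\lambda y) = A_\infty(y)$ shows that the rescaled function $W_\lambda(y) = W(\lambda y)$ satisfies $(HW_\lambda)(y) = \lambda^2\, w(\lambda y)$, and applying $K$ yields the scaling rule $K[w(\lambda\cdot)](y) = \lambda^{-2} W(\lambda y)$ for every $\lambda > 0$. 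Differentiating at $\lambda = 1$ gives $K[y\cdot\nabla w] = y\cdot\nabla W - 2W$, from which the claim follows by linear combination. Equivalently, the underlying fact is the operator commutation $[H, y\cdot\nabla] = 2H$. Combining the three contributions delivers $\partial_\tau W = -w + \frac12 y\cdot\nabla W + \frac{n-2}{2} W + R_1$, which is exactly \eqref{Weqlin} once we use $-w = \div(A_\infty\nabla W)$.

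The main obstacle is making these formal scaling and inversion manipulations rigorous at the actual Sobolev regularity of $w$. A further subtlety arises in dimension two, where the Green function is only defined up to an additive constant (cf.\ \eqref{Gdef3}) and grows logarithmically; however the ambiguity is killed by the zero-mean property $\int w\dd y = 0$, so the argument carries over. The tools collected in Section~\ref{ssec25}---the weighted bounds for $K$ and $K\circ\div$ (Propositions~\ref{Kernelprop1} and \ref{Kernelprop2}) together with the consistency statement of Corollary~\ref{2Kcor}---are tailored precisely to carry these steps through in the correct distributional sense.
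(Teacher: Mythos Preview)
Your proposal is correct and follows essentially the same strategy as the paper: rewrite \eqref{weqlin} with the decomposition $A=A_\infty+B$, apply $K$ term by term, use $K\circ H=\mathrm{Id}$ for the main diffusion, and package the $B$-terms into $R_1$. The one place where you differ is the drift term. You obtain $K[y\cdot\nabla w]=y\cdot\nabla W-2W$ by differentiating the operator scaling law $K[w(\lambda\cdot)](y)=\lambda^{-2}W(\lambda y)$ at $\lambda=1$; the paper instead writes $\frac12 y\cdot\nabla w+\frac n2 w=\frac12\div(yw)$ and computes $(K\circ\div)[yw]$ directly from the Euler identity $(n-2)G+x\cdot\nabla_xG+y\cdot\nabla_yG=-c_0$ for the kernel (the constant $c_0$, nonzero only when $n=2$, is killed by $\int w=0$). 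These are the infinitesimal and integrated forms of the same homogeneity fact, so the two arguments are equivalent; your version is arguably cleaner conceptually, while the paper's kernel calculation makes the 2D logarithmic correction visible explicitly.
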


\begin{proof}
We rewrite the evolution equation \eqref{weqlin} in the
equivalent form
\begin{equation}\label{weqlin2}
  \partial_\tau w \,=\, \div\Bigl(A_\infty(y)\nabla w\Bigr)
  + \frac{1}{2}\,\div\bigl(y w\bigr) + \tilde r_1\,,
\end{equation}
where $\tilde r_1(y,\tau) = \div \big[B(y e^{\tau/2})\bigl(\alpha
\nabla\phi(y) + \nabla w(y,\tau)\bigr)\bigr]$, and we apply the
linear operator $K$ to both sides of \eqref{weqlin2}. Since $W = K[w]$
and $R_1 = K[\tilde r_1]$ by definition, it remains to treat the first
two terms in the right-hand side, which are in divergence form so
that we can apply Corollary~\ref{2Kcor}. We assume here that
$\nabla w(\cdot,\tau)\in L^2(m)^n$, which is the case as soon as
$\tau > 0$. We make the following observations\:

\smallskip\noindent{\bf 1.}
Let $F = -(K \circ \div)\bigl[A_\infty \nabla w\bigr]$, where
$w \in L^2_0(m)$ and $\nabla w \in L^2(m)^n$. By \eqref{Kcircdef}, we have
\[
  F(x) \,=\, \int_{\R^n} \Bigl(\nabla_y G(x,y) \cdot \bigl(A_\infty(y) 
  \nabla w(y)\bigr)\Bigr) \dd y \,=\, \int_{\R^n} \bigl(A_\infty(y)
  \nabla_y G(x,y)\,, \nabla w(y)\bigr) \dd y\,, 
\]
for (almost) all $x \in \R^n$. If $w \in C^\infty_c(\R^n)$, the right-hand
side is equal to $w(x)$ by \eqref{GDirac}, and using a density argument
we deduce that $F = w$ in the general case. As $w = -\div \bigl(A_\infty
\nabla W\bigr)$ by Remark~\ref{rightinverse}, this gives the elegant
relation $(K\circ \div)\bigl[A_\infty \nabla w\bigr] = \div
\bigl(A_\infty\nabla W\bigr)$. 

\smallskip\noindent{\bf 2.}
As the matrix $A_\infty$ is homogeneous of degree zero, the Green
function $G$ has the following property\: there exists a constant
$c_0 \in \R$ such that, for all $x,y \in \R^n$ with $x \neq y$, 
\begin{equation}\label{GEuler}
  (n-2)G(x,y) + x\cdot\nabla_x G(x,y) + y\cdot\nabla_y G(x,y)
  \,=\, -c_0\,. 
\end{equation}
Indeed, if $n \ge 3$, we have $\lambda^{n-2}G(\lambda x,\lambda y) =
G(x,y)$ for any $\lambda > 0$, and this implies the Euler relation
\eqref{GEuler} with $c_0 = 0$; when $n = 2$, we deduce \eqref{GEuler}
directly from \eqref{Gdef3}. If $w \in L^2_0(m)$ and $g(y) = y w(y)$,
then $g \in L^2(m{-}1)^n$ and, in view of \eqref{Kcircdef} and 
Proposition~\ref{Kernelprop2}, we have
\begin{align*}
  \bigl[K\circ \div g\bigr](x) &=
  -\int \bigl(y\cdot\nabla_y G(x,y) \bigr)w(y)\dd y 
  = \int \bigl(x\cdot\nabla_x G(x,y)+(n-2)G(x,y)+c_0 \bigr)w(y)\dd y\\
  &= \int \Bigl({\div}_x \bigl(x\,G(x,y)\bigr) -2 G(x,y)\Bigr)w(y)\dd y
  = \Bigl(\div\bigl(x K[w]\bigr) - 2 K[w]\Bigr)(x)\,, 
\end{align*}
where we used \eqref{GEuler} and the fact that $\int w \dd y = 0$. After
changing $x$ into $y$, the relation above becomes $(K\circ\div)[yw] =
\div (y W) - 2 W = y \cdot \nabla W + (n{-}2)W$. 

\smallskip\noindent
Summarizing, if apply the operator $K$ to all terms in \eqref{weqlin2}
and use the steps {\bf 1} and {\bf 2} above, we arrive at \eqref{Weqlin}. 
\end{proof}

Notice that Equation~\eqref{Weqlin} is very similar to \eqref{weqlin}, with the
important difference that the ``amplification factor'' $n/2$ in the
right-hand side of \eqref{weqlin} is reduced to $(n{-}2)/2$ in
\eqref{Weqlin}. This makes it possible to control the evolution of
the antiderivative $W$ using energy estimates if $n \le 3$. To this 
end, we introduce the following additional energy functional\:
\begin{equation}\label{Emdef}
  E_{m-2,\delta}(\tau) \,=\, \frac 12 \int_{\R^n} (\delta+|y|^2)^{m-2}
  |W(y,\tau)|^2 \dd y\,, \qquad \tau \ge 0\,.
\end{equation}
Repeating the same calculations as in Section~\ref{ssec42}, we obtain
in analogy with \eqref{em3}\:
\begin{equation}\label{Em1}
  \begin{split}
  \partial_\tau E_{m-2,\delta}(\tau) \,&\le\,  -\frac12 \int (\delta+|y|^2)^{m-2}
  \Langle\nabla W, A_\infty \nabla W\Rangle\dd y \,+\, \frac{n{-}2m}{2}\,
  E_{m-2,\delta}(\tau)\\ 
  \,&+\,\Bigl((m{-}2)\delta + C_1(m{-}2)^2\Bigr) E_{m-3,\delta}(\tau)
  \,+\, \int (\delta+|y|^2)^{m-2} \,W R_1 \dd y\,.
  \end{split}
\end{equation}

\begin{rem}\label{stupidrem}
In the derivation of \eqref{Em1}, the coefficient in front of
$E_{m-2,\delta}(\tau)$ in the right-hand side is obtained through the
elementary calculation
\[
  \frac{n-2m}{4} \,=\, \frac{n-2}{2} - \frac{n + 2(m-2)}{4}\,,
\]
where we observe that the smaller ``amplification factor'' $(n{-}2)/2$
in \eqref{Weqlin} is exactly compensated by the fact that we estimate
the antiderivative $W$ in $L^2(m{-}2)$ instead of $L^2(m)$.  As a
result, we obtain exactly the same coefficient $(n{-}2m)/2$ in both
estimates \eqref{em3} and \eqref{Em1}.
\end{rem}

\subsection{Exponential decay of the perturbation in low 
dimensions}\label{ssec44}

In this section, we assume that $n=2$ or $n=3$, and we combine estimates
\eqref{em3}, \eqref{Em1} to prove that the solutions of \eqref{weqlin}
in $L^2_0(m)$ converge exponentially to zero as $\tau \to +\infty$.
For the moment, we assume that $m \in (n/2,n/2+\beta)$, so that we can
apply Proposition~\ref{Kernelprop1} to control the antiderivative $W$,
and for convenience we also suppose that $m \le 2$ (note, however,
that all upper bounds on $m$ will be relaxed later). The crucial
observation is that the coefficient of $E_{m-3,\delta}$ in \eqref{Em1}
vanishes if $m = 2$, and becomes negative if $m < 2$ provided that the
parameter $\delta > 0$ is chosen large enough. Therefore, we
assume that
\begin{equation}\label{lambdadef}
  m \,=\, \frac{n}{2} + \lambda\,, \quad \hbox{where}\quad
  0 < \lambda < \beta\,, \quad \lambda \,\le\, 2 - \frac{n}{2}\,, 
  \qquad \hbox{and}\quad \delta \,\ge\, 2 C_1(2-m)\,. 
\end{equation}
Under these hypotheses, inequalities \eqref{Em1}, \eqref{em3} become
\begin{align}\nonumber
   \partial_\tau E_{m-2,\delta}(\tau) \,&\le\, -\frac{\lambda_1}{2} \int
   (\delta+|y|^2)^{m-2} |\nabla W|^2 \dd y - \lambda E_{m-2,\delta}(\tau)
   + \int (\delta+|y|^2)^{m-2} W R_1 \dd y\,, \\ \label{Eem}
  \partial_\tau e_{m,\delta}(\tau) \,&\le\, - \frac{\lambda_1}{2}\int
  (\delta+|y|^2)^m |\nabla w|^2 \dd y \,-\, \lambda e_{m,\delta}(\tau)
  +\, C_3\,e_{m-1,\delta}(\tau)\\ \nonumber
  & \hskip 13pt \,+\, C_2\,\alpha^2\int (\delta+|y|^2)^m \|B(ye^{\tau/2})\|^2
  |\nabla \phi|^2 \dd y\,,
\end{align}
where $C_3 = m\delta + C_1 m^2$ and $\lambda_1 > 0$ is as in \eqref{Aelliptic}. 

The next step is a simple interpolation argument which allows us to
control the undesirable quantity $C_3\,e_{m-1,\delta}$ in \eqref{Eem}
using the negative terms involving $\nabla w$ and $\nabla W$.
In view of \eqref{Welliptic}, we have
\begin{align*}
  2 e_{m-1,\delta} \,&=\, \int(\delta+|y|^2)^{m-1}|w|^2\dd y \,=\,
  -\int(\delta+|y|^2)^{m-1} w \div\bigl(A_\infty \nabla W\bigr) \dd y\\
  \,&=\, \int(\delta+|y|^2)^{m-1}\Langle \nabla w,A_\infty\nabla W\Rangle\dd y
  + 2(m{-}1)\int(\delta+|y|^2)^{m-2} w\Langle y ,A_\infty\nabla W\Rangle \dd y\\
  \,&\le\, \epsilon_0 \int (\delta+|y|^2)^m |\nabla w|^2 \dd y + C_{\epsilon_0,m}
  \int (\delta+|y|^2)^{m-2} |\nabla W|^2\dd y + e_{m-1,\delta}\,,
\end{align*}
where the parameter $\epsilon_0 > 0$ can be taken arbitrarily small. In
the last line, we used again the obvious inequality $(\delta+|y|^2)^{m-2}|y|^2
\le (\delta+|y|^2)^{m-1}$. Assuming that $C_3 \epsilon_0 \le \lambda_1/4$,
we thus obtain
\begin{equation}\label{interpol}
  C_3\,e_{m-1,\delta} \,\le\, \frac{\lambda_1}{4} \int (\delta+|y|^2)^m
  |\nabla w|^2 \dd y + C_4  \int (\delta+|y|^2)^{m-2} |\nabla W|^2
  \dd y\,,
\end{equation}
for some positive constant $C_4$.

We now choose a constant $\kappa > 0$ large enough so that $\kappa\lambda_1
\ge 2 C_4$, and we consider the combined energy functional
\begin{equation}\label{cEdef}
  \cE_{m,\delta}(\tau) \,=\, e_{m,\delta}(\tau) + \kappa E_{m-2,\delta}(\tau)\,,
  \qquad \tau \,\ge\, 0\,.
\end{equation}
By Proposition~\ref{Kernelprop1}, we have $e_{m,\delta}(\tau) \le
\cE_{m,\delta}(\tau) \le C_5\,e_{m,\delta}(\tau)$ for some $C_5 > 0$. Moreover,
it follows from \eqref{Eem} and from our choice of $\kappa$ that
$\cE_{m,\delta}(\tau)$ satisfies the differential inequality
\begin{equation}\label{cE1}
  \partial_\tau \cE_{m,\delta}(\tau) \,\le\, -\frac{\lambda_1}{4} \int
  (\delta+|y|^2)^m |\nabla w|^2 \dd y \,-\, \lambda
  \cE_{m,\delta}(\tau) + \kappa \cF_1(\tau) + C_2 \cF_2(\tau)\,,
\end{equation}
where
\[
  \cF_1(\tau) \,=\, \int (\delta+|y|^2)^{m-2}\,W R_1 \dd y\,, \qquad
  \cF_2(\tau) \,=\, \alpha^2\int (\delta+|y|^2)^m \|B(ye^{\tau/2})\|^2\,
  |\nabla \phi|^2\dd y\,.
\]

Our final task is to estimate the remainder terms $\cF_1, \cF_2$ in
\eqref{cE1}, which involve the matrix $B(x) = A(x) - A_\infty(x)$,
either explicitly or through the definition \eqref{R1def} of $R_1$. We
recall that $B$ satisfies the bound \eqref{Bestimate} for some $\nu >
0$. We start with the term $\cF_1$, which can be bounded using Young's
inequality and Proposition~\ref{Kernelprop2}. For $\epsilon > 0$
arbitrarily small, we thus obtain
\begin{align*}
  \cF_1(\tau) \,&\le\, \epsilon E_{m-2,\delta}(\tau) + 
  C_\epsilon \int (\delta+|y|^2)^{m-2} |R_1(y,\tau)|^2 \dd y\\
  \,&\le\, \epsilon E_{m-2,\delta}(\tau) + C_\epsilon \int |y|^{2m-2}
  \|B(ye^{\tau/2})\|^2 \bigl(\alpha^2|\nabla \phi|^2 + |\nabla w|^2\bigr)\dd y\,,
\end{align*}
where in the second line we used the fact that $(\delta + |y|^2)^{m-2}\le
|y|^{2m-4}$ because $m \le 2$, and we applied estimate \eqref{uweight2}
with $u = R_1$ and $g = B(\cdot\,e^{\tau/2})(\alpha \nabla\phi + \nabla w)$. 
To bound the last integral, we take $\gamma =\min(\nu,m-1) > 0$ and we
observe that
\[
  |y|^{2m-2}\|B(ye^{\tau/2})\|^2 \,\le\, C |y|^{2\gamma}\|B(ye^{\tau/2})\|^2
  (\delta+|y|^2)^m \,\le\, C\,e^{-\gamma \tau} (\delta+|y|^2)^m\,,
\]
because $\sup_{x\in \R^n}|x|^\gamma \|B(x)\| < \infty$. Using in
addition Proposition~\ref{gradphi}, we arrive at
\begin{equation}\label{cF1}
  \cF_1(\tau) \,\le\, \epsilon E_{m-2,\delta}(\tau) + C_\epsilon
  \,e^{-\gamma\tau}\Bigl(\alpha^2 + \int (\delta+|y|^2)^m |\nabla w|^2\dd y
  \Bigr)\,.
\end{equation}

To control $\cF_2$, we use the bound $(\delta+|y|^2)^m \le 2^{m-1}(\delta^m +
|y|^{2m})$, and we treat the term involving $|y|^{2m}$ exactly as before.
When no power of $|y|$ is available, this argument does not work,
but taking $0 < \epsilon < \gamma$ we can apply H\"older's inequality
with conjugate exponents
\[
  q \,=\, \frac{n}{2(\gamma{-}\epsilon)}\,, \qquad p \,=\,
  \frac{n}{n-2(\gamma{-}\epsilon)}\,, \qquad \hbox{so that}\quad
  1 \,<\, p \,<\, \frac{n}{2(1-\beta)}\,.
\]
We know that $\nabla \phi \in L^{2p}(\R^n)$ by Proposition~\ref{gradphi}, and
that $x \mapsto B(x) \in L^{2q}(\R^n)$ in view of \eqref{Bestimate} because
$2q = n/(\gamma{-}\epsilon) > n/\nu$. It follows that
\[
  \int \|B(ye^{\tau/2})\|^2|\nabla \phi|^2 \dd y \le
  \left(\int \|B(ye^{\tau/2})\|^{2q}\dd y\right)^{\!1/q}\! 
  \left(\int| \nabla\phi|^{2p} \dd y\right)^{\!1/p} \le
  C_\epsilon \,e^{-(\gamma-\epsilon) \tau} \|\nabla \phi\|_{L^{2p}}^2\,,
\]
hence
\begin{equation}\label{cF2}
  \cF_2(\tau) \,\le\, C_\epsilon \,\alpha^2 \,e^{-(\gamma-\epsilon)\tau}
  \Bigl(\|\nabla \phi\|_{L^{2p}}^2 + \|(1+|y|)^m \nabla \phi\|_{L^2}^2
  \Bigr)\,.
\end{equation}

To summarize, it follows from \eqref{cE1}, \eqref{cF1}, \eqref{cF2} that
\begin{equation}\label{cE2}
  \partial_\tau \cE_{m,\delta}(\tau) \,\le\, -(\lambda-\epsilon) \cE_{m,\delta}(\tau)
  + \Bigl(C_\epsilon\,e^{-\gamma\tau} - \lambda_1\Bigr)\cD_{m,\delta}(\tau) + 
  C_\epsilon' \,\alpha^2 \,e^{-(\gamma-\epsilon) \tau}\,, \quad \tau > 0\,,
\end{equation}
where
\begin{equation}\label{cDdef}
  \cD_{m,\delta}(\tau) \,=\, \frac14 \int (\delta+|y|^2)^m |\nabla w(y,\tau)|^2
  \dd y\,.
\end{equation}
Here the parameter $\epsilon>0$ can be taken arbitrarily small, and
the constants $C_\epsilon, C_\epsilon' > 0$ depend only on $\epsilon$
and on the properties of the matrix $A$. If $\tau > 0$ is large enough,
the coefficient of $\cD_{m,\delta}(\tau)$ in the right-hand side of
\eqref{cE2} becomes negative, and we obtain a differential inequality for
the combined energy which implies that $\cE_{m,\delta}(\tau)$ decays
exponentially as $\tau \to +\infty$. More precisely, using
inequalities \eqref{em3} and \eqref{cE2}, we obtain\:

\begin{prop}\label{propdecay}
Assume that $n = 2$ or $3$, $m \in (n/2,n/2+\beta)$, and $m \le 2$. For any
real number $\mu$ satisfying \eqref{mudef1}, there exists a positive
constant $C$ such that, for any $\alpha \in \R$ and any initial data
$w_0 \in L^2_0(m)$, the solution $w \in C^0([0,+\infty),L^2_0(m))$
of \eqref{weqlin} satisfies 
\begin{equation}\label{alltimes}
  \|w(\tau)\|_{L^2(m)} \,\le\, C \bigl(\|w_0\|_{L^2(m)} +
  |\alpha|\bigr)\,e^{-\mu \tau}\,, \qquad \tau \ge 0\,.
\end{equation}
\end{prop}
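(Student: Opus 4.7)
The plan is to integrate the differential inequality \eqref{cE2} for the combined energy $\cE_{m,\delta}(\tau)$ via a Grönwall-type argument, splitting the analysis into long times (where the inequality becomes genuinely dissipative) and short times (where a cruder bound suffices). Before starting, I fix $\epsilon > 0$ small enough so that
\[
2\mu \,<\, \min(\lambda - \epsilon,\, \gamma - \epsilon)\,, \qquad \text{where}\quad \gamma \,=\, \min(\nu,\, m-1)\,.
\]
This choice is compatible with \eqref{mudef1}: the conditions $2\mu < \lambda$ and $2\mu < \nu$ are direct, while the inequality $2\mu < m-1$ is where the hypotheses $n \in \{2,3\}$ and $m \le 2$ are used. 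Indeed, when $n = 2$ one has $m - 1 = \lambda > 2\mu$, and when $n = 3$ the bound $m \le 2$ gives $\lambda \le 1/2$, whence $m - 1 = \lambda + 1/2 > \lambda > 2\mu$.

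\textbf{Long-time regime.} I pick $\tau_0 > 0$ such that $C_\epsilon e^{-\gamma \tau_0} \le \lambda_1$. Then for all $\tau \ge \tau_0$ the coefficient in front of $\cD_{m,\delta}(\tau)$ in \eqref{cE2} is nonpositive, and the inequality reduces to
\[
\partial_\tau \cE_{m,\delta}(\tau) \,\le\, -(\lambda - \epsilon)\,\cE_{m,\delta}(\tau) + C'_\epsilon\,\alpha^2 \,e^{-(\gamma-\epsilon)\tau}\,.
\]
Applying Grönwall's lemma on $[\tau_0,\tau]$ and using $\lambda - \epsilon > 2\mu$ and $\gamma - \epsilon > 2\mu$ to bound both the homogeneous part and the Duhamel integral, I obtain
\[
\cE_{m,\delta}(\tau) \,\le\, C\bigl(\cE_{m,\delta}(\tau_0) + \alpha^2\bigr)\,e^{-2\mu \tau}\,, \qquad \tau \ge \tau_0\,.
\]

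\textbf{Short-time regime.} On $[0,\tau_0]$, I use \eqref{em3} more directly: the quadratic gradient term is nonpositive and may be discarded, the coefficient $(m\delta + C_1 m^2)\,e_{m-1,\delta}$ is dominated by $C\,e_{m,\delta}$ since $(\delta+|y|^2)^{m-1} \le \delta^{-1}(\delta+|y|^2)^m$, and the final forcing integral is bounded by $C\alpha^2$ because $B$ is uniformly bounded and $(1+|y|)^m\nabla\phi \in L^2(\R^n)$ by Proposition~\ref{gradphi}. This yields a crude inequality of the form $\partial_\tau e_{m,\delta} \le C\,e_{m,\delta} + C\alpha^2$, whose integration on the bounded interval $[0,\tau_0]$ gives $e_{m,\delta}(\tau) \le C(e_{m,\delta}(0) + \alpha^2)$ uniformly. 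Via the equivalence $e_{m,\delta} \le \cE_{m,\delta} \le C_5\,e_{m,\delta}$ (a consequence of Proposition~\ref{Kernelprop1}), this controls $\cE_{m,\delta}(\tau_0)$ by $C(\|w_0\|_{L^2(m)}^2 + \alpha^2)$, and the bound $\cE_{m,\delta}(\tau) \le C(\|w_0\|_{L^2(m)}^2 + \alpha^2)\,e^{-2\mu\tau}$ on the same interval is immediate because $e^{-2\mu\tau}$ is bounded below there.

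Combining the two regimes and using $\tfrac12\|w(\tau)\|_{L^2(m)}^2 \sim e_{m,\delta}(\tau) \le \cE_{m,\delta}(\tau)$, taking square roots gives \eqref{alltimes}. The one genuinely delicate step is verifying that $\epsilon$ can be chosen compatibly with all four decay rates: this ultimately reduces to the inequality $2\mu < m - 1$, which is where the dimensional restriction $n \le 3$ combined with $m \le 2$ enters decisively. For $n \ge 4$ this inequality fails in general, and that is precisely the obstruction that forces the introduction of the hierarchy of higher-order energy functionals in Section~\ref{ssec45}.
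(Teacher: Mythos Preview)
Your proof is correct and follows essentially the same two-regime strategy as the paper: use \eqref{cE2} after a waiting time to get exponential decay, and a crude Gr\"onwall bound on the finite initial interval. Your short-time argument is in fact slightly simpler than the paper's, which iterates \eqref{em5} over $m' = 0, 1, 2, \dots$; you just bound $e_{m-1,\delta} \le \delta^{-1} e_{m,\delta}$ directly, which is perfectly legitimate here.

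One correction to your closing commentary: the hypothesis $m \le 2$ is \emph{not} needed to ensure $2\mu < m-1$. For $n=3$ you have $m-1 = \lambda + \tfrac12 > \lambda > 2\mu$ regardless of whether $m \le 2$, and for $n \ge 4$ one has $m-1 > n/2 - 1 \ge 1 > \lambda > 2\mu$, so the inequality $2\mu < m-1$ actually never fails. The constraint $m \le 2$ is used earlier, in the derivation of \eqref{cE2} itself: it is what allows the choice of $\delta$ in \eqref{lambdadef} to kill the term $\bigl((m{-}2)\delta + C_1(m{-}2)^2\bigr) E_{m-3,\delta}$ in \eqref{Em1}, and it is also used in the estimate of $\cF_1$ via $(\delta+|y|^2)^{m-2} \le |y|^{2m-4}$. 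So by the time you reach \eqref{cE2}, the restriction has already done its work; the higher-dimensional obstruction in Section~\ref{ssec45} is about deriving an analogue of \eqref{cE2}, not about integrating it.
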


\begin{proof}
Given $\mu$ satisfying \eqref{mudef1}, we choose $\epsilon > 0$
small enough so that $2\mu < \min(\lambda,\gamma) - \epsilon$, where
(as above) $\lambda = m-n/2$ and $\gamma = \min(\nu,m{-}1)$. 
If $\tau_* > 0$ is large enough so that $\lambda_1\,e^{\gamma \tau_*} \ge
C_\epsilon$, we can omit the term involving $\cD_{m,\delta}(\tau)$ in
the right-hand side of \eqref{cE2}, and integrating the resulting differential
inequality we obtain $\cE_{m,\delta}(\tau) \le C\bigl(\cE_{m,\delta}(\tau_*) +
\alpha^2\bigr)e^{-2\mu(\tau-\tau_*)}$ for $\tau \ge \tau_*$. Since the
combined energy $\cE_{m,\delta}(\tau)$ is equivalent to $\|w(\tau)\|_{L^2(m)}^2$,
this gives the large time estimate
\begin{equation}\label{largetime}
  \|w(\tau)\|_{L^2(m)}^2 \,\le\, C \bigl(\|w(\tau_*)\|_{L^2(m)}^2 +
  \alpha^2\bigr)\,e^{-2\mu (\tau-\tau_*)}\,, \qquad \tau \ge \tau_*\,.
\end{equation}

To control the solution for intermediate times, we use the differential 
inequality \eqref{em3} with $\delta = 1$, which is in fact valid regardless
of the value of the parameter $m$. If we bound the last term in the right-hand
side using \eqref{cF2}, we obtain the useful inequality
\begin{equation}\label{em5}
  \partial_\tau \|w(\tau)\|_{L^2(m')}^2 \,\le\,  \frac{n{-}2m'}{2}\,
  \|w(\tau)\|_{L^2(m')}^2   + \bigl(m'\delta + C_1 m'^2\bigr)
  \|w(\tau)\|_{L^2(m'-1)}^2 + C_2\,\alpha^2 \,e^{-\gamma'\tau}\,,
\end{equation}
which holds for any $m'\ge 0$ and any $\gamma' \in [0,m']$ with
$\gamma' < \nu$. In particular, if $m' = 0$ and $\gamma' = 0$, we have
$\partial_\tau \|w(\tau)\|_{L^2}^2 \le (n/2)\|w(\tau)\|_{L^2}^2 +
C_2\alpha^2$, so that $\|w(\tau)\|_{L^2}^2 \le \bigl(\|w(0)\|_{L^2}^2
+ C\alpha^2\bigr)e^{n\tau/2}$ for all $\tau \ge 0$. Then, taking
successively $m' = 1$, $m' = 2$, \dots we obtain in a finite number of
steps the rough estimate
\begin{equation}\label{smalltime}
  \|w(\tau)\|_{L^2(m)}^2 \,\le\, C\bigl(\|w_0\|_{L^2(m)}^2 +  \alpha^2\bigr)
  \,e^{n\tau/2}\,, \qquad \tau \ge 0\,.
\end{equation}
Combining \eqref{smalltime} for $\tau \le \tau_*$ and \eqref{largetime}
for $\tau \ge \tau_*$ , we easily obtain \eqref{alltimes}. 
\end{proof}

\subsection{Higher-order antiderivatives}\label{ssec45}

Proposition~\ref{propdecay} is the main ingredient in the proof of
Theorem~\ref{main1} in low space dimensions. It is obtained, however,
under the (unfortunate) assumption that $m \le 2$, which implies first
that $n \le 3$, and also that the convergence rate $\mu$ cannot exceed
the value $1/4$ if $n = 3$, even if the parameters $\beta,\nu$ are
larger than $1/2$. To remove these artificial restrictions, we need to
introduce higher-order antiderivatives, as we now explain. The reader
who is satisfied with the assumptions of Proposition~\ref{propdecay}
can skip what follows and jump directly to Section~\ref{ssec46}.

We first recall that most of our analysis so far, including the
weighted estimates in Section~\ref{ssec25}, is valid in arbitrary
space dimension $n \ge 2$. In Section~\ref{sec4}, the differential
inequality \eqref{em3} for the energy functional $e_{m,\delta}(\tau)$
holds for all $n \ge 2$ and any $m \ge 0$, but is not sufficient by
itself to prove exponential decay of the solutions. This was precisely
the reason for introducing the additional functional $E_{m-2,\delta}(\tau)$, 
which involves the antiderivative $W = K[w]$. The assumption that $m \le 2$ 
is needed to eliminate the undesirable term involving $E_{m-3,\delta}(\tau)$ 
in the right-hand side of \eqref{Em1}, so as to obtain exponential decay
by combining \eqref{em3} and \eqref{Em1}. 

We now consider the situation where $m \in (n/2,n/2+\beta)$ and $2 < m
\le 4$, which is possible if $n = 3$ and $\beta>1/2$, or if $4 \le n
\le 7$. In that case, keeping in mind the conclusions of
Propositions~\ref{Kernelprop1} and \ref{Kernelprop2}, which show that
each application of the linear operator $K$ decreases by two units the
power $m$ in the weight $(\delta + |y|^2)^m$, we introduce the ``second
antiderivative'' $\WW = K[W] = K^2[w]$. We know from
Remark~\ref{rem_p} that $W \in L^2(m{-}2)$, and our current
assumptions on $m$ imply that $0 < m-2 < n/2$. Thus we can apply
Proposition~\ref{Kernelprop0} which asserts that $\WW \in L^2(m{-}4)$
with $\|\WW\|_{L^2(m-4)} \le C\|W\|_{L^2(m-2)} \le C\|w\|_{L^2(m)}$. Moreover,
proceeding as in Section~\ref{ssec43}, it is straightforward to verify
that $\WW(y,\tau)$ satisfies the evolution equation
\begin{equation}\label{WWeqlin}
  \partial_\tau \WW \,=\, \div\bigl(A_\infty(y)\nabla \WW\bigr) + \frac12
  \,y\cdot\nabla \WW + \frac{n{-}4}{2}\,\WW + K[R_1]\,,
\end{equation}
where $R_1$ is as in \eqref{R1def}. Note that the factor $(n{-}2)/2$
in \eqref{Weqlin} becomes $(n{-}4)/2$ in \eqref{WWeqlin}. 

The natural energy functional for the new variable $\WW$ is
\begin{equation}\label{EEmdef}
  \EE_{m-4,\delta}(\tau) \,=\, \frac 12 \int_{\R^n} (\delta+|y|^2)^{m-4}
  |\WW(y,\tau)|^2 \dd y\,, \qquad \tau \ge 0\,.
\end{equation}
In analogy with \eqref{Em1} we find
\begin{equation}\label{EEm}
  \begin{split}
  \partial_\tau \EE_{m-4,\delta}(\tau) \,&\le\,  -\frac12 \int (\delta+|y|^2)^{m-4}
  \Langle\nabla \WW, A_\infty \nabla \WW\Rangle\dd y \,+\, \frac{n{-}2m}{2}\,
  \EE_{m-2,\delta}(\tau)\\ 
  \,&+\,\Bigl((m{-}4)\delta + C_1(m{-}4)^2\Bigr) \EE_{m-5,\delta}(\tau)
  \,+\, \int (\delta+|y|^2)^{m-4} \,\WW K[R_1] \dd y\,.
  \end{split}
\end{equation}
As in Section~\ref{ssec44}, since $m \le 4$, the coefficient of $\EE_{m-5,\delta}$
in \eqref{EEm} can be made non-positive by an appropriate choice of
$\delta$. Moreover the negative term involving $\nabla \WW$ can be used
to control the undesirable quantity $\bigl((m{-}2)\delta + C_1(m{-}2)^2\bigr)
E_{m-3,\delta}(\tau)$ in \eqref{Em1}, in view of the interpolation inequality
\[
  E_{m-3,\delta} \,\le\, \varepsilon \int (\delta+|y|^2)^{m-2}
  |\nabla W|^2 \dd y + C_\varepsilon \int (\delta+|y|^2)^{m-4} |\nabla \WW|^2
  \dd y\,,
\]
which is established exactly as in \eqref{interpol}. Finally, the remainder term
in \eqref{EEm} can be estimated just as the quantity $\cF_1$ in \eqref{cE1}. 
Indeed, since $m-4 \le 0$, Proposition~\ref{Kernelprop0} yields
\[
  \int (\delta+|y|^2)^{m-4} \,|K[R_1]|^2 \dd y \,\le\,  \int |y|^{2(m-4)} \,|K[R_1]|^2
  \dd y \,\le\, C \int |y|^{2(m-2)} \,|R_1|^2 \dd y \,.
\]

The arguments above allow us to control the solution of \eqref{weqlin} using the
new functional
\[
  \cE^{(2)}_{m,\delta}(\tau) \,=\, e_{m,\delta}(\tau) + \kappa_1 E_{m-2,\delta}(\tau)
  + \kappa_2 \EE_{m-4,\delta}(\tau)\,, \qquad \tau \ge 0\,,
\]
where $\kappa_1, \kappa_2$ are positive constants satisfying
$\kappa_2 \gg \kappa_1 \gg 1$. Combining the differential
inequalities \eqref{em3}, \eqref{Em1}, \eqref{EEm} and proceeding as
in Section~\ref{ssec44}, it is straightforward to prove the
exponential decay of the energy $\cE^{(2)}_{m,\delta}(\tau)$ as
$\tau \to +\infty$.

In yet higher space dimensions, namely when $m \in (n/2,n/2+\beta)$
and $m > 4$, the strategy is similar but it becomes necessary to use
the iterated antiderivatives $W^{(\ell)} = K^\ell[w]$ for larger
values of $\ell \in \N$. To give a flavor, let $\ell$ be the smallest
integer such that $m - 2\ell \le 0$. The energy functional
$E_{m-2\ell,\delta}^{(\ell)}(\tau)$ is defined in close analogy with
\eqref{EEmdef}, and satisfies a differential inequality similar to
\eqref{EEm} where the coefficient $(m{-}2\ell)\delta + C_1(m{-}2\ell)^2$ 
in front of $E_{m-2\ell-1,\delta}^{(\ell)}$ is either zero or can be
made negative by an appropriate choice of $\delta$. Moreover, the
negative term involving $|\nabla W^{(\ell)}|^2$ can be used to control
an undesirable quantity in the evolution equation for the next functional 
in the hierarchy, which is $E_{m-2(\ell-1),\delta}^{(\ell-1)}$. Exponential
decay can thus be established using a combined functional of the 
form
\[
  \cE^{(\ell)}_{m,\delta}(\tau) \,=\, e_{m,\delta}(\tau) + \kappa_1 E_{m-2,\delta}(\tau)
  + \kappa_2 E_{m-4,\delta}^{(2)}(\tau) + \dots + \kappa_\ell 
  E_{m-2\ell,\delta}^{(\ell)}(\tau)\,,
\]
for some suitable constants $\kappa_1, \dots, \kappa_\ell$. The details are 
left to the reader.

Taking the above arguments for granted, we thus obtain\:

\begin{cor}\label{propdecaybis}
The conclusion of Proposition~\ref{propdecay} holds for all $n \ge 2$
if $m \in (n/2,n/2+\beta)$. 
\end{cor}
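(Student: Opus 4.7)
The plan is to implement rigorously the hierarchy of iterated antiderivatives sketched in Section~\ref{ssec45}. Fix $m \in (n/2, n/2+\beta)$ and let $\ell \in \N^*$ be the smallest integer with $m - 2\ell \le 0$; Proposition~\ref{propdecay} already settles $\ell = 1$, so the work lies in the range $\ell \ge 2$. For $j = 0, 1, \ldots, \ell$, I would set $W^{(j)}(\cdot,\tau) = K^j[w(\cdot,\tau)]$, with $W^{(0)} = w$. The first application of $K$ uses the zero-mean condition on $w$ and Proposition~\ref{Kernelprop1} (landing in $L^2(m{-}2)$); for $2 \le j \le \ell - 1$ the exponent $m - 2j$ lies in $(0, n/2)$, so Proposition~\ref{Kernelprop0} applies; the last application, with $m - 2\ell \le 0$, is controlled by combining the same weighted far-field bound with the HLS estimate \eqref{KHLS} on a bounded set. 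Once the iterates are defined, I would repeat the calculation of Lemma~\ref{Weqlem} inductively, using the Euler-type identity \eqref{GEuler} once per level, to obtain
\[
\partial_\tau W^{(j)} \,=\, \div\bigl(A_\infty \nabla W^{(j)}\bigr) + \tfrac12\,y\cdot \nabla W^{(j)} + \tfrac{n-2j}{2}\,W^{(j)} + K^j[\tilde r_1]\,,
\]
where $\tilde r_1$ is the divergence-form source from the proof of Lemma~\ref{Weqlem}.

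Next, in complete analogy with \eqref{em3}, \eqref{Em1}, \eqref{EEm}, I would introduce the functional
\[
E^{(j)}_{m-2j,\delta}(\tau) \,=\, \frac12\int_{\R^n}(\delta+|y|^2)^{m-2j}\,|W^{(j)}(y,\tau)|^2\dd y\,,
\]
and derive, for each $j = 0, 1, \ldots, \ell$, a differential inequality of the shape
\[
\partial_\tau E^{(j)}_{m-2j,\delta} \,\le\, -\tfrac{\lambda_1}{2}\!\int (\delta+|y|^2)^{m-2j}|\nabla W^{(j)}|^2\dd y - \lambda E^{(j)}_{m-2j,\delta} + c_j E^{(j)}_{m-2j-1,\delta} + \cF^{(j)}(\tau)\,,
\]
with $\lambda = m - n/2$, $c_j = (m-2j)\delta + C_1(m-2j)^2$, and $\cF^{(j)}$ a remainder involving $K^j[\tilde r_1]$. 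The decisive feature of the hierarchy is that at the top level $j = \ell$ one has $m - 2\ell \le 0$, so $c_\ell$ can be made non-positive by choosing $\delta$ large enough, eliminating the dangerous lower-order term. For every $j < \ell$, the unwanted contribution $c_j E^{(j)}_{m-2j-1,\delta}$ is absorbed by the interpolation trick \eqref{interpol}, with $W^{(j+1)} = K[W^{(j)}]$ playing the role of antiderivative of $W^{(j)}$: it is bounded by a small multiple of $\int(\delta+|y|^2)^{m-2j}|\nabla W^{(j)}|^2\dd y$ plus a larger multiple of $\int(\delta+|y|^2)^{m-2(j+1)}|\nabla W^{(j+1)}|^2\dd y$.

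The system is then closed with the combined functional
\[
\cE^{(\ell)}_{m,\delta}(\tau) \,=\, \sum_{j=0}^\ell \kappa_j\, E^{(j)}_{m-2j,\delta}(\tau)\,,
\]
where the constants $1 = \kappa_0 \ll \kappa_1 \ll \cdots \ll \kappa_\ell$ are chosen inductively from the top down, so that at each level the negative gradient term arising at level $j+1$ dominates the absorption constant produced at level $j$. The remainders $\cF^{(j)}$ are bounded exactly as $\cF_1$ in \eqref{cF1}, using the iterated mapping properties of $K$ together with the fact that $\|B(ye^{\tau/2})\|$ decays like $e^{-\gamma \tau}$ in the relevant weighted norms. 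The outcome is a differential inequality for $\cE^{(\ell)}_{m,\delta}$ of the form \eqref{cE2}, which integrates to exponential decay of $\cE^{(\ell)}_{m,\delta}(\tau)$ (and hence of $\|w(\tau)\|_{L^2(m)}^2$) at rate $\lambda - \varepsilon$ for $\tau$ beyond some $\tau_*$; intermediate times are covered by the crude bound \eqref{smalltime}, which holds for any $m$.

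The hard part will be bookkeeping rather than any individual estimate. Specifically, I expect the main obstacle to be checking that each iterate $K^j[w]$ lands in the expected weighted $L^2$ space as $m-2j$ crosses the different regimes covered by Propositions~\ref{Kernelprop0}--\ref{Kernelprop2}, and justifying the integration-by-parts step underlying the evolution equation for $W^{(j)}$ at every level of the induction (this will rely on a cut-off argument in the spirit of the proof of Corollary~\ref{2Kcor}, repeated $\ell$ times). Once these technicalities are in place, the selection of the constants $\kappa_j$ and $\delta$ so that all absorptions in the cascade go through simultaneously is a finite, explicit inductive computation.
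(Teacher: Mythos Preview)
Your proposal follows essentially the same approach as the paper: the paper's Section~\ref{ssec45} sketches precisely this hierarchy of iterated antiderivatives $W^{(j)} = K^j[w]$, the evolution equations with amplification factor $(n-2j)/2$, the energy functionals $E^{(j)}_{m-2j,\delta}$, the interpolation cascade, and the combined functional $\cE^{(\ell)}_{m,\delta}$ with constants $\kappa_1 \ll \cdots \ll \kappa_\ell$, leaving the details to the reader. Your identification of the main technical obstacle---verifying that each $K^j[w]$ lands in the correct weighted space as $m-2j$ crosses the regimes of Propositions~\ref{Kernelprop0}--\ref{Kernelprop2}---is apt and matches the care the paper takes at the $j=2$ level.
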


\subsection{End of the proof of Theorem~\ref{main1}}\label{ssec46}

We conclude here the proof of Theorem~\ref{main1} assuming the
validity of Corollary~\ref{propdecaybis}, which was carefully 
established at least in low dimensions, see Proposition~\ref{propdecay}. 
What remains to be done is essentially to remove the upper bound
$n/2+\beta$ on the parameter $m$. This will not increase the
convergence rate $\mu$, as can be seen from \eqref{mudef1}, but
estimate \eqref{mainconv1} will nevertheless be obtained in a stronger
norm.  To do that, our strategy is to introduce an auxiliary parameter
$\bar m \le m$ such that $\bar m \in (n/2,n/2+\beta)$. Estimate 
\eqref{alltimes} allows us to control the solution in the larger space
$L^2(\bar m)$, and a simple interpolation gives convergence in 
$L^2(m)$ too.

We now provide the details. Assume that $n \ge 2$ and take initial
data $v_0 \in L^2(m)$ for some $m > n/2$. We decompose
$v_0 = \alpha \phi + w_0$, where $\alpha = \int v_0(y)\dd y$, and we
consider the unique solution $w \in C^0([0,+\infty), L^2_0(m))$ of
equation \eqref{weqlin} such that $w(0) = w_0$. Given $\mu$ satisfying
\eqref{mudef1}, we choose $\bar m \le m$ such that $\bar m \in 
(n/2+2\mu,n/2+\beta)$.  We start from estimate \eqref{em5} with
$m' = m$ and $\gamma' = 2\mu$, which gives
\[
  \partial_\tau \|w(\tau)\|_{L^2(m)}^2 \,\le\, \frac{n{-}2m}{2}\,
  \|w(\tau)\|_{L^2(m)}^2 + \bigl(m\delta + C_1 m^2\bigr) \|w(\tau)\|_{L^2(m-1)}^2
  + C_2\,\alpha^2 \,e^{-2\mu\tau}\,.
\]
We next use the elementary bound
\[
  \|w(\tau)\|_{L^2(m-1)}^2 \,\le\, \epsilon \|w(\tau)\|_{L^2(m)}^2 + 
  C_\epsilon \|w(\tau)\|_{L^2(\bar m)}^2\,,
\]
which is obtained by interpolation if $\bar m < m-1 <m$, and is completely
obvious if $m-1\le \bar m \le m$. Taking any $\lambda$ such that $2\mu < 
\lambda < (n{-}2m)/2$ and choosing $\epsilon > 0$ small enough, we thus obtain
\[
  \partial_\tau \|w(\tau)\|_{L^2(m)}^2 \,\le\,  -\lambda \,\|w(\tau)\|_{L^2(m)}^2
   + C'_\epsilon \|w(\tau)\|_{L^2(\bar m)}^2 + C_2\,\alpha^2 \,e^{-2\mu\tau}\,.
\]
The second term in the right-hand side is controlled using estimate
\eqref{alltimes} in the space $L^2(\bar m)$, and taking into account the 
fact that $\bar m \in (n/2+2\mu,n/2+\beta)$. This gives
\[
  \partial_\tau \|w(\tau)\|_{L^2(m)}^2 \,\le\, -\lambda\,\|w(\tau)\|_{L^2(m)}^2 +
  C''_\epsilon  \big(\|w_0\|_{L^2(\bar m)}^2 + \alpha^2\big)e^{- 2\mu \tau}
  + C_2\,\alpha^2 \,e^{-2\mu\tau}\,.
\]
As $\|w_0\|_{L^2(\bar m)} \le \|w_0\|_{L^2(m)}$ and $\lambda > 2\mu$, a final application
of Gr\"onwall's lemma gives the desired estimate
\[
  \|w(\tau)\|_{L^2(m)} \,\le\, C \bigl(\|w_0\|_{L^2(m)} +
  |\alpha|\bigr)\,e^{-\mu \tau}\,, \qquad \tau \ge 0\,, 
\]
where the constant $C$ depends on $n$, $m$, $\mu$, and on the properties
of the matrix $A$. \QED


\section{Long-time asymptotics in the semilinear case}\label{sec5}

In this section we study the long-time behavior of small solutions to
the full equation \eqref{veq}, where the nonlinearity $\cN(\tau,v)$ is
given by \eqref{cNdef}. As before, we concentrate on the low space
dimensions $n = 2$ and $n = 3$, but using the ideas introduced in 
Section~\ref{ssec45} it is possible to treat the higher-dimensional
case as well. We recall that the function $N$ in \eqref{cNdef}
satisfies \eqref{Nprop}, and we suppose without loss of generality
that the exponent $\sigma$ in \eqref{Nprop} lies in the range
\begin{equation}\label{sigmarange}
 1 + \frac{2}{n} \,<\, \sigma \,\le\, 1 + \frac{3}{n}\,.  
\end{equation}
This means that the quantity $\eta$ defined in \eqref{mudef2}
satisfies $0 < \eta \le 1/2$. Clearly, a larger value of $\sigma$,
hence of $\eta$, would not increase the convergence exponent $\mu$
in \eqref{mudef2}, since $\beta < 1$. 

In view of \eqref{cNdef}, \eqref{Nprop}, the nonlinearity $\cN$ in
\eqref{veq} satisfies
\begin{equation}\label{cNprop}
  \bigl|\cN(\tau,v)\bigr| \,\le\, C_0\,e^{-\eta \tau}|v|^\sigma\,,
  \qquad \hbox{and}\qquad \bigl|\cN(v) - \cN(\tilde v)\bigr|
  \,\le\, C_0\,e^\tau |v-\tilde v|\,,
\end{equation}
for all $v, \tilde v \in \R$ and all $\tau \ge 0$, where $C_0$ is some
positive constant. In particular, since $\cN(\tau,v)$ is a globally
Lipschitz function of $v$, uniformly in $\tau$ on compact intervals,
it is straightforward to verify, as in Lemma~\ref{wellposed}, that the
Cauchy problem for Eq.~\eqref{veq} is globally well-posed in the space
$L^2(m)$ for any $m \ge 0$. In other words, given any initial data
$v_0 \in L^2(m)$, there exists a unique global solution $v \in
C^0([0,+\infty),L^2(m))$ of \eqref{veq} such that $v(0) = v_0$. Our
goal here is to compute the long-time behavior of that solution when
the initial data are sufficiently small.

We assume henceforth that $m > n/2$, so that $L^2(m) \hookrightarrow
L^1(\R^n)$. We decompose the solution as in \eqref{vdecomp}, with
the important difference that the integral of $v$ is no longer
a conserved quantity. Instead we have
\begin{equation}\label{alphadot}
  \alpha(\tau) \,=\, \int_{\R^n} v(y,\tau)\dd y\,, \qquad \hbox{and}
  \qquad \alpha'(\tau) \,=\, \int_{\R^n} \cN\bigl(\tau,v(y,\tau)\bigr)
  \dd y\,.
\end{equation}
The equation satisfied by the perturbation $w(y,\tau) = v(y,\tau) -
\alpha(\tau)\phi(y)$ is of the form \eqref{weqlin}, except that 
the remainder term $r_1$ given by \eqref{r1def} is replaced by 
$r_1+ r_2$, where
\begin{equation}\label{r2def}
  r_2(y,\tau) \,=\, \cN\bigl(\tau,\alpha(\tau)\phi(y) + w(y,\tau)\bigr) -
  \alpha'(\tau)\phi(y)\,.
\end{equation}
Similarly, the antiderivative $W(y,\tau)$ defined by \eqref{Welliptic}
satisfies equation \eqref{Weqlin}, except that the remainder term $R_1$
defined by \eqref{R1def} is replaced by $R_1 + R_2$, where $R_2 = K[r_2]$. 

As in the previous section, our strategy is to control the solution
of \eqref{weqlin} or \eqref{Weqlin} using weighted energy estimates,
where the weight is a power of $\rho(y) := (\delta + |y|^2)^{1/2}$.
To treat the nonlinear terms, the following auxiliary results
will be useful.

\begin{lem}\label{Nlem1}
If $w \in L^2(m)$ and $\nabla w \in L^2(m)^n$, we have, for all 
$\tau \ge 0$,
\begin{equation}\label{lem1ineq}
  \int_{\R^n} \!\rho^{2m} |w|\,\bigl|\cN(\tau,\alpha\phi{+}w)\bigr|\dd y
  \,\le\, C\,e^{-\eta\tau}\Bigl(|\alpha|^\sigma \|w\|_{L^2(m)} +
  \|w\|_{L^2(m)}^{\sigma+1} + \|\nabla w\|_{L^2(m)}^{\eta + 1}\,
  \|w\|_{L^2(m)}^{\sigma - \eta}\Bigr),
\end{equation}
where $\eta > 0$ is as in \eqref{mudef2}. 
\end{lem}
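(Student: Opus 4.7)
The strategy is to bound the integrand pointwise by combining the sublinear estimate \eqref{cNprop}, namely $|\cN(\tau,v)| \le C_0 e^{-\eta\tau}|v|^\sigma$, with the elementary convexity inequality $|\alpha\phi+w|^\sigma \le 2^{\sigma-1}\bigl(|\alpha|^\sigma\phi^\sigma + |w|^\sigma\bigr)$, valid since $\sigma > 1$. After factoring out the common prefactor $C_0 e^{-\eta\tau}$, the proof of \eqref{lem1ineq} reduces to estimating the two integrals
\begin{equation*}
  I_1 \,:=\, |\alpha|^\sigma \int_{\R^n} \rho^{2m}\,|w|\,\phi^\sigma\dd y,
  \qquad I_2 \,:=\, \int_{\R^n} \rho^{2m}\,|w|^{\sigma+1}\dd y.
\end{equation*}

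For $I_1$, the Gaussian bound \eqref{phibounds} from Proposition~\ref{phiprop} implies that $\rho^m\phi^\sigma$ decays faster than any polynomial at infinity, hence lies in $L^2(\R^n)$. Cauchy--Schwarz then gives $I_1 \le C|\alpha|^\sigma\|w\|_{L^2(m)}$, which (together with the prefactor $e^{-\eta\tau}$) accounts for the first term on the right-hand side of \eqref{lem1ineq}.

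The main step is the estimate of $I_2$, for which I apply a standard Gagliardo--Nirenberg inequality to the conjugated function $u := \rho^m w$. The condition \eqref{sigmarange} ensures that $\sigma+1$ is a subcritical Sobolev exponent in every dimension $n\ge 2$, so GN yields
\begin{equation*}
  \|u\|_{L^{\sigma+1}}^{\sigma+1} \,\le\, C\,\|\nabla u\|_{L^2(\R^n)}^{\eta+1}\,\|u\|_{L^2(\R^n)}^{\sigma-\eta},
\end{equation*}
where the identity $\eta+1 = \tfrac{n}{2}(\sigma-1)$ matches precisely the definition of $\eta$ in \eqref{mudef2}. Since $\rho$ is bounded below by $\sqrt{\delta}$, one has $\rho^{2m} = \rho^{m(\sigma+1)}\,\rho^{-m(\sigma-1)} \le C_\delta\,\rho^{m(\sigma+1)}$, whence $I_2 \le C_\delta\,\|u\|_{L^{\sigma+1}}^{\sigma+1}$. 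A direct computation yields $\|u\|_{L^2(\R^n)}^2 = \int_{\R^n}\rho^{2m} w^2\dd y$, which is equivalent to $\|w\|_{L^2(m)}^2$, and from $\nabla u = m\,\rho^{m-2}\,y\,w + \rho^m\nabla w$ together with $|y|\le \rho$ one obtains $\|\nabla u\|_{L^2(\R^n)} \le C\bigl(\|w\|_{L^2(m-1)} + \|\nabla w\|_{L^2(m)}\bigr) \le C\bigl(\|w\|_{L^2(m)} + \|\nabla w\|_{L^2(m)}\bigr)$. Expanding $(\|w\|_{L^2(m)} + \|\nabla w\|_{L^2(m)})^{\eta+1} \le C\bigl(\|w\|_{L^2(m)}^{\eta+1} + \|\nabla w\|_{L^2(m)}^{\eta+1}\bigr)$ then produces the two remaining terms of \eqref{lem1ineq}.

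The only delicate point is the weighted Gagliardo--Nirenberg step: the conjugation $u := \rho^m w$ must be chosen specifically so that the Lebesgue exponents arising from GN coincide with $\eta+1$ and $\sigma-\eta$, while $\|u\|_{L^2(\R^n)}$ simultaneously reduces to $\|w\|_{L^2(m)}$ up to equivalent norms. Once this identification is carried out, the remainder is routine bookkeeping.
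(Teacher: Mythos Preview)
Your proof is correct and follows essentially the same approach as the paper: split via $|\cN(\tau,\alpha\phi+w)|\le C e^{-\eta\tau}(|\alpha|^\sigma\phi^\sigma+|w|^\sigma)$, handle the $\phi^\sigma$ term by Cauchy--Schwarz using the Gaussian decay of $\phi$, and control $\int\rho^{2m}|w|^{\sigma+1}$ by applying Gagliardo--Nirenberg to $\omega=\rho^m w$ with the exponent identification $\tfrac{n}{2}(\sigma-1)=\eta+1$. The only cosmetic difference is that the paper writes the weight comparison as $\rho^{2m}|w|^{\sigma+1}\le|\omega|^{\sigma+1}$ directly (using $\delta\ge 1$), whereas you spell it out as $\rho^{2m}\le C_\delta\,\rho^{m(\sigma+1)}$.
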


\begin{proof}
In view of \eqref{cNprop}, we have $|\cN(\tau,\alpha\phi{+}w)|
\le C\,e^{-\eta\tau} \bigl(|\alpha|^\sigma \phi^\sigma + |w|^\sigma\bigr)$,
hence
\[
  \int_{\R^n} \rho^{2m} |w|\,\bigl|\cN(\tau,\alpha\phi{+}w)\bigr|\dd y
  \,\le\, C\,e^{-\eta\tau}\biggl(|\alpha|^\sigma \|w\|_{L^2(m)} + 
  \int_{\R^n} \rho^{2m}|w|^{\sigma+1}\dd y\biggr)\,,
\]
where we used the Cauchy-Schwarz inequality and the fact that
$\phi^\sigma \in L^2(m)$, see \eqref{phibounds}. To bound the
last integral, we observe that $\rho^{2m}|w|^{\sigma+1} \le
|\omega|^{\sigma+1}$ where $\omega = \rho^m w$, and we use the
interpolation inequality
\begin{equation}\label{interpsigma}
  \int_{\R^n} |\omega|^{\sigma+1}\dd y \,\le\, C\,
  \|\nabla \omega\|_{L^2(\R^n)}^{\frac{n}{2}(\sigma-1)}\,
  \|\omega\|_{L^2(\R^n)}^{\sigma + 1 - \frac{n}{2}(\sigma-1)}\,,
\end{equation}
which is valid because $(\sigma+1)(n-2) \le 2n$. Since $\|\nabla
\omega\|_{L^2(\R^n)} \le C\bigl(\|\nabla w\|_{L^2(m)} + \|w\|_{L^2(m)}\bigr)$
and $(n/2)(\sigma-1) = 1 + \eta$, we obtain \eqref{lem1ineq}. 
\end{proof}

\begin{rem}\label{Nrem1}
We can simplify somehow estimate \eqref{lem1ineq} by applying 
Young's inequality to the various terms in the right-hand side.
The appropriate pairs of conjugate exponents are $p = p' = 2$ for
the first two terms, and $q = 2/(1{+}\eta)$, $q' = 2/(1{-}\eta)$
for the last one. We observe that $q' > 2$ and $q'(\sigma - \eta)
> 2\sigma$, hence assuming that $\|w\|_{L^2(m)} \le 1$  we obtain, for 
any $\epsilon > 0$, 
\[
  \int_{\R^n} \rho^{2m} |w|\,\bigl|\cN(\tau,\alpha\phi{+}w)\bigr|\dd y
  \,\le\, \epsilon \Bigl(\|w\|_{L^2(m)}^2 + \|\nabla
  w\|_{L^2(m)}^2\Bigr) +  C_\epsilon\,e^{-2\eta\tau}\Bigl(|\alpha|^{2\sigma} +
  \|w\|_{L^2(m)}^{2\sigma} \Bigr)\,.
\]
\end{rem}

\begin{lem}\label{Nlem2}
If $w \in L^2(m)$ and $\nabla w \in L^2(m)^n$, we have, for all 
$\tau \ge 0$, 
\begin{equation}\label{lem2ineq}
  \int_{\R^n} \rho^{m-n/2} \bigl|\cN(\tau,\alpha\phi{+}w)\bigr|\dd y
  \,\le\, C\,e^{-\eta\tau}\Bigl(|\alpha|^\sigma + \|w\|_{L^2(m)}^\sigma
  + \zeta_n \|\nabla w\|_{L^2(m)}^{\sigma-2} \,\|w\|_{L^2(m)}^2\Bigr)\,,
\end{equation}
where $\zeta_n = 0$ if $n \ge 3$ and $\zeta_n = 1$ if $n = 2$. 
\end{lem}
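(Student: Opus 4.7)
\textbf{Proof proposal for Lemma~\ref{Nlem2}.}
The starting point is the pointwise bound $|\cN(\tau,\alpha\phi+w)| \le C\,e^{-\eta\tau}(|\alpha|^\sigma\phi^\sigma + |w|^\sigma)$, obtained from the first inequality in \eqref{cNprop} together with the elementary estimate $|a+b|^\sigma \le 2^{\sigma-1}(|a|^\sigma+|b|^\sigma)$. After integrating against $\rho^{m-n/2}$, the Gaussian bound \eqref{phibounds} from Proposition~\ref{phiprop} immediately gives $\int \rho^{m-n/2}\phi^\sigma \dd y \le C$, so the proof reduces to showing that
\begin{equation*}
  I \,:=\, \int_{\R^n} \rho^{m-n/2}|w|^\sigma \dd y
\end{equation*}
is bounded by $C\|w\|_{L^2(m)}^\sigma$ when $n \ge 3$, and by $C\bigl(\|w\|_{L^2(m)}^\sigma + \|\nabla w\|_{L^2(m)}^{\sigma-2}\|w\|_{L^2(m)}^2\bigr)$ when $n = 2$. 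This dichotomy corresponds exactly to whether $\sigma \le 2$ or $\sigma > 2$ in \eqref{sigmarange}.

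When $n \ge 3$ and $\sigma < 2$, the plan is to factor $\rho^{m-n/2}|w|^\sigma = (\rho^m|w|)^\sigma \cdot \rho^{m-n/2-m\sigma}$ and apply H\"older's inequality with conjugate exponents $2/\sigma$ and $2/(2-\sigma)$, yielding
\begin{equation*}
  I \,\le\, \|w\|_{L^2(m)}^\sigma \biggl(\int_{\R^n} \rho^{\gamma} \dd y
  \biggr)^{(2-\sigma)/2}\,, \qquad \gamma \,=\, \frac{2(m{-}n/2{-}m\sigma)}{2-\sigma}\,.
\end{equation*}
A short manipulation yields the identity $\gamma + n = (2m+n)(1-\sigma)/(2-\sigma)$, which is strictly negative since $\sigma \in (1,2)$ and $2m+n > 0$, so the integral converges. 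The borderline value $\sigma = 2$ (possible only when $n = 3$) is handled by the direct inequality $\rho^{m-n/2}|w|^2 \le \delta^{-(m+n/2)/2}\rho^{2m}|w|^2$, valid because $\rho \ge \sqrt{\delta}$, which immediately gives $I \le C\|w\|_{L^2(m)}^2$.

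The case $n = 2$, in which \eqref{sigmarange} forces $\sigma \in (2, 5/2]$, is the source of the $\zeta_n = 1$ term and is the delicate step. Setting $\omega := \rho^m w$, we write $\rho^{m-1}|w|^\sigma = \rho^{-1-m(\sigma-1)}|\omega|^\sigma$ and bound the prefactor by a constant depending on $\delta$, so that $I \le C\|\omega\|_{L^\sigma(\R^2)}^\sigma$. The Gagliardo-Nirenberg interpolation inequality in $\R^2$, used with the scaling relation $1/\sigma = (1-\theta)/2$ which forces $\theta = (\sigma-2)/\sigma$, then yields
\begin{equation*}
  \|\omega\|_{L^\sigma(\R^2)}^\sigma \,\le\, C\,\|\nabla\omega\|_{L^2(\R^2)}^{\sigma-2}\,
  \|\omega\|_{L^2(\R^2)}^2\,.
\end{equation*}
The pointwise inequality $|\nabla\omega| \le \rho^m|\nabla w| + m\rho^{m-1}|w|$ (using $|y| \le \rho$) bounds $\|\nabla\omega\|_{L^2}$ by $C(\|\nabla w\|_{L^2(m)} + \|w\|_{L^2(m)})$, and since $\sigma - 2 \in (0,1/2]$ the subadditivity $(a+b)^{\sigma-2} \le a^{\sigma-2} + b^{\sigma-2}$ splits the resulting expression into the two contributions in \eqref{lem2ineq}. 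This last subadditivity step is the main technical point: without exploiting $\sigma - 2 \le 1$, one would only obtain the weaker bound involving $(\|\nabla w\|_{L^2(m)} + \|w\|_{L^2(m)})^{\sigma-2}\|w\|_{L^2(m)}^2$, which does not match the form stated in \eqref{lem2ineq}.
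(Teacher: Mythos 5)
Your proof is correct and follows essentially the same route as the paper: the pointwise bound from \eqref{cNprop} with the Gaussian estimate \eqref{phibounds} for the $\phi^\sigma$ term, a weighted H\"older inequality for $n\ge 3$ (where $\sigma\le 2$), and the Gagliardo--Nirenberg interpolation applied to $\omega=\rho^m w$ for $n=2$, exactly as in the paper's use of \eqref{interpsigma} with $\sigma$ replaced by $\sigma-1$. Your separate treatment of the borderline case $\sigma=2$ and the explicit subadditivity step $(a+b)^{\sigma-2}\le a^{\sigma-2}+b^{\sigma-2}$ are only minor elaborations of details the paper leaves implicit.
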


\begin{proof}
In view of \eqref{cNprop} and \eqref{phibounds}, we have as before
\[
  \int_{\R^n} \rho^{m-n/2} \bigl|\cN(\tau,\alpha\phi{+}w)\bigr|\dd y
  \,\le\, C\,e^{-\eta\tau}\biggl(|\alpha|^\sigma + 
  \int_{\R^n} \rho^{m-n/2}|w|^\sigma \dd y\biggr)\,.
\]
If $n \ge 3$, then $\sigma \in (1,2]$ by \eqref{sigmarange}, and
a simple application of H\"older's inequality yields
\[
  \int_{\R^n} \rho^{m-n/2}|w|^\sigma \dd y \,\le\, 
  \|\rho^{-m(\sigma-1)-n/2}\|_{L^{2/(2-\sigma)}(\R^n)} \|w\|_{L^2(m)}^\sigma 
  \,\le\, C \|w\|_{L^2(m)}^\sigma\,.
  \]
We thus obtain \eqref{lem2ineq} with $\zeta_n = 0$, for any $w \in
L^2(m)$. If $n = 2$, then $\sigma > 2$ by \eqref{sigmarange}, and we
can control the term involving $|w|^\sigma$ as in the proof of
Lemma~\ref{Nlem1}. Setting $\omega = \rho^m w$ and using the
interpolation inequality \eqref{interpsigma} with $\sigma$ replaced by
$\sigma-1$, we arrive at \eqref{lem2ineq} with $\zeta_n = 1$.
\end{proof}

Our main goal is to prove that the quantities $|\alpha'(\tau)|$ and
$\|w(\cdot,\tau)\|_{L^2(m)}$ decay exponentially to zero as $\tau \to
+\infty$, if we assume a priori that $|\alpha(\tau)| +
\|w(\cdot,\tau)\|_{L^2(m)} \le 1$ for all $\tau \ge 0$. As we shall
see, that condition will be fulfilled if we take sufficiently small
initial data. Proceeding as in Section~\ref{sec4}, our strategy is to
use the differential inequalities satisfied by the energy functionals
$e_{m,\delta}(\tau)$ and $\cE_{m-2,\delta}(\tau)$ defined by
\eqref{emdef}, \eqref{cEdef}, respectively. In what follows, we fix
some $\delta \ge 1$ and we denote $\rho(y) = (\delta+|y|^2)^{1/2}$.

We first control the evolution of the scalar quantity $\alpha$.
The derivative $\alpha'(\tau)$ given by \eqref{alphadot} can
be estimated using Lemma~\ref{Nlem2}, if we disregard the factor
$\rho^{m-n/2} \ge 1$ in the left-hand side of \eqref{lem2ineq}. 
We thus find
\begin{equation}\label{alphadotest}
  |\alpha'(\tau)| \,\le\, C\,e^{-\eta\tau}\Bigl(|\alpha|^\sigma +
  \|w\|_{L^2(m)}^\sigma + \zeta_n \|\nabla w\|_{L^2(m)}^{\sigma-2}
  \,\|w\|_{L^2(m)}^2\Bigr)\,.
\end{equation}
Next, since the function $w$ satisfies \eqref{weqlin} with remainder
term $r_1 + r_2$, we obtain as in \eqref{Eem}\:
\begin{equation}\label{NLe1}
  \partial_\tau e_{m,\delta}(\tau) \,\le\, - 2\lambda_1 \cD_{m,\delta}(\tau)
  - \lambda e_{m,\delta}(\tau) + C_3\,e_{m-1,\delta}(\tau)
  + C_2 \cF_2(\tau) + \cF_3(\tau)\,,
\end{equation}
where $\lambda = m - n/2$, $C_3 = m\delta +C_1 m^2$, $\cD_{m,\delta}(\tau)$
is defined in \eqref{cDdef}, and $\cF_3(\tau) = \int \rho^{2m} w\,r_2
\dd y$. In view of \eqref{cF2}, we have $\cF_2(\tau) \le C_\epsilon\,\alpha^2
\,e^{-(\gamma-\epsilon)\tau}$ for any small $\epsilon > 0$, where
$\gamma = \min(\nu,m-1)$. Moreover, the definition \eqref{r2def}
of $r_2$ implies that
\[
  \cF_3(\tau) \,\le\, \int \rho^{2m}\,|w|\,\bigl|\cN(\tau,\alpha\phi + w)
  \bigr| \dd y \,+\, |\alpha'(\tau)| \int \rho^{2m}\,|w|\,\phi\dd y\,.
\]
The first term in the right-hand side is estimated using Lemma~\ref{Nlem1}
and Remark~\ref{Nrem1}, whereas for the second term we use \eqref{alphadotest},
the Cauchy-Schwarz inequality, and Young's inequality. We thus find
\begin{equation}\label{cF3}
  \cF_3(\tau) \,\le\, \epsilon \Bigl(\|w\|_{L^2(m)}^2 + \|\nabla w\|_{L^2(m)}^2
  \Bigr) + C_\epsilon\,e^{-2\eta\tau}\Bigl(|\alpha|^{2\sigma} +
  \|w\|_{L^2(m)}^{2\sigma} \Bigr)\,,
\end{equation}
where $\epsilon > 0$ is arbitrarily small. Replacing these estimates
into \eqref{NLe1}, we arrive at
\begin{equation}\label{NLe2}
\begin{split}
  \partial_\tau e_{m,\delta}(\tau) \,\le\, &- (2\lambda_1{-}\epsilon)
  \cD_{m,\delta}(\tau) - (\lambda{-}\epsilon) e_{m,\delta}(\tau) +
  C_3\,e_{m-1,\delta}(\tau) \\ &+ C_\epsilon\,\alpha^2
  \,e^{-(\gamma-\epsilon)\tau} + C_\epsilon\,e^{-2\eta\tau}\bigl(|\alpha|^{2\sigma} +
  e_{m,\delta}(\tau)^\sigma \bigr)\,,
\end{split}
\end{equation}
for some sufficiently small $\epsilon > 0$.

The second important quantity we want to control is the combined
energy functional \eqref{cEdef}, which involves both $w$ and the
antiderivative $W$. At this point, we have to assume as in
Proposition~\ref{propdecay} that $m \in (n/2,n/2+\beta)$ and
$m \le 2$. We also suppose that $\delta$ satisfies \eqref{lambdadef}.
Due to the additional nonlinear terms in the evolution equations for
$w$ and $W$, we obtain instead of \eqref{cE1}\:
\begin{equation}\label{NLe3}
  \partial_\tau \cE_{m,\delta}(\tau) \,\le\, -\lambda_1 \cD_{m,\delta}(\tau)
  - \lambda\cE_{m,\delta}(\tau) + \kappa\bigl(\cF_1(\tau) + \cF_4(\tau)\bigr)
  + C_2 \cF_2(\tau) + \cF_3(\tau)\,,
\end{equation}
where $\cF_1(\tau)$ satisfies \eqref{cF1} and $\cF_4(\tau) = \int
\rho^{2m-4} W R_2 \dd y = \int \rho^{2m-4} W K[r_2] \dd y$. To
estimate the new term, we first apply the Cauchy-Schwarz inequality,
and then Proposition~\ref{Kernelprop1} with $p = 1$ and $s = n/2$.
We thus obtain
\[
  |\cF_4(\tau)| \,\le\, C \|W\|_{L^2(m{-}2)}
  \int_{\R^n} \rho^{m-n/2}\Bigl|\cN \bigl(\tau,\alpha\phi
  + w\bigr) - \alpha'(\tau)\phi\Bigr| \dd y\,,
\]
where the integral in the right-hand side can be controlled using
Lemma~\ref{Nlem2} and estimate \eqref{alphadotest}. Using in
addition Young's inequality when $n = 2$ (in which case $\zeta_n = 1$),
we obtain
\begin{equation}\label{cF4}
  \cF_4(\tau) \,\le\, \epsilon\Bigl(\|W\|_{L^2(m{-}2)}^2 +
  \zeta_n \|\nabla w\|_{L^2(m)}^2\Bigr) + C_\epsilon\,e^{-2\eta\tau}\Bigl(
  |\alpha|^{2\sigma} + \|w\|_{L^2(m)}^{2\sigma} \Bigr)\,.
\end{equation}
If we bound $\cF_1(\tau)$ by \eqref{cF1}, $\cF_4(\tau)$ by \eqref{cF4},
and $\cF_2(\tau), \cF_3(\tau)$ as in \eqref{NLe2}, we can write
\eqref{NLe3} in the form
\begin{equation}\label{NLe4}
\begin{split}
  \partial_\tau \cE_{m,\delta}(\tau) \,\le\, &-(\lambda-\epsilon)
  \cE_{m,\delta}(\tau) + \Bigl(C_\epsilon'\,e^{-\gamma\tau} - \lambda_1\Bigr)
  \cD_{m,\delta}(\tau) \\ &+  C_\epsilon'' \,\alpha^2
  \,e^{-(\gamma-\epsilon) \tau} + C_\epsilon''\,e^{-2\eta\tau}\Bigl(
  |\alpha|^{2\sigma} + e_{m,\delta}(\tau)^\sigma \Bigr)\,,
\end{split}
\end{equation}
where $\epsilon > 0$ is small enough. Both inequalities \eqref{NLe2},
\eqref{NLe4} are valid as long as $\|w(\tau)\|_{L^2(m)} \le 1$,
and the constants $C_\epsilon, C_\epsilon', C_\epsilon''$ therein 
depend only on $\epsilon$ and on the properties of the matrix $A$.

\medskip\noindent{\bf End of the proof of Theorem~\ref{main2}}.  We
now show how to deduce the conclusion of Theorem~\ref{main2} from
estimates \eqref{alphadotest}, \eqref{NLe2}, and \eqref{NLe4}, assuming
for simplicity that either $n = 2$ or $n = 3$ and $\mu < 1/4$. The
arguments here are pretty standard, and we only indicate the main
steps. Throughout the proof, we assume that $v$ is the solution of
\eqref{veq} with initial data $v_0 \in L^2(m)$ satisfying
$\|v_0\|_{L^2(m)} \le \epsilon_0$, for some sufficiently small
$\epsilon_0 > 0$. We decompose this solution as $v(y,\tau) = \alpha(\tau) 
\phi(y) + w(y,\tau)$ where $\alpha(\tau)$ is defined by \eqref{alphadot}.

\smallskip\noindent{\bf Step 1.} ({\em Short-time estimate}) We claim
that there exist constants $C_9 > 1$ and $\theta > n/2$ such that
\begin{equation}\label{short1}
  \alpha(\tau)^2 + e_{m,\delta}(\tau) \,\le\, C_9\,e^{\theta\tau}
  \Bigl(\alpha(0)^2 + e_{m,\delta}(0)\Bigr)\,, \qquad \tau \ge 0\,,
\end{equation}
as long as the right-hand side is smaller than or equal to $1$.
To prove \eqref{short1}, we start from the differential inequality
\eqref{NLe2}, which is valid for any $m > n/2$. Using the rough
estimate $e_{m-1,\delta}(\tau) \le e_{m,\delta}(\tau)$ and assuming
that $\alpha(\tau)^2 + e_{m,\delta}(\tau) \le 1$, we deduce from
\eqref{NLe2} that 
\begin{equation}\label{short2}
  \partial_\tau e_{m,\delta}(\tau) \,\le\, -c\cD_{m,\delta}(\tau)
  \,+\, \theta e_{m,\delta}(\tau) + C\,e^{-2\mu \tau} \Bigl(\alpha(\tau)^2
  \,+\, e_{m,\delta}(\tau)\Bigr)\,,
\end{equation}
where $c = 2\lambda_1 - \epsilon$ and $\theta = C_3 - \lambda + \epsilon$.  
Under the same assumptions, it follows from \eqref{alphadotest} and 
Young's inequality that
\begin{equation}\label{short3}
  2 \alpha(\tau)\alpha'(\tau) \,\le\, \epsilon \Bigl(\alpha(\tau)^2 +
  \zeta_n D_{m,\delta}(\tau)\Bigr) + C_\epsilon\,e^{-2\eta\tau}
  \Bigl(\alpha(\tau)^2 \,+\, e_{m,\delta}(\tau)\Bigr)\,,
\end{equation}
where $\epsilon > 0$ is arbitrarily small. Combining \eqref{short2},
\eqref{short3} we obtain a differential inequality for the
quadratic quantity $\alpha(\tau)^2 + e_{m,\delta}(\tau)$, which can
be integrated to give \eqref{short1}. 

\medskip\noindent{\bf Step 2.} ({\em Exponential decay for large times})
We assume for the time being that $m \le 2$ and $m \in (n/2 + 2\mu,n/2+\beta)$,
so that estimate \eqref{NLe4} is valid. We take $\tau_1 > 0$ large
enough so that $C_\epsilon'\, e^{-\gamma\tau_1} \le \lambda_1/2$, where
$C_\epsilon'$ is as in \eqref{NLe4}, and we assume that $\epsilon_1^2
:= \alpha(\tau_1)^2 + e_{m,\delta}(\tau_1) \ll 1$. In view of
\eqref{short1}, this condition is fulfilled if the initial
data are sufficiently small. For $\tau \ge \tau_1$, as long as
the solution satisfies $\alpha(\tau)^2 + e_{m,\delta}(\tau) \le
M^2 \epsilon_1^2 \le 1$, for some fixed constant $M > 1$, we can
integrate the differential inequality \eqref{NLe4} to obtain
\begin{equation}\label{long1}
  \cE_{m,\delta}(\tau) + \frac{\lambda_1}{2}\int_{\tau_1}^\tau e^{-\lambda'(\tau-s)}
  \cD_{m,\delta}(s)\dd s \,\le\, e^{-\lambda'(\tau-\tau_1)}\,\cE_{m,\delta}(\tau_1)
  + C M^2 \epsilon_1^2 \,e^{-2\mu\tau}\,,
\end{equation}
where $\lambda' = \lambda-\epsilon$. Under the same assumptions, 
integrating \eqref{alphadotest}, we obtain for $\tau_1 \le \tau_2 \le \tau$\:
\begin{equation}\label{long2}
  \bigl|\alpha(\tau) - \alpha(\tau_2)\bigr| \,\le\, \int_{\tau_2}^\tau
  \bigl|\alpha'(s)\bigr|\dd s \,\le\, C M^\sigma \epsilon_1^\sigma
  \,e^{-\eta\tau_2}\,.
\end{equation}
Estimate \eqref{long2} is straightforward to obtain when $n \ge 3$, but
in the two-dimensional case we must use the integral term in the
right-hand side of \eqref{long1} to control the quantity involving
$\|\nabla w\|_{L^2(m)}$ in the expression \eqref{alphadotest} of
$\alpha'(\tau)$. In any case, it follows from \eqref{long1},
\eqref{long2} that
\begin{equation}\label{long3}
  \alpha(\tau)^2 + e_{m,\delta}(\tau) \,\le\, C_{10}\Bigl(
  \epsilon_1^2 + M^2 \epsilon_1^2 \,e^{-2\mu\tau_1}\Bigr)\,,
  \qquad \tau \ge \tau_1\,,
\end{equation}
as long as $\alpha(\tau)^2 + e_{m,\delta}(\tau) \le M^2 \epsilon_1^2$.
Here the constant $C_{10}$ does not depend on $M$ nor on $\tau_1$. 
Thus we can choose $M$ large enough so that $M^2 > 2 C_{10}$,
and then $\tau_1$ large enough so that $e^{2\mu\tau_1} \ge M^2$. Under
these assumptions, we deduce from \eqref{long3} that $\alpha(\tau)^2
+ e_{m,\delta}(\tau) \le M^2 \epsilon_1^2 \le 1$ for all $\tau \ge \tau_1$,
and this in turn implies that estimates \eqref{long1}, \eqref{long2}
hold for all $\tau \ge 0$. In particular, we have $e_{m,\delta}(\tau)
\le \cE_{m,\delta}(\tau) \le C \epsilon_1^2\,e^{-2\mu\tau}$, and there
exists $\alpha_* \in \R$ such that $|\alpha(\tau) - \alpha_*| \le
C \epsilon_1 \,e^{-\eta\tau}$ for all $\tau \ge \tau_1$. Together with
the short time estimate \eqref{short1}, this proves \eqref{mainconv2}
in the case where $m \in (n/2+2\mu,n/2+\beta)$ and $m \le 2$. 

The final step consists in proving the exponential decay for
large times in the general case where $m > n/2$. This can be
done using the previous result and a simple interpolation
argument as in the proof of Theorem~\ref{main1}. We omit the
details. \QED

\begin{rem}\label{finalrem}
It is possible to relax considerably our assumptions \eqref{Nprop}
on the nonlinearity $N$ and to strengthen our convergence result
\eqref{mainconv2} by using additional functionals that control
derivatives of the solution $v(y,\tau)$. In view of \eqref{em3},
it is natural to consider the functional
\begin{equation}\label{Dmdef}
  D_{m,\delta}(\tau) \,=\, \frac12 \int (\delta+|y|^2)^m \Langle\nabla
  w(y,\tau),A(ye^{\tau/2})\nabla w(y,\tau)\Rangle \dd y\,,
\end{equation}
which is equivalent to $\cD_{m,\delta}(\tau)$ in \eqref{cDdef}. However,
controlling the time evolution of $D_{m,\delta}(\tau)$ requires
the additional hypothesis that the matrix $A(x)$ in \eqref{diffeq}
satisfies $x \cdot \nabla A \in L^\infty(\R^n)$. Such an assumption
is quite natural in our problem, but is not required in
Theorems~\ref{main1} and \ref{main2}. 
\end{rem}


\section{Appendix}\label{sec6}

\subsection{A generalized Young inequality}
\label{ssec61}

In this section, following \cite{LPSW}, we give a short proof of
Proposition~\ref{propkernel}. 

\begin{lem}\label{kappalem}
Under the assumptions of Proposition~\ref{propkernel}, we define, 
for any $y \in \S^{n-1} \subset \R^n$, 
\begin{equation}\label{defkappa2}
  \kappa_2 \,=\, \int_{\R^n} |k(x,y)|^{n/d} \,|x|^{-n^2/(dp')}\dd x\,,
  \qquad \hbox{where}\quad \frac1p + \frac{1}{p'} \,=\, 1\,.
\end{equation}
Then $\kappa_2 = \kappa_1$, where $\kappa_1$ is given by \eqref{kprop3}. 
\end{lem}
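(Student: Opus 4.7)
The plan is to reduce both $\kappa_1$ and $\kappa_2$ to a common representation in polar coordinates, where they differ only by their radial exponents, and then to verify that these exponents coincide as a consequence of the hypothesis $1+1/q = 1/p + d/n$ in \eqref{kprop3} (equivalently, $1/q + 1/p' = d/n$). First, by the rotation invariance \eqref{kprop2}, neither $\kappa_1$ nor $\kappa_2$ depends on the choice of basepoint on $\S^{n-1}$, so I may fix both basepoints equal to $e_1$.

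The central tool is the auxiliary function $\Psi : (0,\infty) \times [-1,1] \to \R$ defined by $\Psi(r,t) = k(ru, e_1)$ for any $u \in \S^{n-1}$ with $u\cdot e_1 = t$. By \eqref{kprop2} applied with rotations in $SO(n)$ fixing $e_1$, this value is independent of the chosen representative $u$, so $\Psi$ is well defined, and $k(x, e_1) = \Psi(|x|, \hat x \cdot e_1)$ for every $x \ne 0$. Polar coordinates $x = r\omega$ then yield
\[
  \kappa_2 \,=\, \int_0^\infty \int_{\S^{n-1}} \Psi(r,\omega\cdot e_1)^{n/d}\,r^{n-1-n^2/(dp')}\,d\sigma(\omega)\,dr.
\]
To bring $\kappa_1$ into a compatible form, I apply homogeneity \eqref{kprop1} with scaling $\lambda = 1/|y|$ to get $k(e_1, y) = |y|^{-d}k(e_1/|y|, \hat y)$, and then invoke rotation invariance with $S \in SO(n)$ sending $\hat y$ to $e_1$. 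Since $Se_1$ is a unit vector with $Se_1 \cdot e_1 = e_1 \cdot S^{-1} e_1 = \hat y\cdot e_1$, this yields
\[
  k(e_1/|y|, \hat y) \,=\, k(Se_1/|y|, e_1) \,=\, \Psi(1/|y|, \hat y\cdot e_1).
\]
Polar coordinates $y = s\omega$ now give
\[
  \kappa_1 \,=\, \int_0^\infty \int_{\S^{n-1}} \Psi(1/s, \omega\cdot e_1)^{n/d}\,s^{-1-n^2/(dq)}\,d\sigma(\omega)\,ds.
\]

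The radial substitution $s \mapsto 1/s$ in $\kappa_1$ replaces the exponent $-1 - n^2/(dq)$ by $-1 + n^2/(dq)$, so $\kappa_1 = \kappa_2$ reduces to the algebraic identity $-1 + n^2/(dq) = n - 1 - n^2/(dp')$, equivalently $(n/d)(1/q + 1/p') = 1$. This is precisely $1/q + 1/p' = d/n$, which follows from \eqref{kprop3}. The step requiring the most care is the identification of the same auxiliary function $\Psi$ in both representations; that is handled above using only that, up to rotations fixing $e_1$, the value of $k(\cdot, e_1)$ at a point depends solely on its distance to the origin and on the cosine of its angle to $e_1$.
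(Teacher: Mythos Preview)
Your radial inversion and exponent check are exactly what the paper does, and for $n \ge 3$ your reduction via the auxiliary function $\Psi$ is sound: the stabilizer of $e_1$ in $SO(n)$ is $SO(n{-}1)$, which acts transitively on each latitude $\{u \in \S^{n-1} : u\cdot e_1 = t\}$, so $\Psi(r,t)$ is well defined and the rest goes through. There is, however, a gap when $n = 2$. In $SO(2)$ the stabilizer of $e_1$ is trivial, so rotation invariance \eqref{kprop2} does \emph{not} force $k(ru_+,e_1) = k(ru_-,e_1)$ for the two unit vectors $u_\pm$ with $u_\pm\cdot e_1 = t$; your $\Psi$ is not well defined from \eqref{kprop2} alone. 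Concretely, if $\omega = (\cos\varphi,\sin\varphi)$ and $S\omega = e_1$, then $Se_1 = (\cos\varphi,-\sin\varphi) = \bar\omega$, so after your manipulations the $\kappa_1$-integrand involves $|k(r\bar\omega,e_1)|^{n/d}$ while the $\kappa_2$-integrand involves $|k(r\omega,e_1)|^{n/d}$. The spherical integrals still agree, because $\omega \mapsto \bar\omega$ preserves $d\sigma$ on $\S^1$, but this step has to be argued and does not follow from the well-definedness of $\Psi$.

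The paper's proof avoids the issue by never attempting a pointwise reduction: it averages the definition of $\kappa_1$ over the basepoint $\sigma \in \S^{n-1}$, obtaining a double spherical integral $\frac{1}{s_n}\int_{\S^{n-1}}\int_{\S^{n-1}}\int_0^\infty(\cdots)$, then substitutes $\rho = 1/r$ and uses that $|k|^{n/d}$ is homogeneous of degree $-n$. The result is recognized directly as the average of $\kappa_2$ over $\theta \in \S^{n-1}$, which equals $\kappa_2$. No function of the cosine is needed, and the argument works uniformly in $n$.
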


\begin{proof}
Proceeding as in \cite[Lemma~1]{LPSW}, we write $x = r \sigma$ and
$y = \rho \theta$, where $r = |x|$, $\rho = |y|$, and $\sigma,
\theta \in \S^{n-1}$. By rotation invariance, the definition
\eqref{kprop3} does not depend on the choice of $x \equiv \sigma
\in\S^{n-1}$. Thus, averaging over $\sigma$, we obtain 
\[
  \kappa_1 \,=\, \frac{1}{s_n} \int_{\S^{n-1}} \int_{\S^{n-1}}
  \int_0^\infty |k(\sigma,\rho \theta)|^{n/d} \,\rho^{n-1-n^2/(dq)} \dd \rho
  \dd \theta \dd \sigma\,,
\]
where $s_n = 2\pi^{n/2}\,\Gamma(n/2)^{-1}$ is the measure of
$\S^{n-1}$. We perform the change of variable $\rho = 1/r$ in the
inner integral, and use the fact that $|k(x,y)|^{n/d}$ is homogeneous
of degree $-n$. This gives
\begin{equation}\label{aux0}
  \kappa_1 \,=\, \frac{1}{s_n} \int_{\S^{n-1}} \int_{\S^{n-1}}
  \int_0^\infty |k(r\sigma,\theta)|^{n/d} \,r^{n-1-n^2/(dp')} \dd \rho
  \dd \sigma \dd \theta\,,
\end{equation}
because $n^2/(dq) = n - n^2/(dp')$ in view of \eqref{kprop3}.
The right-hand side of \eqref{aux0} is the average over $\theta \in
\S^{n-1}$ of the quantity \eqref{defkappa2}, which does not depend
on the choice of $y \equiv \theta \in \S^{n-1}$. This yields the
desired equality $\kappa_1 = \kappa_2$. 
\end{proof}

\noindent{\bf Proof of Proposition~\ref{propkernel}.}
We assume for definiteness that $1 < p < q < \infty$, which is the
most interesting situation. The other cases, where some of the
inequalities above are not strict, can be established by similar
(or simpler) arguments. If $f \in L^p(\R^n)$ and $g = \cK[f]$, we have
\[
  |g(x)| \,\le\, \int_{\R^n} \Bigl(|k(x,y)|^a |y|^{-b}\Bigr)\,
  \Bigl(|k(x,y)|^{1-a} |y|^b |f(y)|^{p/q}\Bigr)\, |f(y)|^{1-p/q}\dd y\,,
  \quad x \in \R^n\,,
\]
where $a = n/(dp')$ and $b = n^2/(dqp')$. We apply to the right-hand
side the trilinear H\"older inequality with exponents $p'$, $q$, and
$r := pq/(q-p)$, which satisfy $1/p' + 1/q + 1/r = 1$. This gives
\begin{equation}\label{aux1}
  |g(x)|^q \,\le\, I(x) \left(\int_{\R^n} |k(x,y)|^{(1-a)q} |y|^{bq}
  |f(y)|^p \dd y\right)\,\|f\|_{L^p(\R^n)}^{q-p}\,,
\end{equation}
where
\[
  I(x)^{p'/q} \,=\, \int_{\R^n} |k(x,y)|^{ap'} |y|^{-bp'} \dd y \,=\,
  \int_{\R^n} |k(x,y)|^{n/d} |y|^{-n^2/(dq)} \dd y\,.
\]
Applying the change of variables $y = |x| z$ in the last integral
and using the assumption that the expression $|k(x,y)|^{n/d}$ is
homogeneous of degree $-n$, we obtain
\[
  I(x)^{p'/q} \,=\, |x|^{n - n^2/dq} \int_{\R^n} \bigl|k(x,|x|z)\bigr|^{n/d}
  \,|z|^{-n^2/(dq)} \dd z \,=\, \kappa_1 \,|x|^{-n^2/(dq)}\,.
\]
We now replace this expression into \eqref{aux1} and integrate
over $x \in \R^n$, using Fubini's theorem to exchange the
integrals. Since $(1-a)q = n/d$ and $bq = n^2/(dp')$, this
gives
\begin{equation}\label{aux2}
  \|g\|_{L^q(\R^n)}^q \,\le\, \kappa_1^{q/p'} \,\|f\|_{L^p(\R^n)}^{q-p}
  \int_{\R^n} J(y) |y|^{n^2/(dp')} |f(y)|^p \dd y\,,
\end{equation}
where
\[
  J(y) \,=\, \int_{\R^n} |k(x,y)|^{n/d} |x|^{-n^2/(dp')} \dd x \,.
\]
As for the computation of $I$, we use the homogeneity of $k$ and the change of variable $z=x/|y|$ to obtain
\[
J(y)\,=\, \int_{\R^n} \big|k(z,y/|y|)\big|^{n/d} |z|^{-n^2/(dp')} |y|^{-n^2/(dp')} \dd z  \,=\, \kappa_2 \,|y|^{-n^2/(dp')}\,.
\]
Using Lemma~\ref{kappalem}, we conclude that $\|g\|_{L^q(\R^n)} \le
\kappa_1^{1/p'} \kappa_2^{1/q} \|f\|_{L^p(\R^n)} = \kappa_1^{d/n}
\|f\|_{L^p(\R^n)}$.
\QED

\subsection{On the divergence of localized vector fields}\label{ssec62}

Let $\chi : \R^n \to [0,1]$ be any smooth, compactly supported
function such that $\chi(x) = 1$ for $|x| \le 1$ and $\chi(x) = 0$
for $|x| \ge 2$. Given any $k \in \N^*$, we denote $\chi_k(x) =
\chi(x/k)$. 

\begin{lem}\label{intlem}
Assume that $g \in L^p(\R^n)^n$ for some $p \in [1,\infty)$ such that
$(n-1)p < n$. Then we have $\langle \div g\,,\,\chi_k \rangle \to 0$ as
$k \to +\infty$. In particular, if $\div g \in L^1(\R^n)$, then $\int_{\R^n}
\div g \dd x = 0$. 
\end{lem}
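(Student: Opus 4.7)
The plan is to exploit the distributional identity $\langle \div g, \chi_k\rangle = -\int_{\R^n} g\cdot\nabla\chi_k\dd x$ and to show that the right-hand side tends to zero using H\"older's inequality together with the size and support of $\nabla\chi_k$.

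First I would note that $\nabla\chi_k(x) = k^{-1}(\nabla\chi)(x/k)$, so that $\|\nabla\chi_k\|_{L^\infty} \le C/k$ and $\mathrm{supp}\,\nabla\chi_k \subset A_k := \{x\in\R^n \,:\, k \le |x| \le 2k\}$, whose Lebesgue measure is bounded by $C k^n$. Consequently,
\[
  \bigl|\langle \div g, \chi_k\rangle\bigr| \,=\, \Bigl|\int_{A_k} g\cdot\nabla\chi_k\dd x\Bigr|
  \,\le\, \frac{C}{k}\int_{A_k}|g(x)|\dd x\,.
\]
Then I would apply H\"older's inequality on $A_k$ with conjugate exponents $p$ and $p'$ to bound the integral by $\|g\|_{L^p(A_k)}\,|A_k|^{1/p'} \le C\,k^{n/p'}\,\|g\|_{L^p(A_k)}$, which gives
\[
  \bigl|\langle \div g, \chi_k\rangle\bigr| \,\le\, C\,k^{n/p' - 1}\,\|g\|_{L^p(A_k)}
  \,=\, C\,k^{(n-1) - n/p}\,\|g\|_{L^p(A_k)}\,.
\]
The key point is now the assumption $(n-1)p < n$, which is equivalent to $(n-1) - n/p < 0$, so the prefactor $k^{(n-1)-n/p}$ tends to $0$ as $k\to\infty$. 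Since moreover $\|g\|_{L^p(A_k)} \le \|g\|_{L^p(\R^n)}$ is uniformly bounded (and in fact tends to zero by dominated convergence, though this is not needed), the first assertion follows.

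For the second assertion, if $\div g \in L^1(\R^n)$, then $\chi_k \to 1$ pointwise and $|\chi_k|\le 1$, so by dominated convergence $\langle \div g, \chi_k\rangle = \int_{\R^n} \chi_k\,\div g\dd x \to \int_{\R^n}\div g\dd x$. Combining this with the first part yields $\int_{\R^n}\div g\dd x = 0$. The only subtle point in the argument is verifying that the exponent matches up correctly so that the condition $(n-1)p<n$ indeed makes the power of $k$ negative; everything else is routine. I do not anticipate any real obstacle.
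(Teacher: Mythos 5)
Your proposal is correct and follows essentially the same route as the paper: the distributional identity $\langle \div g,\chi_k\rangle = -\int g\cdot\nabla\chi_k\dd x$, H\"older's inequality on the annulus $\{k\le|x|\le 2k\}$ giving the factor $k^{n(1-1/p)-1}$ (your exponent $(n-1)-n/p$ is the same quantity), which is negative precisely because $(n-1)p<n$, and dominated convergence for the last assertion. No gaps.
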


\begin{proof}
For any $k \ge 1$, we have
\begin{equation}\label{divg}
  \langle \div g\,,\,\chi_k \rangle \,=\, -\langle g\,,\,\nabla \chi_k
  \rangle \,=\, - \frac{1}{k}\int_{\R^n} g(x) \cdot \nabla\chi(x/k)
  \dd x\,.
\end{equation}
The integral in the right-hand side is easily estimated using H\"older's
inequality\:
\[
  \Bigl|\int_{\R^n} g(x) \cdot \nabla\chi(x/k)\dd x\Bigr| \,\le\,
  C \int_{|k| \le |x| \le 2|k|} |g(x)|\dd x \,\le\, C \|g\|_{L^p}
  (k^n)^{1-\frac1p}\,.
\]
Our assumption on $p$ ensures that $n(1-1/p) < 1$, hence the
last member of \eqref{divg} converges to zero as $k \to \infty$.
Finally, if $\div g \in L^1(\R^n)$, the first member of \eqref{divg}
converges to $\int_{\R^n}\div g\dd x$ by Lebesgue's dominated convergence
theorem. 
\end{proof}

\begin{lem}\label{divlem}
Let $n \ge 2$, $m \in (n/2,n/2+1)$, and assume that $f \in L^2(m)$
satisfies $\int_{\R^n}f \dd x = 0$. Then there exists $g \in L^2(m{-}1)^n$
such that $\div g = f$ and $\|g\|_{L^2(m-1)} \le C \|f\|_{L^2(m)}$.
\end{lem}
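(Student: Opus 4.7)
The plan is to construct $g$ as (minus) the gradient of the Newton potential of $f$. Let $G_0(x,y)$ denote the Green function of $-\Delta$ on $\R^n$, which corresponds to the constant coefficient case $A_\infty = \1$ of Section~\ref{sec2}; for this Green function, the H\"older exponent in \eqref{Gbd1} may be taken arbitrarily close to $1$. I would then define
\[
  u(x) \,=\, \int_{\R^n} G_0(x,y)\,f(y)\dd y \,=\, K[f](x)\,, \qquad g \,:=\, -\nabla u\,,
\]
so that formally $\div g = -\Delta u = f$; the task reduces to proving that $g \in L^2(m{-}1)^n$ with $\|g\|_{L^2(m-1)} \le C\|f\|_{L^2(m)}$.

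First, I would reduce to smooth, compactly supported, zero-mean functions by a mollify--truncate--correct procedure: given $f \in L^2_0(m)$, set $f_k = \chi_k(\eta_{1/k} * f) - a_k\,\psi$, where $\chi_k$ is a standard cutoff, $\eta_{1/k}$ is a mollifier, $\psi \in C^\infty_c(\R^n)$ is fixed with $\int \psi \dd x = 1$, and $a_k$ is chosen so that $\int f_k \dd x = 0$. Then $f_k \in C^\infty_c(\R^n) \cap L^2_0(m)$ and $f_k \to f$ in $L^2(m)$, so the general case follows from the smooth case by linearity.

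For $f \in C^\infty_c(\R^n)$ with $\int f \dd x = 0$, Proposition~\ref{Kernelprop1} applied with $A_\infty = \1$, $p = 2$, $s = 0$, and $\beta$ chosen so that $m < n/2+\beta < n/2+1$, yields the key bound $\|u\|_{L^2(m-2)} \le C\,\|f\|_{L^2(m)}$. Setting $\rho(x) = (1+|x|^2)^{1/2}$ and integrating the equation $-\Delta u = f$ against $\rho^{2(m-1)} u$, one integration by parts gives
\[
  \int_{\R^n} \rho^{2(m-1)}|\nabla u|^2 \dd x \,=\, \int_{\R^n} \rho^{2(m-1)} u f \dd x - 2(m-1)\int_{\R^n} \rho^{2(m-2)}\,u\,(x\cdot\nabla u)\dd x\,.
\]
Applying Cauchy--Schwarz to each right-hand term with the splittings $\rho^{2(m-1)} = \rho^{m-2}\rho^m$ and $\rho^{2(m-2)}|x| \le \rho^{m-2}\rho^{m-1}$ leads to the quadratic inequality
\[
  V^2 \,\le\, U F + 2(m-1)\,U V\,, \quad\text{where}\quad U = \|u\|_{L^2(m-2)}\,, \quad V = \|\nabla u\|_{L^2(m-1)}\,,\quad F = \|f\|_{L^2(m)}\,,
\]
which, combined with the Proposition~\ref{Kernelprop1} estimate $U \le CF$, yields $V \le CF$ and hence the required bound on $g = -\nabla u$.

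The main technical obstacle is the justification of the integration by parts, i.e.\ the vanishing of the boundary terms at infinity. Since $\int f\dd x = 0$ and $f$ is compactly supported, a multipole expansion of $u$ shows that the leading monopole contribution cancels, leaving the improved decay $|u(x)| \le C(1+|x|)^{1-n}$ and $|\nabla u(x)| \le C(1+|x|)^{-n}$ when $n \ge 3$, and $|u(x)| \le C(1+|x|)^{-1}$, $|\nabla u(x)| \le C(1+|x|)^{-2}$ when $n = 2$. In either case, the surface integral on $|x| = R$ decays like $R^{2m-n-2}$, which tends to zero under the hypothesis $m < n/2+1$. Alternatively, one can insert a smooth cutoff $\chi(x/R)$ inside the integrals, perform the integration by parts on the truncated expression, and pass to the limit $R \to \infty$, controlling the cross term generated by $\nabla \chi(x/R)$ via the $L^2(m{-}2)$ bound on $u$ and dominated convergence in the tail $\{|x|\ge R\}$.
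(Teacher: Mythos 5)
Your argument is correct, but it follows a genuinely different route from the paper. Both proofs start from the same ansatz $g = \pm\nabla u$ with $u$ the Newton potential of $f$, but the paper then works entirely at the level of the explicit kernel: using $\int f\dd x = 0$ it rewrites $g(x) = s_n^{-1}\int\bigl(\tfrac{x-y}{|x-y|^n} - \tfrac{x}{|x|^n}\bigr)f(y)\dd y$, proves the elementary pointwise bound $\bigl|\tfrac{x-y}{|x-y|^n} - \tfrac{x}{|x|^n}\bigr| \le C\,\tfrac{|y|}{|x|\,|x-y|}\bigl(|x-y|^{2-n} + |x|^{2-n}\bigr)$, and then applies Proposition~\ref{propkernel} with $d=n$, $p=q=2$ (plus the HLS inequality on $\{|x|\le 1\}$); the restriction $m \in (n/2,n/2+1)$ enters only through the integrability condition \eqref{kprop3} for that homogeneous kernel. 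You instead first control $\|u\|_{L^2(m-2)}$ by invoking Proposition~\ref{Kernelprop1} in the constant-coefficient case and then pass from $u$ to $\nabla u$ by a weighted Caccioppoli-type identity, absorbing the cross term via a quadratic inequality, with a mollify--truncate--correct density step to justify the integration by parts. This works, and it has the advantage of avoiding any pointwise estimate on the gradient kernel and of being robust (the same energy step would apply to a variable matrix $A_\infty$); the cost is that you need the approximation argument, the a priori finiteness of $\|\nabla u\|_{L^2(m-1)}$ and the vanishing of boundary terms (which you correctly reduce to the dipole decay available because $f$ has zero mean and $m<n/2+1$), and one small point you should make explicit: Proposition~\ref{Kernelprop1} is stated with $\beta$ the exponent furnished by \eqref{Gbd1}, so you must check that for the Laplacian \eqref{Gbd1} indeed holds for every $\beta<1$ (immediate from the mean value theorem in the regime $|x_1-x_2|<|x_1-y|/2$ and from \eqref{Gbounds} otherwise), so that every $m\in(n/2,n/2+1)$ is covered; there is no circularity in doing so, since the proof of Proposition~\ref{Kernelprop1} does not use this lemma. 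By contrast, the paper's proof is fully explicit and self-contained, needing no regularity input and no limiting procedure.
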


\begin{proof}
Under our assumptions on $f$, it is known that the elliptic equation
$\Delta u = f$ has a unique solution $u : \R^n \to \R$ that decays to
zero at infinity \cite{GT}. We take $g = \nabla u$. Using the explicit
form of the fundamental solution of the Laplace equation in $\R^n$,
we obtain the representations
\begin{equation}\label{ker1}
  g(x) \,=\, \frac{1}{s_n}\int_{\R^n}\frac{x-y}{|x-y|^n}\,f(y)\dd y
  \,=\, \frac{1}{s_n}\int_{\R^n}\biggl(\frac{x-y}{|x-y|^n}
  - \frac{x}{|x|^n}\biggr)f(y)\dd y\,,
\end{equation}
where $s_n$ is again the measure of the unit sphere $\S^{n-1}$. Since
$f \in L^2(\R^n)$, we can apply the Hardy-Littlewood-Sobolev
inequality to the first expression of $g$ in \eqref{ker1}, and we
deduce that $g \in L^p(\R^n)$ for $p = 2n/(n-2)$ when $n \ge 3$. If
$n \ge 2$, using the fact that $L^2(m) \hookrightarrow L^q(\R^2)$ for
$q \in (1,2)$, we obtain that $g \in L^p(\R^2)$ for $p \in
(2,\infty)$. In particular, we have in all cases
\begin{equation}\label{kerfirst}
  \int_{|x|\le1}|g(x)|^2 \dd x \,\le\, C \|f\|_{L^2(m)}^2\,.
\end{equation}

We next exploit the second expression of $g$ in \eqref{ker1}. We claim
that
\begin{equation}\label{ker2}
  \biggl|\frac{x-y}{|x-y|^n} - \frac{x}{|x|^n}\biggr| \,\le\,
  \frac{C|y|}{|x| |x-y|}\biggl(\frac{1}{|x-y|^{n-2}} + \frac{1}{|x|^{n-2}}
  \biggr)\,,
\end{equation}
for all $x,y \in \R^n$ with $x \neq 0$ and $x \neq y$. Equivalently,
\begin{equation}\label{ker3}
  \Bigl| |x|^n (x-y) - x\,|x-y|^n\Bigr| \,\le\, C |x|\,|y|\,|x-y|
  \,\Bigl(|x-y|^{n-2} + |x|^{n-2}\Bigr)\,,
\end{equation}
for all $x,y \in \R^n$. To establish \eqref{ker3}, we decompose
\begin{equation}\label{ker4}
  |x|^n (x{-}y) - x\,|x{-}y|^n \,=\, |x|^{n-1}\Bigl(|x|(x{-}y) - x\,|x{-}y|\Bigr)
  + x\,|x{-}y| \Bigl(|x|^{n-1} - |x{-}y|^{n-1}\Bigr)\,,
\end{equation}
and we use the following two elementary observations\:  

\smallskip\noindent
1. For any $x,z \in \R^n$ we have $\bigl||x|z - x|z|\bigr| \le
2 |z| |x-z|$. This can be proved by taking the square of both sides
and considering two cases according to whether $|x| \le 4 |z|$ or
$|x| \ge 4 |z|$.

\smallskip\noindent
2. For any $x,z \in \R^n$, we have
\[
  \Bigl| |x|^{n-1} - |z|^{n-1} \Bigr| \,\le\, \frac{n{-}1}{2}\,
  \bigl| |x| - |z| \bigr|\,\Bigl(|x|^{n-2} + |z|^{n-2}\Bigr)\,.
\]
Indeed the map $t \mapsto h(t) = (n{-}1)t^{n-2}$ is convex on $\R_+$, so
that for all $b \ge a \ge 0$ we have $\int_a^b h(t)\dd t \le (b-a)
(h(a)+h(b))/2$, which gives the result if $a = \min(|x|,|z|)$,
$b = \max(|x|,|z|)$. 

\smallskip\noindent
Applying these elementary estimates with $z = x-y$, we can bound both
terms in the right-hand side of \eqref{ker4}, and we arrive at
\eqref{ker3}. 

Now, in view of \eqref{ker1}, \eqref{ker2}, we have $|x|^{m-1} |g(x)|
\le C\int_{\R^n} k(x,y) |y|^m |f(y)|\dd y$, where 
\[
  k(x,y) \,=\, \frac{|x|^{m-2}}{|x-y| \,|y|^{m-1}}\biggl(\frac{1}{|x-y|^{n-2}}
  + \frac{1}{|x|^{n-2}}\biggr)\,.
\]
The kernel $k(x,y)$ is homogeneous of degree $-n$ and invariant under
rotations in $\R^n$. Moreover, if $|x| = 1$, the assumption that
$m \in (n/2,n/2 + 1)$ ensures that $\int_{\R^n}k(x,y) |y|^{-n/2} \dd y
< \infty$. Applying Proposition~\ref{propkernel} with $d = n$ and
$p = q = 2$, we deduce that
\[
  \int_{\R^n} |x|^{2m-2} |g(x)|^2 \dd x \,\le\, C
  \int_{\R^n} |y|^{2m} |f(y)|^2 \dd y\,,
\]
and combining this estimate with \eqref{kerfirst} we conclude that
$\|g\|_{L^2(m-1)} \le C \|f\|_{L^2(m)}$.
\end{proof}

\subsection{On the optimality of Proposition~\ref{Kernelprop1}}
\label{ssec63}

We show here using an explicit example that the assumption $m < n/2+\beta$
in Proposition~\ref{Kernelprop1} cannot be relaxed. Given $a,b > 0$, we
consider the functions $u,f : \R^n \to \R$ defined by
\begin{equation}\label{ufdef}
  u(x) \,=\, \frac{x_1}{(1+|x|^2)^a}\,, \qquad 
  f(x) \,=\, -\div\bigl(A_b(x)\nabla u(x)\bigr)\,,
\end{equation}
where $A_b$ is the Meyers-Serrin matrix \eqref{Abdef}. We have
\[
  \nabla u(x) \,=\, \frac{e_1}{(1+|x|^2)^a} - \frac{2ax_1x}{(1+|x|^2)^{a+1}}\,,
  \qquad \hbox{where} \quad e_1 \,=\, (1,0,\dots,0)\,,
\]
and since $A_b(x)x = x$ we find
\[
  A_b(x)\nabla u(x) \,=\, \frac{1}{(1+|x|^2)^a} \Bigl(b e_1 +
  (1-b) \frac{x_1 x}{|x|^2}\Bigr) - \frac{2ax_1x}{(1+|x|^2)^{a+1}}\,.
\]
Taking the divergence with respect to $x$, we arrive at
\[
  f(x) \,=\, -\frac{(1{-}b)(n{-}1)}{(1+|x|^2)^a}\,\frac{x_1}{|x|^2}
  + \frac{2a(n{+}2)x_1}{(1+|x|^2)^{a+1}} - \frac{4a(a{+}1)x_1|x|^2}{
  (1+|x|^2)^{a+2}}\,, \qquad x \in \R^n\,.
\]

For simplicity, we assume henceforth that $n \ge 3$, so that $f \in
L^2_\loc(\R^n)$. As $|x| \to \infty$, we have
\begin{equation}\label{fasym}
  f(x) \,=\, x_1 \Bigl(\frac{c}{|x|^{2a+2}} + \cO\Bigl(\frac{1}{|x|^{2a+4}}
  \Bigr)\Bigr)\,, \qquad \hbox{as} \quad |x| \to +\infty\,,
\end{equation}
where $c = -(1-b)(n-1) + 2an - 4a^2$. The idea is now to choose the
parameters $a,b$ so that $c = 0$, in order to maximize the decay of $f$.
For instance, we can take
\begin{equation}\label{achoice}
  a \,=\, \frac14 \Bigl(n + \sqrt{n^2 - 4(1{-}b)(n{-}1)}\Bigr) \,=\,
  \frac14 \Bigl(n + \sqrt{(n{-}2)^2 + 4b(n{-}1)}\Bigr)\,.
\end{equation}
With this choice, given $m > n/2$, it follows from \eqref{ufdef},
\eqref{fasym} that
\begin{equation}\label{ufcond}
  \begin{split}
  |x|^m f \in L^2(\R^n) \quad &\hbox{if and only if} \quad 2a > n/2 + m - 3\,, \\
  |x|^{m-2} u \in L^2(\R^n) \quad &\hbox{if and only if} \quad 2a > n/2 + m - 1\,.
  \end{split}
\end{equation}
Under the first condition in \eqref{ufcond}, we also have $\int_{\R^n} f(x)
\dd x = 0$ since $f$ is odd, hence $f \in L^2_0(m)$. 

According to \eqref{ufcond}, the pair $(u,f)$ violates inequality
\eqref{uweight} with $p = 2$, $s = 0$ provided $m > n/2$ and
\begin{equation}\label{acond}
  n/2 + m - 3 \,<\, 2a \,<\, n/2 + m - 1\,.
\end{equation}
For instance, if $n = 3$ and $m = 2$, we have $1/2 < 2a < 5/2$ by
\eqref{achoice} if $b > 0$ is sufficiently small, and it follows that
$f \in L^2(m)$, $\int_{\R^3} f(x)\dd x = 0$, and yet $u \notin
L^2(\R^3)$. The explanation is that the H\"older exponent $\beta$ in
Proposition~\ref{GHolder} tends to zero as $b \to 0$ in the case of
the Meyers-Serrin operator, see Remark~\ref{betarem}, and that the
value $m = 2$ is not allowed in Proposition~\ref{Kernelprop1}
if $n = 3$ and $\beta < 1/2$. More generally, if $n \ge 3$ and
$n/2 < m < n/2 + 1$, we can choose $b > 0$ small enough so that
inequalities \eqref{acond} hold, which implies the failure of estimate
\eqref{uweight} with $p = 2$, $s = 0$; but it follows from \eqref{achoice}
and \eqref{betaupper} that $2a \ge n-1 + \beta$, hence the second
inequality in \eqref{acond} implies that $m > n/2 + \beta$. This shows
that the assumption $m < n/2+\beta$ in Proposition~\ref{Kernelprop1} is
sharp in the case of the Meyers-Serrin matrix \eqref{Abdef}, at least
if the quantity $\beta$ is understood as given by the right-hand side
of \eqref{betaupper}. 


\vspace{0.5cm}\noindent
{\bf Thierry Gallay\:}\\
Institut Fourier, Universit\'e Grenoble Alpes et CNRS, 100 rue des Maths, 
38610 Gi\`eres, France\\
{\tt Thierry.Gallay@univ-grenoble-alpes.fr}

\vspace{0.3cm}\noindent
{\bf Romain Joly\:}\\
Institut Fourier, Universit\'e Grenoble Alpes et CNRS, 100 rue des Maths, 
38610 Gi\`eres, France\\
{\tt Romain.Joly@univ-grenoble-alpes.fr}

\vspace{0.3cm}\noindent
{\bf Genevi\`eve Raugel\:} ($\dagger$ May 10, 2019)\\
CNRS et Universit\'e Paris-Saclay, D\'epartement de Math\'ematiques, 
91405 Orsay, France

\end{document}